\newcommand{\be}{\begin{equation}}
\newcommand{\ee}{\end{equation}}
\newcommand{\bee}{\begin{equation*}}
\newcommand{\eee}{\end{equation*}}
\newcommand{\bea}{\begin{eqnarray}}
\newcommand{\eea}{\end{eqnarray}}
\newcommand{\beaa}{\begin{eqnarray*}}
\newcommand{\eeaa}{\end{eqnarray*}}
\newcommand{\sigmin}{\underline{\sigma}_{\min}}
\newcommand{\sigmamin}{\mu_A} %cf: I modified this
\newcommand{\muA}{\mu_A}
\newcommand{\LA}{L_A}
\newcommand{\muf}{\mu_f}
\newcommand{\muphi}{\mu_{\Phi}}
\newcommand{\defeq}{\mathrel{\mathop:}=}
\newcommand{\supp}{\operatorname{supp}}
\newcommand{\suboptsc}{\operatorname{SubOpt}_\mathsf{SC}}
\newcommand{\suboptc}{\operatorname{SubOpt}_\mathsf{C}}
\newcommand{\suboptnc}{\operatorname{SubOpt}_\mathsf{NC}}
\newcommand{\st}{\,\textrm{s.t.}\,}  
\newcommand{\T}{\mathrm{T}}
\newcommand{\RR}{\mathbb{R}}
\newtheorem{assumption}[theorem]{Assumption}
\newcounter{cnt}
\xdef \csname c\Alph{cnt}\endcsname {\noexpand\mathcal{\Alph{cnt}}}%
\xdef \csname b\Alph{cnt}\endcsname {\noexpand\mathbb{\Alph{cnt}}}%
\newcommand{\spa}{\mathrm{span}} 
\newcommand{\prox}{\mathbf{prox}} 
\newcommand{\proxhAx}{\prox_{\frac{1}{L_f}h(A\cdot-b)}}
\newcommand{\iprod}[2]{\left\langle{#1},{#2}\right\rangle}
\newcommand{\nrm}[1]{\left\|#1\right\|}
\newcommand{\dom}{\mathrm{dom}}
\newcommand{\abs}[1]{\left|#1\right|}
\DeclareMathOperator*{\argmin}{arg\,min}
\DeclareMathOperator*{\argmax}{arg\,max}
\newcommand{\paren}[1]{\left(#1\right)}
\newcommand{\brac}[1]{\left[#1\right]}
\newcommand{\set}[1]{\left\{#1\right\}}
\newcommand{\hx}{\hat{x}}
\newcommand{\xs}{x^\star}
\newcommand{\lams}{\lambda^\star}
\newcommand{\nrmA}{\nrm{A}}
\begin{document}

\title{On the Optimal Lower and Upper Complexity Bounds for a Class of Composite Optimization Problems}

\titlerunning{Optimal Complexity Bounds for a Class of Composite Optimization Problems}

\author{Zhenyuan Zhu \and Fan Chen \and Junyu Zhang \and Zaiwen Wen  
}

\institute{Zhenyuan Zhu \at
              Beijing International Center for Mathematical Research, Peking University, Beijing, China\\
              \email{zhenyuanzhu@pku.edu.cn} 
           \and
           Fan Chen \at 
              School of Mathematical Science, Peking University, Beijing, China\\
              \email{chern@pku.edu.cn}
          \and
          Corresponding Author: Junyu Zhang \at
          Department of Industrial Systems Engineering and Management, National University of Singapore, Singapore, Singapore\\
          \email{junyuz@nus.edu.sg}
          \and
          Zaiwen Wen \at 
              International Center for Mathematical Research, Center for Machine Learning Research and College of Engineering, Peking University, Beijing, China\\
              \email{wenzw@pku.edu.cn}
}

\date{}

\maketitle

\begin{abstract}
We study the optimal lower and upper complexity bounds for finding approximate solutions to the composite problem $\min_x\ f(x)+h(Ax-b)$, where $f$ is smooth and $h$ is convex. Given access to the proximal operator of $h$, for strongly convex, convex, and nonconvex $f$, 
we design efficient first order algorithms with complexities  $\tilde{O}\left(\kappa_A\sqrt{\kappa_f}\log\left(1/{\epsilon}\right)\right)$, $\tilde{O}\left(\kappa_A\sqrt{L_f}D/\sqrt{\epsilon}\right)$, and $\tilde{O}\left(\kappa_A L_f\Delta/\epsilon^2\right)$, respectively. Here, $\kappa_A$ is the condition number of the matrix $A$ in the composition, $L_f$ is the smoothness constant of $f$, and $\kappa_f$ is the condition number of $f$ in the strongly convex case. $D$ is the initial point distance and $\Delta$ is the initial function value gap. Tight lower complexity bounds for the three cases are also derived and they match the upper bounds up to logarithmic factors, thereby demonstrating the optimality of both the upper and lower bounds proposed in this paper.
\keywords{composite optimization \and first order method \and upper bound\and lower bound}
\subclass{90C25\and 90C26\and 90C46\and 90C60}
\end{abstract}

\section{Introduction}
In this paper, we consider the composite optimization problem:
\begin{equation}\label{comp-prob}
\min_x\ F(x)\defeq f(x)+h(Ax-b),
\end{equation}
where $f:\mathbb{R}^n\mapsto\mathbb{R}$ is a differentiable function whose gradient is $L_f$-Lipschitz continuous, $h:\mathbb{R}^n\mapsto\mathbb{R}$ is a proper closed convex function whose proximal operator can be efficiently evaluated, $A\in\RR^{m\times n}$ is a matrix with full row rank and its minimal singular value is $\sigmamin$. We denote the condition number of matrix $A$ as $\kappa_A\defeq\frac{\|A\|}{\sigmamin}$. 
Problem \eqref{comp-prob} appears in many practical applications, such as distributed optimization \cite{shi2015extra,chang2016asynchronous,hong2017prox}, 
signal and image processing \cite{yang2011alternating,zhu2008efficient,chambolle2011first}, network optimization \cite{feijer2010stability} and optimal control \cite{bertsekas2012dynamic}. In this paper, we aim to figure out the tight upper and lower complexity bounds for all three cases of strongly convex, convex, and non-convex function $f$. 

When $h(\cdot)$ is chosen to be the indicator function $h(\cdot)= \mathbb{I}_{\{0\}}(\cdot)$, the problem becomes the linear equality constrained problem
\begin{equation}\label{linear-equality}
\min_x f(x),\quad \st Ax=b.
\end{equation}
More generally, when $h(\cdot)$ is chosen to be $h(\cdot)= \mathbb{I}_{\mathcal{K}}(\cdot)$ where $\mathcal{K}$ is a proper cone, the problem becomes the conic inequality constrained problem
\begin{equation}\label{conic-constrained}
\min_x\ f(x), \quad \st Ax-b\in\mathcal{K}.
\end{equation}
Alternatively, when $h(\cdot)=\nrm{\cdot}_{*}$ is a norm function, the problem becomes the norm regularized problem
\begin{equation}\label{regularized-prob}
\min_x\ f(x)+ \|Ax-b\|_{*}.
\end{equation}

The motivation of this paper originates from the study of the complexity of the non-convex optimization with linear equality constraints \eqref{linear-equality}, which corresponds to the special case of \eqref{comp-prob} when $h(\cdot)=\mathbb{I}_{\{0\}}(\cdot)$ and $f(x)$ is non-convex. This problem has received considerable attention in the literature. The authors of \cite{kong2019complexity} combine the proximal point method and the quadratic penalty method, archiving an $O(\epsilon^{-3})$ complexity for finding an $\epsilon$-stationary point. Under the assumption that the domain is bounded, the authors of \cite{kong2023iteration} develop an algorithm that solves proximal augmented Lagrangian subproblem by accelerated composite gradient method, resulting in a complexity of $\tilde O(\epsilon^{-2.5})$. 

Several recent works have further improved the complexity to $O(\epsilon^{-2})$, including \cite{hong2017prox,sun2019distributed,zhang2020proximal,zhang2022global}. The key factor contributing to this improvement is the incorporation of the minimal nonzero singular value in the analysis of complexity bounds. Specifically, the condition number of $A$ is required to be bounded by some constant. In \cite{hong2017prox}, a proximal primal-dual algorithm is developed, while the dependence on $\kappa_A$ is very large. In \cite{sun2019distributed}, a novel optimal method for distributed optimization is proposed. Yet its dependence on Chebyshev acceleration prevents its generalization to problems with $h(\cdot)\neq \mathbb{I}_{\{0\}}(\cdot)$. The authors of \cite{zhang2020proximal} focus on the problem with an extra box constraint while it requires assuming the complementarity property, which is not easy to check in advance. The subsequent work \cite{zhang2022global} relaxes the assumption, but the dependence on other parameters remains unclear.

Therefore, it is natural to ask what are the complexity upper and lower bounds of first-order methods for the non-convex linearly constrained problem \eqref{linear-equality}. To answer this question, we consider the general form problem \eqref{comp-prob} and utilize the norm of the proximal gradient mapping $L_f\left\| x-\proxhAx\left(x-\frac{1}{2L_f}\nabla f(x)\right)\right\|$ as the convergence measure. When $h$ reduces to an indicator function $h(\cdot)=\mathbb{I}_{\{0\}}(\cdot)$ which corresponds to the linear equality constrained problem \eqref{linear-equality}, this error measure reduces to the norm of the projected gradient mapping $L_f\left\| x-\mathcal{P}_{\{Ax=b\}}\left(x-\frac{1}{2L_f}\nabla f(x)\right)\right\|$. We rigorously define the first-order linear span algorithm class and construct a hard instance to establish a lower bound of $\Omega\left(\kappa_A L_f\Delta/\epsilon^2\right)$ where $\Delta$ quantifies the gap in the objective function at the initial point. To demonstrate the tightness of the lower bound, we design an efficient algorithm based on the idea of accelerated proximal point algorithm (APPA), which achieves the bound of $\tilde{O}\left(\kappa_A L_f\Delta/\epsilon^2\right)$.

The APPA approach solves a non-convex problem by successively solving a sequence of strongly convex subproblems. Hence, the optimality of the complexity for solving problem \eqref{comp-prob} with strongly convex $f$ is also crucial. One straightforward approach is to introduce an additional variable $y=Ax-b$ and apply ADMM to solve the linear constrained problem where one function in the objective is strongly convex and the other is convex. For example, a sublinear convergence rate is obtained in \cite{cai2017convergence} and \cite{lin2015sublinear}. The lower bound is examined by constructing a challenging linear equality constrained problem in \cite{ouyang2021lower}. Assuming a strongly convex parameter $\mu_f$, the derived lower bound is $\Omega\left(\frac{\|A\|\|\lambda^\star\|}{\mu_f\sqrt{\epsilon}}\right)$ where $\lambda^\star$ is the optimal dual variable. Remarkably, this dominant term matches the upper bound provided in \cite{xu2021iteration} up to logarithmic factors. 

If we further incorporate the minimal nonzero singular value of $A$ into the complexity bound, it is possible to ``break'' the lower bound established in \cite{ouyang2021lower}. For example, the ADMM and a set of primal-dual methods are proved to have linear convergence rates \cite[etc.]{lin2015global,zhu2022unified}. Furthermore, for the linear equality constrained problem \eqref{linear-equality} with strongly convex $f$, the algorithms proposed in \cite{zhu2022unified} and \cite{salim2022optimal} achieve complexity of $O\left((\kappa_A^2+\kappa_f)\log(1/\epsilon)\right)$ and $O\left(\kappa_A\sqrt{\kappa_f}\log(1/\epsilon)\right)$, respectively. However, \cite{salim2022optimal} utilizes Chebyshev iteration as a sub-routine, and is hence restricted to the linear equality constrained problems. An $\Omega\left(\kappa_A\sqrt{\kappa_f}\log(1/\epsilon)\right)$ lower bound is also claimed for the linear equality constrained case in \cite{salim2022optimal}. 

For completeness, we also discuss the case with general convex $f$, which has been extensively investigated. The authors of \cite{ouyang2015accelerated} and \cite{xu2017accelerated} design algorithms that converge to an $\epsilon$-optimal solution with a complexity of $O(\epsilon^{-1})$. \cite{zhu2022unified} proposes a unified framework that covers several well-known primal-dual algorithms and establishes an ergodic convergence rate of $O(\epsilon^{-1})$. The lower bound is provided in \cite{ouyang2021lower} and the dominant term is $\Omega\left({\|A\|\|x^\star\|\|\lambda^\star\|}/{\epsilon}\right)$ which matches the upper bounds provided in \cite{ouyang2015accelerated}. Recent researches also establish complexity results better than $O(\epsilon^{-1})$ under stronger assumptions. On the linear constrained problem with $O(1)$ constraints, \cite{xu2022first} utilizes the cutting-plane method as the subroutine of ALM and proves a dimension-dependent complexity of $O(1/\sqrt{\epsilon})$. For a class of bilinear and smooth minimax problems (which include linear constrained problems), \cite{song2020breaking} proposes an algorithm that achieves $O\left(1/\sqrt{\epsilon}\right)$ complexity, but its dependence on other constants remains suboptimal.
Under our setting, we demonstrate that both lower and upper bounds can achieve $\tilde{\Theta}(\kappa_AD\sqrt{L/\epsilon})$ by deducing from the strongly convex case.\vspace{0.2cm}

\textbf{Contribution}. Our results are listed in Table \ref{table:ours} and our main contributions are summarized as follows. 
\begin{itemize}
\item Under the assumption of bounded $\kappa_A$, we establish the complexity lower bounds for solving \eqref{comp-prob} with non-convex, strongly convex, and convex $f$, within the first-order linear span algorithm class, by constructing three hard linear equality constrained instances.
\item By exploiting the idea of the (accelerated) proximal point algorithm, we design efficient algorithms to solve problem \eqref{comp-prob} in all three cases.
\item Under the assumption of bounded $\kappa_A$, we prove that the complexities of our algorithms match the lower bounds up to logarithmic factors. Therefore, these complexities are optimal.
\item For the special case of linear equality constrained problem \eqref{linear-equality}, we prove that the full row rank assumption can be removed. The upper bounds keep unchanged except that $\kappa_A$ is replaced by $\underline{\kappa}_A=\|A\|/\sigmin$, with $\sigmin$ being the minimum nonzero singular value.
\end{itemize}

\begin{table}[ht]
\centering
\begin{tabular}{c|c|c}
\hline\hline
Setting & Optimality Measure & Complexity\\\hline\hline
\multirow{2}{*}{Strongly Convex} & \multirow{2}{*}{$\|x-x^\star\|$} & $\tilde{O}\left(\kappa_A\sqrt{\kappa_f}\log\left(1/{\epsilon}\right)\right)$\\\cline{3-3}
&&\cellcolor{lightgray!60}{$\Omega\left(\kappa_A\sqrt{\kappa_f}\log\left(1/{\epsilon}\right)\right)$}\\\hline
\multirow{2}{*}{Convex} & \multirow{2}{*}{$f(x)+h_\rho(Ax-b)-\min_x F(x)$} & $\tilde{O}\left(\kappa_A\sqrt{L_f}D/\sqrt{\epsilon}\right)$\\\cline{3-3}
&&\cellcolor{lightgray!60}{$\Omega\left(\kappa_A\sqrt{L_f}D/\sqrt{\epsilon}\right)$}\\\hline
\multirow{2}{*}{Non-convex} & \multirow{2}{*}{$L_f\left\| x-\proxhAx\left(x-\frac{1}{2L_f}\nabla f(x)\right)\right\|$} & $\tilde{O}\left(\kappa_A L_f\Delta/\epsilon^2\right)$\\\cline{3-3}
&&\cellcolor{lightgray!60}{$\Omega\left(\kappa_A L_f\Delta/\epsilon^2\right)$}\\\hline\hline
\end{tabular}
\caption{Summary of our results. The gray cells represent the lower bounds. $x^\star$: the optimal solution, $x_0\in\dom F$: the initial point, $h_\rho(\cdot)$: a surrogate function of $h(\cdot)$ (defined in \eqref{def:hrho}), $\prox$: the proximal operator, $L_f$: Lipschitz constant of $\nabla f(x)$, $\mu_f$: strong convexity parameter of $f(x)$, $\kappa_f=L_f/\mu_f$, $\kappa_A=\|A\|/\sigmamin$, $F(x_0)-F(x^*)\leq \Delta$, $\|x_0-x^*\|\leq D$.}
\label{table:ours}
\end{table}

\textbf{Related works}. For this paper, there are several closely related works.  The first one that we would like to discuss is \cite{song2020breaking}. It is designed for a class of smooth bilinear minimax problems, which includes convex linearly constrained problems after proper reformulation. With some extra computation, the results of \cite{song2020breaking} indicate an $O\big(\kappa_A(D+D^*)\sqrt{L_f/\epsilon}\big)$ complexity, where $D^*$ upper bounds the norm of the optimal dual variables. The additional $O\big(\kappa_AD^*\sqrt{L_f/\epsilon}\big)$ term makes their algorithm suboptimal for our problem class. Second, for linear equality constrained problems, their results require the constraint matrix $A$ to be full rank, while our result does not require this property. In our paper, we provide a tight lower complexity bound for this problem class and an optimal first-order algorithm that achieves the lower bound.

The second closely related work is \cite{salim2022optimal}, which derived an $O\left(\kappa_A\sqrt{\kappa_f}\log(1/\epsilon)\right)$ complexity for the smooth strongly convex linear equality constrained problem \eqref{linear-equality}. Let $P(\cdot)$ be some properly selected Chebyshev polynomial, they proposed an accelerated algorithm with Chebyshev iteration based on the equivalence between $Ax=b$ and $\sqrt{P(A^\top A)}x=\sqrt{P(A^\top A)}x^*$. However, this technique is restricted to linear equality constrained problems as the aforementioned equivalence failed for inequality contraints $\big(h(\cdot)=\mathbb{I}_{\mathbb{R}^n_+}(\cdot)\big)$ and more general $h$. Moreover, their algorithm requires the smoothness of the objective function over the whole space, and they do not allow any constraints other than the linear equality ones. Hence their method cannot handle general non-smooth $h(\cdot)\neq\mathbb{I}_{\{0\}}(\cdot)$ by introducing $y=Ax-b$ and sloving an equality constrained problem. This is because the new $h(y)$ term violates both the global smoothness and joint strong convexity assumptions in their paper. It is worth mentioning that the $\Omega\left(\kappa_A\sqrt{\kappa_f}\log(1/\epsilon)\right)$ lower bound in \cite{salim2022optimal} is indeed a valid lower bound for our problem. However, since the construction of our lower bound for the general convex case requires the lower bound for the strongly convex case as an intermediate step, we provide a self-contained lower bound for the strongly convex case in this paper. The hard instance and the structure of the zero chain in our example are both different from those in \cite{salim2022optimal}.

The last related work that we would like to discuss is \cite{sun2019distributed} where an optimal first-order algorithm for non-convex distributed optimization is proposed. Like \cite{salim2022optimal}, their results require the global smoothness of the objective function and the Chebyshev acceleration and is hence hard to generalize to $h(\cdot)\neq\mathbb{I}_{\{0\}}(\cdot)$. A lower bound is also proposed in \cite{sun2019distributed} paper for non-convex distributed optimization. However, it is not clear how their lower bound can be reduced to our case and both their lower and upper bounds include an unusual dependence on the squared norm of the initial gradient. Therefore, instead of trying to reduce from their results, we construct a new hard instance that lower bounds the complexity for making the proximal gradient mapping small. 

\vspace{0.2cm}
\textbf{Organization.}~ 
Section \ref{sec:pre} provides fundamental information and defines the quantities to measure the suboptimality of an approximate solution. In Section \ref{sec:upper-bound}, we discuss the upper bounds for each of the three cases. For the sake of readability, we present an efficient algorithm for the strongly convex case first. Then, we utilize this algorithm to solve the proximal point subproblems of the non-convex case. We also utilize this algorithm to solve the general convex case by adding $O(\epsilon)$-strongly convex perturbation and obtain a corresponding upper bound. Thus, we follow the order from strongly convex to non-convex to convex in the discussion. Then in Section \ref{sec:lower-bound}, we present formal definitions of problem classes and algorithm classes and provide the corresponding lower bounds. Finally, in Section \ref{sec:linear-equality}, we conduct a detailed analysis of the linear equality constrained problem. In particular, we relax the full rank assumption on matrix $A$. A direct acceleration for the general convex case is also presented in the last section, instead of adding strongly convex perturbations.\vspace{0.2cm}

\textbf{Notations.} We denote the Euclidean inner product by $\langle \cdot,\cdot \rangle$ and the Euclidean norm by $\|\cdot\|$. 
Let $f(\cdot,\cdot):\RR^m\times \RR^n\mapsto \RR$ be a differentiable function of two variables. To denote the partial gradient of $f$ with respect to the first (or second) variable at the point $(x,y)$, we use $\nabla_x f(x,y)$ (or $\nabla_y f(x,y)$). The full gradient at $(x,y)$ is denoted as $\nabla f(x, y)$, where $\nabla f(x,y) = (\nabla_x f(x,y),\nabla_y f(x,y))$.
Suppose $\mathcal{M}$ is a closed convex set, we use $\mathcal{P}_{\mathcal{M}}(\cdot)$ to represent the projection operator onto $\mathcal{M}$. For any matrix $A\in\RR^{m\times n}$, we use $\mathcal{R}(A)$ to denote its range space. For any vector $x\in\RR^n$, we use $\supp\left\{x\right\}$ to denote the support of $x$, i.e., $\supp\left\{x\right\} \defeq \left\{i\in[n]\mid x_i \neq 0\right\}$, where $[n]=\{1,\cdots,n\}$. 
The indicator function for a set $\mathcal{S}$ is defined as $\mathbb{I}_{\mathcal{S}}(x)=0$ if $x\in\mathcal{S}$ and $\mathbb{I}_{\mathcal{S}}(x)=+\infty$ if $x\notin\mathcal{S}$. The identity matrix in $\RR^{d\times d}$ is denoted as $I_d$. The positive part of a real number is represented as $[\cdot]_+$, defined as $[x]_+=\max\{x,0\}$.

\section{Preliminaries}\label{sec:pre}

\subsection{Basic facts and definitions}

\noindent\textbf{Lipschitz Smoothness.} For a differentiable function $f(x)$, we say it is $L_f$-smooth if 
    \[
    \|\nabla f(x)-f(x^\prime)\|\leq L_f\|x-x^\prime\|, \quad \forall x,x^\prime.
    \] 

\noindent\textbf{Strong Convexity.}
    For a positive constant $\mu_f> 0$, we say $f(x)$ is $\mu_f$-strongly convex if $f(x)-\frac{\mu_f}{2}\|x\|^2$ is convex, and it is $\mu_f$-strongly concave if $-f(x)$ is strongly convex. \vspace{0.05cm}

\noindent\textbf{Conjugate Function.}
    The conjugate function of a function $f(\cdot)$ is defined as
    \[
    f^\star(y)=\sup_x\{x^\T y-f(x)\}.
    \] 
It is well-known that $f^\star$ is convex. Furthermore, when $f$ is assumed to be strongly convex or smooth, its conjugate function $f^\star$ has the following properties \cite{hiriart1996convex}.
\begin{lemma}\label{lemma:conjugate}
If $f$ is $\mu_f$-strongly convex, then its conjugate $f^\star$ is $\frac1{\mu_f}$-smooth. If $f$ is $L_f$-smooth, then its conjugate $f^\star$ is $\frac{1}{L_f}$-strongly convex.
\end{lemma}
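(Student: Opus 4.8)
The plan is to derive both statements from the inverse correspondence between subdifferentials under Fenchel conjugation, namely that for a closed proper convex $g$ one has $y\in\partial g(x)\iff x\in\partial g^\star(y)$, together with the standard monotone-operator characterizations of strong convexity and of convex smoothness. Throughout I treat $f$ as closed proper convex (strong convexity already entails convexity, and for the second statement convexity is the ambient assumption of the conjugate-duality setting), so that the biconjugation identity $(f^\star)^\star=f$ and the subdifferential inversion above are available.

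For the first claim, I would first argue that $f^\star$ is everywhere differentiable. Since $f$ is $\muf$-strongly convex, for each $y$ the map $x\mapsto \iprod{x}{y}-f(x)$ is $\muf$-strongly concave and coercive, so its supremum defining $f^\star(y)$ is attained at a unique maximizer $x^\star(y)$; a standard result then gives $\dom f^\star=\RR^n$ and $\nabla f^\star(y)=x^\star(y)$, with $y\in\partial f(x^\star(y))$. To bound the Lipschitz constant of $\nabla f^\star$, set $x_i=\nabla f^\star(y_i)$, so that $y_i\in\partial f(x_i)$. Strong convexity of $f$ yields the strong-monotonicity inequality $\iprod{y_1-y_2}{x_1-x_2}\ge \muf\sqr{x_1-x_2}$; combining it with the Cauchy--Schwarz bound $\iprod{y_1-y_2}{x_1-x_2}\le\nrm{y_1-y_2}\,\nrm{x_1-x_2}$ gives $\nrm{x_1-x_2}\le\frac1{\muf}\nrm{y_1-y_2}$, which is exactly $\frac1{\muf}$-Lipschitz continuity of $\nabla f^\star$, i.e. $\frac1{\muf}$-smoothness.

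For the second claim, I would invoke the Baillon--Haddad theorem: a convex $\Lf$-smooth $f$ has a $\frac1{\Lf}$-cocoercive gradient, $\iprod{\nabla f(x_1)-\nabla f(x_2)}{x_1-x_2}\ge\frac1{\Lf}\sqr{\nabla f(x_1)-\nabla f(x_2)}$. Take any $y_1,y_2$ and any $x_i\in\partial f^\star(y_i)$; by subdifferential inversion $y_i\in\partial f(x_i)$, and since $f$ is differentiable this forces $y_i=\nabla f(x_i)$. Substituting into cocoercivity gives $\iprod{y_1-y_2}{x_1-x_2}\ge\frac1{\Lf}\sqr{y_1-y_2}$, which is precisely the $\frac1{\Lf}$-strong monotonicity of $\partial f^\star$ and hence equivalent to $\frac1{\Lf}$-strong convexity of $f^\star$.

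I expect the main obstacle to be the regularity bookkeeping rather than the inequalities: establishing that $f^\star$ is finite-valued and differentiable with $\nabla f^\star$ equal to the argmax in the first part (so that the subgradients $y_i$ are genuinely attained at the maximizer), and invoking the cocoercivity (Baillon--Haddad) characterization with the correct constant in the second part. Once these structural facts are in place, both conclusions follow from a single application of monotonicity combined with Cauchy--Schwarz.
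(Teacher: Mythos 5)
Your proof is correct, but note that the paper does not actually prove Lemma~\ref{lemma:conjugate} at all: it is quoted as a known fact with a citation to Hiriart-Urruty and Lemar\'echal, so there is no internal argument to compare against. Your write-up is a valid self-contained substitute. The first half is clean and essentially minimal: attainment and uniqueness of the maximizer give differentiability of $f^\star$ with $\nabla f^\star(y)=x^\star(y)$ and $y\in\partial f(x^\star(y))$, and then $\frac{1}{\mu_f}$-Lipschitzness of $\nabla f^\star$ follows from strong monotonicity of $\partial f$ plus Cauchy--Schwarz. The second half is also correct, but it imports two nontrivial external results: the Baillon--Haddad theorem (cocoercivity of $\nabla f$) and the equivalence between $\frac{1}{L_f}$-strong monotonicity of $\partial f^\star$ and $\frac{1}{L_f}$-strong convexity of $f^\star$; the latter equivalence, in the direction you need (monotonicity modulus implies convexity modulus), is itself a textbook theorem rather than a one-line verification. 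A more economical route for that half is to conjugate the descent lemma directly: for any $y_0$ with $\partial f^\star(y_0)\neq\emptyset$ one has $y_0=\nabla f(x_0)$ for some $x_0\in\partial f^\star(y_0)$, and plugging the upper bound $f(x)\leq f(x_0)+\iprod{\nabla f(x_0)}{x-x_0}+\frac{L_f}{2}\sqr{x-x_0}$ into the supremum defining $f^\star(y)$ yields
\begin{equation*}
f^\star(y)\;\geq\; f^\star(y_0)+\iprod{x_0}{y-y_0}+\frac{1}{2L_f}\sqr{y-y_0}\qquad\text{for all }y,
\end{equation*}
which is exactly the strong-convexity subgradient inequality for $f^\star$ with modulus $\frac{1}{L_f}$, obtained without Baillon--Haddad and without the monotonicity characterization. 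Either way, your regularity bookkeeping (properness, closedness, and the subdifferential inversion $y\in\partial f(x)\iff x\in\partial f^\star(y)$) is handled correctly, so the proposal stands as a complete proof of the cited lemma.
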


\noindent\textbf{Proximal Operator.}
For a proper closed convex function $h$, the corresponding proximal operator is defined as
\[
\prox_{h}(x)\defeq\argmin_u \left\{h(u)+\frac12\|x-u\|^2\right\}.
\]

\subsection{Suboptimality measure}
To study the upper and lower complexity bounds of first-order methods for problem \eqref{comp-prob}, it is essential to define the appropriate suboptimality measures for the strongly convex, convex, and non-convex cases, respectively. In the following sections, we will present these measures one by one.\vspace{0.2cm}

\noindent\textbf{Strongly convex case.}\,\, In this case, the optimal solution is unique. We can define the suboptimal measure for a point $x$ as its squared distance to the optimal solution
\begin{equation}\label{def:SC-subopt}
\suboptsc(x) \defeq \|x-x^\star\|^2.
\end{equation}

\noindent\textbf{Non-convex case.}\,\, In the non-convex case, we define the suboptimal measure as
\[
    \suboptnc(x)\defeq L_f\left\| x-\proxhAx\left(x-\frac{1}{2L_f}\nabla f(x)\right)\right\|,
\]
which measures the violation of the first-order optimality condition. 
It is worth pointing out that the operator $\proxhAx$ is not needed in the algorithm. We only utilize it as a formal measure but do not need to directly evaluate it in our algorithms.\vspace{0.2cm}

\noindent\textbf{Convex case.}\,\, The convex case is a little bit complicated. The standard suboptimality measure $F(x)-\inf_{x'} F(x')$  may not fit our setting, since $h(Ax-b)=+\infty$ could happen when $Ax-b\not\in \dom h$. For example, for linearly constrained problems where $h(Ax-b)=\mathbb{I}_{\{Ax=b\}}(x)$, typically first-order methods only guarantee returning  solutions with small constraint violation instead of exact constraint satisfaction. Thus the objective function gap will always be $+\infty$. To handle this issue, we propose the following suboptimality measure
\[
\suboptc(x)\defeq f(x)+h_\rho(Ax-b)-\min_{x'}\left\{f(x')+h(Ax'-b)\right\},
\]
where $h(\cdot)$ is replaced by the surrogate function $h_\rho(\cdot)$ given by
\begin{equation}\label{def:hrho}
h_\rho(z)\defeq \sup_{\|y\|_2\leq \rho}\{z^\T y-h^\star(y)\}.
\end{equation}
For any $\rho\in(0,+\infty)$. $h_\rho(x)$ can be viewed as a Lipschitz approximation of $h(x)$, as the following lemma indicates.

\begin{lemma}
\label{lemma:hrho}
For any $\rho\in(0, +\infty)$, the followings hold
\begin{enumerate}
\item $h_\rho(x)\leq h(x)$, and $h_{\rho}(x) \to h(x)$ as $\rho\to+\infty$. \label{hrho1}
\item $h_\rho(x)$ is $\rho$-Lipschitz continuous, and if $h(x)$ is $\rho$-Lipschitz continuous, then $h_\rho(x)\equiv h(x)$.\label{hrho-Lip} \label{hrho2}
\item If $h(x)=\mathbb{I}_{\mathcal{K}}(x)$ where $\mathcal{K}$ is a proper cone, we have $h_\rho(x)= \rho\|\mathcal{P}_{\mathcal{K}^\circ}(x)\|$. \label{hrho-cone}
\end{enumerate}
\end{lemma}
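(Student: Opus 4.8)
The plan is to base all three parts on the Fenchel--Moreau biconjugate identity $h = h^{\star\star}$, which holds since $h$ is proper closed convex. The crucial observation is that the definition \eqref{def:hrho} is exactly the formula for the biconjugate $h^{\star\star}(z) = \sup_y\{z^\T y - h^\star(y)\}$ but with the supremum restricted to the ball $\{y : \nrm{y} \leq \rho\}$. Thus $h_\rho$ is a ``truncated biconjugate,'' and each claim follows from understanding how this truncation interacts with the structure of $h^\star$.

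For claim~\ref{hrho1}, restricting the feasible region of a supremum can only decrease it, which immediately gives $h_\rho(z) \leq h^{\star\star}(z) = h(z)$. For the limit, I would note that the balls $\{y : \nrm{y} \leq \rho\}$ increase to all of $\RR^m$ as $\rho \to +\infty$; hence for any $\delta > 0$ one can pick a near-maximizer $y_0$ of the unrestricted problem and take $\rho \geq \nrm{y_0}$ to force $h_\rho(z) \geq h(z) - \delta$ (with the usual modification when $h(z) = +\infty$), yielding monotone convergence. For claim~\ref{hrho2}, Lipschitz continuity follows because the objective $z \mapsto z^\T y - h^\star(y)$ is $\nrm{y}$-Lipschitz uniformly over the feasible set, where $\nrm{y} \leq \rho$; taking a maximizer $y^\star$ for one point and using it as a feasible (suboptimal) choice for the other gives $\abs{h_\rho(z) - h_\rho(z')} \leq \rho\nrm{z - z'}$ after symmetrizing. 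The converse uses the standard fact that a proper closed convex $h$ is $\rho$-Lipschitz if and only if $\dom h^\star \subseteq \{y : \nrm{y} \leq \rho\}$; when this holds the truncation removes nothing, so $h_\rho = h^{\star\star} = h$.

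The main technical step is claim~\ref{hrho-cone}. First I would compute the conjugate of an indicator of a cone: $h^\star(y) = \sup_{x \in \KK} x^\T y$ equals $0$ when $y \in \KKc$ and $+\infty$ otherwise, i.e.\ $h^\star = \mathbb{I}_{\KKc}$. Substituting into \eqref{def:hrho} reduces $h_\rho(z)$ to the program $\sup\{z^\T y : y \in \KKc,\ \nrm{y} \leq \rho\}$. To evaluate this I would invoke the Moreau decomposition for the cone pair $(\KK, \KKc)$, namely $z = \mathcal{P}_{\KK}(z) + \PKC{z}$ with $\iprod{\mathcal{P}_{\KK}(z)}{\PKC{z}} = 0$. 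For the upper bound, writing $z^\T y = \iprod{\mathcal{P}_{\KK}(z)}{y} + \iprod{\PKC{z}}{y}$, the first term is $\leq 0$ since $\mathcal{P}_{\KK}(z) \in \KK$ and $y \in \KKc$, while the second is $\leq \rho\nrm{\PKC{z}}$ by Cauchy--Schwarz. For the matching lower bound, the feasible point $y^\star = \rho\,\PKC{z}/\nrm{\PKC{z}}$ (and $y^\star = 0$ in the degenerate case $\PKC{z} = 0$) attains $z^\T y^\star = \rho\nrm{\PKC{z}}$, using orthogonality to drop the $\mathcal{P}_{\KK}(z)$ contribution. Hence $h_\rho(z) = \rho\nrm{\PKC{z}}$.

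I expect the only delicate points to be the boundary behaviour in claim~\ref{hrho1} when $h(z) = +\infty$ and the correct handling of the degenerate case $\PKC{z} = 0$ in claim~\ref{hrho-cone}; the remainder is a direct application of convex duality and the Moreau decomposition.
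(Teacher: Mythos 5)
Your proposal is correct and follows essentially the same route as the paper: part~\ref{hrho1} by restricting the supremum in the biconjugate, part~\ref{hrho2} by the uniform $\rho$-Lipschitzness of $y\mapsto z^\T y-h^\star(y)$ over the ball together with the subgradient/domain characterization of Lipschitz convex functions, and part~\ref{hrho-cone} by computing $h^\star=\mathbb{I}_{\KKc}$ and invoking the Moreau cone decomposition. Your treatment is in fact somewhat more careful than the paper's (which omits the $\rho\to+\infty$ limit argument and the degenerate case $\PKC{z}=0$), but the underlying ideas coincide.
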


\begin{proof}
Part \ref{hrho1}. 
$h_\rho(x)=\sup_{\|x\|\leq\rho}\{x^\T y-h^\star(x)\}\leq \sup_x \{x^\T y-h^\star(x)\}\leq h(x)$.

Part \ref{hrho2}. For any $x,x^\prime$, we have
\begin{align*}
    h_\rho(x)&=\sup_{\|y\|\leq\rho} \{y^\T(x-x^\prime)+y^\T x^\prime-h^\star(y)\}\\
    &\leq \sup_{\|y\|\leq\rho} y^\T(x-x^\prime) + \sup_{\|y\|\leq\rho}\{y^\T x^\prime-h^\star(y)\}\\
    &\leq \rho\|x-x^\prime\|+h_\rho(x^\prime),
\end{align*}
which implies that $h_\rho(x)$ is $\rho$-Lipschitz continuous.

For any $x$, let $y^\star(x)=\argmax_y \{x^\T y-h^\star(y)\}$, then we have $y^\star(x)\in\partial h(x)$. It holds $\|y^\star(x)\|\leq \rho$ if $h(x)$ is $\rho$-Lipschitz continuous. Hence, we have $h_\rho(x)\equiv h(x)$.

Part \ref{hrho-cone}.
When $h(x)=\mathbb{I}_{\mathcal{K}}(x)$, the conjugate function is $h^\star(y)=\mathbb{I}_{\mathcal{K}^\circ}(y)$ where $\mathcal{K}^\circ$ is the polar cone of $\mathcal{K}$. Therefore, we have
\[h_\rho(x)=\sup\limits_{\|y\|\leq\rho, y\in\mathcal{K}^\circ} x^\T y=x^\T \left(\frac{\rho}{\|\mathcal{P}_{\mathcal{K}^\circ}(x)\|} \mathcal{P}_{\mathcal{K}^\circ}(x)\right) =\rho\|\mathcal{P}_{\mathcal{K}^\circ}(x)\|.\]\qed
\end{proof}

\begin{remark}
For norm regularized problems \eqref{regularized-prob} where $h(x)=\|x\|_*$ is an arbitrary norm, let $\nrm{\cdot}^*$ be its dual norm, then the conjugate function $h^\star(y)=\mathbb{I}_{\nrm{y}^*\leq1}(y)$. As long as $\rho\nrm{x}^*\geq \nrm{x}, \forall x$, it holds that $$h_\rho(x)=\sup\limits_{\|y\|\leq\rho, \nrm{y}^*\leq1} x^\T y =\sup\limits_{\nrm{y}^*\leq1} x^\T y=\nrm{x}_*=h(x).$$
In particular, when $h(\cdot)=\|\cdot\|$, it is sufficient to take $\rho\geq 1$.
\end{remark}

\begin{remark}
For the linear inequality constrained problems \eqref{linear-equality} with $h(x)=\mathbb{I}_{\{x\leq0\}}$, Lemma \ref{lemma:hrho} indicates that $h_{\rho}(Ax-b)=\rho\nrm{[Ax-b]_+}.$ Therefore, we have
\begin{align*}
    \suboptc(x)=f(x)-f(x^\star)+\rho\nrm{[Ax-b]_+},
\end{align*}
Let $x^\star$ and $\lambda^\star$ be the optimal primal and dual solutions respectively, it is a standard lemma that (see e.g. \cite[Lemma 3]{zhu2022unified}) when $\rho\geq 2\nrm{\lambda^\star}$, we have 
\begin{align*}
    \max\set{ \abs{f(x)-f(x^\star)}, \rho\nrm{[Ax-b]_+} }\leq 2\suboptc(x).
\end{align*}
Similarly, for linear equality constrained problems with $h(x)=\mathbb{I}_{\{0\}}(x)$, we have 
$h_\rho(Ax-b)=\rho\|Ax-b\|$. Then $\max\set{ \abs{f(x)-f(x^\star)}, \rho\nrm{[Ax-b]} }\leq 2\suboptc(x)$, if $\rho\geq\|\lambda^\star\|$. Therefore, $\suboptc(x)$ essentially agrees with the widely used suboptimality measure of convex linear constrained problems, see e.g. \cite{xu2021first,zhu2022unified,hamedani2021primal}. Furthermore, we can also cover the problem formulation that with both equality and inequality constraints studied in \cite{zhang2022global,zhang2020proximal}, if we let $h(\cdot)=\mathbb{I}_{\{0\}\times\{x\leq0\}}(\cdot)$.
\end{remark}

\subsection{Nesterov’s accelerated gradient descent}
In this paper, we will frequently use Nesterov’s accelerated gradient descent (AGD) as a subroutine, as stated in Algorithm \ref{algo:AGD}. It is the optimal first-order algorithm for the smooth strongly convex optimization problems \cite{nesterov2018lectures}.  

\newcommand{\AGD}{\mathsf{AGD}}
\begin{algorithm2e}[H]\label{algo:AGD}
	\caption{$\AGD(\Psi,y_0,\delta)$}
	\textbf{Input:} 
        objective function $\Psi$, Lipschitz constant $L$, strong convexity parameter $\mu$, initial point $y_0$ and tolerance $\delta>0$.\vspace{0.05cm}\\
    \textbf{Initialize:} $\kappa\leftarrow \frac L\mu, \theta\leftarrow\frac{\sqrt \kappa-1}{\sqrt \kappa+1}, y_{-1}\leftarrow y_0, k\leftarrow 0$.\\
    \Repeat{$\nrm{\nabla\Psi(y_T)}\leq \mu\delta$}{
    $\tilde y_{k+1}\leftarrow y_k+\theta(y_k-y_{k-1})$.\\
    $y_{k+1}\leftarrow \tilde y_{k+1}-\frac1L\nabla \Psi(\tilde y_{k+1})$.\\
    $k\leftarrow k+1$.
    }
    \textbf{Output:} $y_T$.
\end{algorithm2e}

The following proposition is a simple corollary of \cite[Theorem 2.2.2]{nesterov2018lectures}.
\begin{proposition}\label{prop:AGD}
Assume that $\Psi$ is $L$-smooth and $\mu$-strongly convex. Denote $\kappa=\frac L\mu$ and $y^\star=\argmin_y\Psi(y)$. Algorithm \ref{algo:AGD} $\AGD(\Psi,y_0,\delta)$ terminates in
\begin{align*}
    T\leq 2\sqrt{\kappa}\log\paren{\frac{2\kappa\nrm{y_0-y^\star}}{\delta}}
\end{align*}
iterations, and the output $y_T$ satisfies $\nrm{y_T-y^\star}\leq \delta$.
\end{proposition}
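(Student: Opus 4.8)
The plan is to combine the textbook linear-convergence estimate for AGD with the standard translations between function-value gap, gradient norm, and distance to the minimizer that hold for smooth strongly convex functions. The stopping rule is phrased in terms of the gradient norm, whereas the cited convergence guarantee controls the function gap, so the main conceptual work is converting between the two.

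First I would invoke \cite[Theorem 2.2.2]{nesterov2018lectures}, which for the $\mu$-strongly convex constant scheme guarantees that the iterates of Algorithm \ref{algo:AGD} satisfy
\begin{equation*}
\Psi(y_k)-\Psi(y^\star)\leq \paren{1-\frac1{\sqrt\kappa}}^k\paren{\Psi(y_0)-\Psi(y^\star)+\frac\mu2\nrm{y_0-y^\star}^2}.
\end{equation*}
Since $\Psi$ is $L$-smooth and $\nabla\Psi(y^\star)=0$, the descent lemma gives $\Psi(y_0)-\Psi(y^\star)\leq\frac L2\nrm{y_0-y^\star}^2$, so the bracketed initial quantity is at most $L\nrm{y_0-y^\star}^2$.

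Next I would connect the stopping criterion $\nrm{\nabla\Psi(y_T)}\leq\mu\delta$ to the function gap. Applying the descent lemma to the point $y_k-\frac1L\nabla\Psi(y_k)$ and using $\Psi(y^\star)\leq\Psi\paren{y_k-\frac1L\nabla\Psi(y_k)}$ yields the standard inequality $\nrm{\nabla\Psi(y_k)}^2\leq 2L\paren{\Psi(y_k)-\Psi(y^\star)}$. Hence the stopping criterion is met as soon as $\Psi(y_k)-\Psi(y^\star)\leq\frac{\mu^2\delta^2}{2L}$, and combining this with the two previous displays it suffices to have
\begin{equation*}
\paren{1-\frac1{\sqrt\kappa}}^k\cdot L\nrm{y_0-y^\star}^2\leq\frac{\mu^2\delta^2}{2L}.
\end{equation*}
Taking logarithms and using $-\log\paren{1-\tfrac1{\sqrt\kappa}}\geq \tfrac1{\sqrt\kappa}$, this holds whenever $k\geq \sqrt\kappa\log\paren{2\kappa^2\nrm{y_0-y^\star}^2/\delta^2}=2\sqrt\kappa\log\paren{\sqrt2\,\kappa\nrm{y_0-y^\star}/\delta}$, which is dominated by the claimed bound $T\leq 2\sqrt\kappa\log\paren{2\kappa\nrm{y_0-y^\star}/\delta}$ since $\sqrt2<2$; this establishes the iteration count.

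Finally, for the output guarantee I would use strong convexity: monotonicity of $\nabla\Psi$ together with $\nabla\Psi(y^\star)=0$ gives $\mu\nrm{y_T-y^\star}^2\leq\iprod{\nabla\Psi(y_T)}{y_T-y^\star}\leq\nrm{\nabla\Psi(y_T)}\nrm{y_T-y^\star}$, so $\nrm{y_T-y^\star}\leq\frac1\mu\nrm{\nabla\Psi(y_T)}\leq\delta$ upon termination. The step I expect to be the main obstacle is not any single inequality but the bookkeeping of constants so that the derived count collapses exactly into the stated logarithmic form; the conceptual crux is the conversion from the gradient-norm stopping rule to the function-gap convergence rate, which becomes routine once the smoothness bound $\nrm{\nabla\Psi}^2\leq 2L(\Psi-\Psi(y^\star))$ and the strong-convexity bound above are in hand.
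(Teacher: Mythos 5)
Your proof is correct and follows exactly the route the paper intends: the paper gives no explicit proof, merely calling the proposition ``a simple corollary of [Theorem 2.2.2, Nesterov]'', and your argument (linear decay of the function gap from that theorem, the smoothness bounds $\Psi(y_0)-\Psi(y^\star)\leq\frac L2\|y_0-y^\star\|^2$ and $\|\nabla\Psi\|^2\leq2L(\Psi-\Psi(y^\star))$ to link the gap to the gradient-norm stopping rule, and strong convexity to convert $\|\nabla\Psi(y_T)\|\leq\mu\delta$ into $\|y_T-y^\star\|\leq\delta$) is precisely the standard bookkeeping that citation leaves implicit. The constants work out with room to spare ($\sqrt2<2$), so nothing further is needed.
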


\section{Upper bounds}\label{sec:upper-bound}

\subsection{Strongly convex case: Intuition}\label{sec:SC-ub}
Though $\prox_h(x)$ is easy to evaluate, $h(Ax-b)$ may not have efficient proximal mapping. Therefore, by utilizing the conjugate function of $h$, we decouple the composite structure by reformulating \eqref{comp-prob} as a convex-concave saddle point problem 
\begin{equation}\label{eqn:SC-saddle}
\min_x\max_\lambda \ \mathcal{L}(x,\lambda)\defeq f(x)+\lambda^\T (Ax-b)-h^\star(\lambda).
\end{equation}
Switching the order of $\min$ and $\max$, and minimizing with respect to $x$, we obtain the dual problem of \eqref{comp-prob}:
\begin{equation}\label{eqn:comp-dual}
\max_\lambda\ \Phi(\lambda) \defeq -f^\star(-A^\T \lambda)-b^\T\lambda-h^\star(\lambda).
\end{equation}
The following lemma illustrates that the dual problem is strongly concave.

\begin{lemma}\label{lemma:Phi-SC}
    $\Phi(\lambda)$ is $\muphi$-strongly concave with $\muphi\defeq \sigmamin^2/L_f$.
\end{lemma}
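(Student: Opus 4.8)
The plan is to prove the equivalent statement that $-\Phi$ is $\muphi$-strongly convex, by treating the three terms $-f^\star(-A^\T\lambda)$, $-b^\T\lambda$, and $-h^\star(\lambda)$ separately. The two trailing terms are harmless: $-b^\T\lambda$ is affine, and $-h^\star(\lambda)$ is concave because a conjugate function is always convex. Since adding an affine function and a concave function to a strongly concave function preserves its strong-concavity modulus, all the content lies in showing that $-f^\star(-A^\T\lambda)$ is $\muphi$-strongly concave, equivalently that $g(\lambda)\defeq f^\star(-A^\T\lambda)$ is $\muphi$-strongly convex.

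First I would invoke Lemma \ref{lemma:conjugate}: since $f$ is $L_f$-smooth, its conjugate $f^\star$ is $\frac{1}{L_f}$-strongly convex, so that $\phi(z)\defeq f^\star(z)-\frac{1}{2L_f}\|z\|^2$ is convex. The key step is to push this strong convexity through the linear map $\lambda\mapsto -A^\T\lambda$. To this end I would write
\[
g(\lambda)-\frac{\sigmamin^2}{2L_f}\|\lambda\|^2 = \phi(-A^\T\lambda)+\frac{1}{2L_f}\Big(\|A^\T\lambda\|^2-\sigmamin^2\|\lambda\|^2\Big).
\]
The first summand is convex, being the composition of the convex function $\phi$ with a linear map. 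The second summand equals $\frac{1}{2L_f}\lambda^\T\big(AA^\T-\sigmamin^2 I\big)\lambda$, and here the full-row-rank assumption on $A$ enters decisively: it guarantees that $AA^\T$ is positive definite with smallest eigenvalue exactly $\sigmamin^2$, so $AA^\T-\sigmamin^2 I\succeq 0$ and the quadratic is convex. Summing, $g(\lambda)-\frac{\sigmamin^2}{2L_f}\|\lambda\|^2$ is convex, which is precisely the assertion that $g$ is $\frac{\sigmamin^2}{L_f}$-strongly convex.

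Combining the three pieces, $-\Phi(\lambda)=g(\lambda)+b^\T\lambda+h^\star(\lambda)$ is the sum of a $\muphi$-strongly convex function and two convex functions, hence $\muphi$-strongly convex, giving the claim with $\muphi=\sigmamin^2/L_f$.

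I do not anticipate a genuine obstacle. The one subtle point is that $f^\star$ need not be differentiable (we assume $f$ smooth, not strongly convex), so I would deliberately phrase the argument through the convexity of $\phi(z)=f^\star(z)-\frac{1}{2L_f}\|z\|^2$ rather than through any gradient or Hessian computation; this also makes the argument insensitive to the effective domains of $f^\star$ and $h^\star$, since the strong-convexity modulus is preserved on the domain automatically.
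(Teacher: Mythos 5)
Your proof is correct and follows the same skeleton as the paper's: both isolate $f^\star(-A^\T\lambda)$ as the term carrying the strong concavity, invoke Lemma~\ref{lemma:conjugate} to get that $f^\star$ is $\frac{1}{L_f}$-strongly convex, transfer this through the linear map using the minimal singular value of $A$, and absorb $b^\T\lambda+h^\star(\lambda)$ as a harmless convex remainder. The one genuine difference is the mechanism of the transfer step. The paper writes the first-order strong-convexity inequality $f^\star(z')-f^\star(z)\geq\langle\nabla f^\star(z),z'-z\rangle+\frac{1}{2L_f}\|z-z'\|^2$ and bounds $\|A^\T(\lambda-\lambda')\|\geq\sigmamin\|\lambda-\lambda'\|$; this uses $\nabla f^\star$, whose existence the paper secures by also invoking the $\mu_f$-strong convexity of $f$ (so that $f^\star$ is $\frac{1}{\mu_f}$-smooth). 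You instead work directly with the paper's definition of strong convexity, showing $f^\star(-A^\T\lambda)-\frac{\sigmamin^2}{2L_f}\|\lambda\|^2$ is the sum of the convex function $\phi(-A^\T\lambda)$ (with $\phi(z)=f^\star(z)-\frac{1}{2L_f}\|z\|^2$) and the convex quadratic $\frac{1}{2L_f}\lambda^\T(AA^\T-\sigmamin^2 I)\lambda$, where $AA^\T\succeq\sigmamin^2 I$ by the full-row-rank assumption. This buys you independence from the differentiability of $f^\star$, so your argument works verbatim when $f$ is merely convex rather than strongly convex --- a mild but real gain in generality, and arguably a cleaner match to the definitions the paper actually states. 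Both proofs are sound.
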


\begin{proof}
Note that $f(x)$ is $L_f$-smooth and $\mu_f$-strongly convex, Lemma \ref{lemma:conjugate} indicates that $f^\star(\cdot)$ is $\frac 1{\mu_f}$-smooth and $\frac 1{L_f}$-strongly convex. Denote $\tilde f(\lambda)\defeq f^\star(-A^\T\lambda)$, then for any $\lambda,\lambda^\prime\in\mathbb{R}^m$, we have 
\begin{align*} 
    \tilde f(\lambda^\prime)-\tilde f(\lambda)
    & \geq \langle \nabla f^\star(-A^\T \lambda), -A^\T \lambda^\prime+A^\T \lambda\rangle +\frac 1{2L_f} \|A^\T \lambda-A^\T \lambda^\prime\|^2\\
    & = \langle \nabla \tilde f(\lambda), \lambda^\prime-\lambda\rangle+\frac 1{2L_f} \|A^\T \lambda-A^\T \lambda^\prime\|^2\\
    & \geq \langle \nabla \tilde f(\lambda), \lambda^\prime-\lambda\rangle+\frac{\sigmamin^2}{2L_f}\|\lambda-\lambda^\prime\|^2,
\end{align*}
which implies that $\tilde f(\lambda)$ is $(\sigmamin^2/L_f)$-strongly convex. Combining the fact that $b^\T\lambda+h^\star(\lambda)$ is convex, we complete the proof. \qed
\end{proof}

One can observe that the Lipschitz continuity of $\nabla f(x)$ is transferred to the strong concavity of $\Phi(\lambda)$ through the matrix $A$. Therefore, a linear convergence can be expected. To exploit this observation, we perform an inexact proximal point algorithm to solve the dual problem \eqref{eqn:comp-dual}: 
\begin{equation}\label{eqn:SC-loop1}
\lambda_k \approx \argmax_\lambda\ \Phi_k(\lambda)\defeq \Phi(\lambda)-\frac{\ell}{2}\|\lambda-\lambda_{k-1}\|^2,
\end{equation}
where $\lambda_{k-1}$ is the iterate of the $(k-1)$-th step. Now it remains to solve the subproblem \eqref{eqn:SC-loop1} efficiently. By expanding the term $f^\star(-A^\T \lambda)$ through the conjugate function again, we can rewrite \eqref{eqn:SC-loop1} into the equivalent saddle point problem
\begin{equation}\label{eqn:SC-minimaxk}
\max_\lambda\min_x\ {\mathcal{L}}_k(x,\lambda)\defeq f(x)+\lambda^\T (Ax-b)-h^\star(\lambda)-\frac{\ell}{2} \|\lambda-\lambda_{k-1}\|^2.
\end{equation}
Let $g_k(\lambda)\defeq h^\star(\lambda)+\frac{\ell}{2}\|\lambda-\lambda_{k-1}\|^2$. Then the dual problem of \eqref{eqn:SC-loop1} is given by

\begin{equation}\label{eqn:SC-dual}
\min_x\ \Psi_k(x)\defeq f(x)+g_k^\star(Ax-b).
\end{equation}

The function $\Psi_k(\cdot)$ is $L_{\Psi}$-smooth and $\muf$-strongly convex, with $L_{\Psi}=L_f+\frac{\LA^2}{\ell}$. This time, the strong convexity induced by the proximal term for the dual variable $\lambda$ is transferred to the Lipschitz smoothness in the primal variable $x$.
With a few more computations, we show that $\nabla \Psi_k$ can be easily evaluated, and hence AGD can be applied to solve \eqref{eqn:SC-dual}.

\begin{proposition}
The gradient $\nabla \Psi_k(\cdot)$ can be evaluated with one call of $A$, one call of $A^\T$, one call of $\prox_{\ell h}(\cdot)$, and one call of $\nabla f(\cdot)$, respectively.
\end{proposition}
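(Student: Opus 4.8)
The plan is to differentiate $\Psi_k$ via the chain rule and then collapse the gradient of the conjugate $g_k^\star$ into a single evaluation of $\prox_{\ell h}$ using the Moreau decomposition.

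First I would note that $g_k(\lambda)=h^\star(\lambda)+\frac{\ell}{2}\nrm{\lambda-\lambda_{k-1}}^2$ is $\ell$-strongly convex, so by Lemma \ref{lemma:conjugate} its conjugate $g_k^\star$ is differentiable, and the standard conjugate-gradient identity gives $\nabla g_k^\star(z)=\argmax_\lambda\{\iprod{z}{\lambda}-g_k(\lambda)\}$. The chain rule then yields
\[
\nabla\Psi_k(x)=\nabla f(x)+\At\,\nabla g_k^\star(Ax-b),
\]
which accounts for one evaluation of $\nabla f$ and, once $\nabla g_k^\star(Ax-b)$ is available, one multiplication by $\At$.

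Next I would make $\nabla g_k^\star$ explicit. Writing out the argmax and completing the square in $\lambda$ merges the proximal penalty and the linear term into one quadratic, so that
\[
\nabla g_k^\star(z)=\argmin_\lambda\Big\{\tfrac1\ell h^\star(\lambda)+\tfrac12\nrm{\lambda-w}^2\Big\}=\prox_{\frac1\ell h^\star}(w),\qquad w\defeq\lambda_{k-1}+\tfrac{z}{\ell}.
\]
The crux of the argument is to eliminate $\prox_{h^\star}$, which the algorithm is not assumed to access, in favor of $\prox_{\ell h}$, which it is. Applying the extended Moreau decomposition $\prox_{\ell h}(\ell w)+\ell\,\prox_{\frac1\ell h^\star}(w)=\ell w$ gives
\[
\nabla g_k^\star(z)=w-\tfrac1\ell\prox_{\ell h}(\ell w)=\Big(\lambda_{k-1}+\tfrac{z}{\ell}\Big)-\tfrac1\ell\prox_{\ell h}\big(\ell\lambda_{k-1}+z\big).
\]

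Finally I would substitute $z=Ax-b$ and count: the product $Ax$ is formed once (one call of $A$) and reused in both $z$ and the argument $\ell\lambda_{k-1}+Ax-b$ of the proximal operator, which is invoked once (one call of $\prox_{\ell h}$); together with the single $\nabla f(x)$ and the single multiplication by $\At$ from the chain rule, this gives exactly the claimed cost. The only genuinely nontrivial step is the Moreau reduction from $\prox_{h^\star}$ to $\prox_{\ell h}$; the remaining work — completing the square and checking that $Ax$ need be computed only once — is routine bookkeeping.
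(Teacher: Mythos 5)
Your proposal is correct and follows essentially the same route as the paper: the paper invokes Danskin's theorem on the saddle function $\mathcal{L}_k$, which is precisely your conjugate-gradient identity $\nabla g_k^\star(z)=\argmax_\lambda\{\iprod{z}{\lambda}-g_k(\lambda)\}$, and both arguments then complete the square and apply the Moreau decomposition to trade $\prox_{h^\star/\ell}$ for $\prox_{\ell h}$. Your final formula $\nabla g_k^\star(z)=w-\tfrac1\ell\prox_{\ell h}(\ell w)$ even carries the correct sign, whereas the last line of the paper's display \eqref{eqn:lambdastar} appears to have the two terms of the Moreau identity transposed.
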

\begin{proof}
With $\lambda_k^\star(x)\defeq \argmax_\lambda \mathcal{L}_k(x,\lambda)$, Danskin's theorem indicates that
\[
    \nabla \Psi_k(x)=\nabla_x \mathcal{L}_k(x,\lambda_k^\star(x))=\nabla f(x)+A^\T\lambda_k^\star(x).
\]
In fact, $\lambda_k^\star(x)$ can be explicitly written as 
\begin{equation}
\label{eqn:lambdastar}
    \begin{split} \lambda_k^\star(x)&=\argmax_\lambda\ \left\{-h^\star(\lambda)-{\frac{\ell}{2}} \left\|\lambda-\lambda_{k-1}-\frac{Ax-b}{\ell}\right\|^2\right\}\\
    &=\prox_{\frac{h^\star}{\ell}}\left(\lambda_{k-1}+\frac{Ax-b}{\ell}\right)\\
    &=\frac1{\ell}\prox_{{\ell} h}\left({\ell}\lambda_{k-1}+Ax-b\right)-\lambda_{k-1}-\frac{Ax-b}{\ell},
    \end{split}
\end{equation}
where the last equality comes from Moreau's decomposition theorem \cite[Proposition IV.1.8]{showalter1997monotone}. \qed
\end{proof}

Therefore, $\nabla \Psi_k(x)$ can be easily evaluated and we can apply Algorithm \ref{algo:AGD} to efficiently obtain $x_k\approx\argmin_x\Psi_k(x)$ and then update $\lambda_k=\lambda_k^\star(x_k)$ by \eqref{eqn:lambdastar}, which will be proved to be an approximate solution of the subproblem \eqref{eqn:SC-loop1}. 

\subsection{Strongly convex case: Analysis}\label{sec:SC-ub-detail}

Now, we are ready to give the complete algorithm  for strongly convex problems. 
\begin{algorithm2e}[H]\label{algo:SC-APPA}
    \caption{Inexact PPA for strongly convex problems}
    \textbf{Input:} initial point $x_0,\lambda_0$, smoothness parameter $L_f$, strong convexity parameter $\mu_f$, minimal singular value $\sigmamin$, regularization parameter $\ell\geq\muphi$, radius parameter $D$.\\
 \textbf{Initialize:} $\rho=\frac{\muphi}{12\ell}, \delta_k=(1-\rho)^{\frac k2}D$. \\
	\For{$k=1,\dots, T$}
	{
        Update $x_k\leftarrow\AGD(\Psi_k, x_{k-1}, \delta_k)$. 

        Update $\lambda_k$ by \eqref{eqn:lambdastar}:
        $\lambda_k\leftarrow\prox_{\frac{h^\star}{\ell}}\left(\lambda_{k-1}+\frac{Ax_k-b}{\ell}\right).$
        
	}
    \textbf{Output:} $x_T$.
\end{algorithm2e}

\newcommand{\hkp}{\hat{\kappa}}
\newcommand{\hkphi}{\hkp_\Phi}
\newcommand{\rphi}{r_{\Phi}}
\newcommand{\reps}{r_0}

\begin{proposition}\label{prop:sc}
Suppose that $f(x)$ is $L_f$-smooth and $\mu_f$-strongly convex, the matrix $A$ satisfies $\nrm{A}\leq\LA$ and the minimum singular value of $A$ is no smaller than $\muA$. Let $(x^\star,\lambda^\star)$ be the pair of optimal primal and dual variables, and $\{(x_k,\lambda_k)\}$ be the iterate sequence generated by Algorithm \ref{algo:SC-APPA}. Assume that $\ell\geq\mu_\Phi$, then we have for any $0\leq k\leq T$, it holds that
\begin{equation}\label{eqn:lambda-linear}
    \nrm{\lambda_k-\lams}\leq (1-\rho)^{t/2}\cdot M,
\end{equation}
where $M=\max\set{\nrm{\lambda_0-\lams}, 10\LA\muphi^{-1}D}$. If we denote $x_k^\star=\argmin_x \Psi_k(x)$, then for any $2\leq k\leq T$, it holds that
\begin{align}
    \label{eqn:subD}\nrm{x_{k-1}-\xs_{k}}
    \leq&~ \frac{\LA}{\muf}\nrm{\lambda_{k-1}-\lambda_{k-2}}+\delta_{k-1},\\
    \label{eqn:x-linear}\nrm{x_k-\xs}
    \leq&~ \frac{\LA}{\muf}\nrm{\lambda_{k-1}-\lams}+\delta_k.
\end{align}
\end{proposition}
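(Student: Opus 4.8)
The plan is to prove the three estimates in order, establishing \eqref{eqn:lambda-linear} by induction and deriving \eqref{eqn:x-linear} and \eqref{eqn:subD} from the stationarity conditions. The workhorse throughout is the exact proximal-point map of the dual problem, $P(\mu)\defeq\argmax_\lambda\{\Phi(\lambda)-\frac{\ell}{2}\|\lambda-\mu\|^2\}$. First I would show $P$ is a contraction: writing the first-order conditions $\nabla\Phi(P(\mu))=\ell(P(\mu)-\mu)$ for two centers and invoking the $\muphi$-strong concavity of $\Phi$ from Lemma \ref{lemma:Phi-SC}, a standard monotonicity argument gives $\|P(\mu)-P(\mu')\|\leq\frac{\ell}{\ell+\muphi}\|\mu-\mu'\|=\frac{1}{1+12\rho}\|\mu-\mu'\|$. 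Since $\nabla\Phi(\lambda^\star)=0$ forces $P(\lambda^\star)=\lambda^\star$, and the exact subproblem maximizer is $\overline{\lambda}_k\defeq\argmax_\lambda\Phi_k(\lambda)=P(\lambda_{k-1})$, this at once yields both the contraction $\|\overline{\lambda}_k-\lambda^\star\|\leq\frac{1}{1+12\rho}\|\lambda_{k-1}-\lambda^\star\|$ and the consecutive bound $\|\overline{\lambda}_k-\overline{\lambda}_{k-1}\|\leq\|\lambda_{k-1}-\lambda_{k-2}\|$, which I reuse below.

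Next I would control the inexactness of the inner AGD solve. By Proposition \ref{prop:AGD} the inner call returns $x_k$ with $\|x_k-x_k^\star\|\leq\delta_k$, and by construction $\overline{\lambda}_k=\lambda_k^\star(x_k^\star)$ while $\lambda_k=\lambda_k^\star(x_k)$. Since the map $x\mapsto\lambda_k^\star(x)=\prox_{h^\star/\ell}(\lambda_{k-1}+(Ax-b)/\ell)$ composes the nonexpansive proximal operator with an affine map of Lipschitz constant $\nrm{A}/\ell\leq\LA/\ell$, I obtain $\|\lambda_k-\overline{\lambda}_k\|\leq\frac{\LA}{\ell}\delta_k$. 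Combining this with the contraction via the triangle inequality gives the one-step recursion $\|\lambda_k-\lambda^\star\|\leq\frac{1}{1+12\rho}\|\lambda_{k-1}-\lambda^\star\|+\frac{\LA}{\ell}\delta_k$. I then run the induction for \eqref{eqn:lambda-linear}: the base case $k=0$ is immediate from $M\geq\|\lambda_0-\lambda^\star\|$, and for the inductive step, substituting $\delta_k=(1-\rho)^{k/2}D$ and dividing by $(1-\rho)^{(k-1)/2}$ reduces the claim to the scalar inequality $\frac{\LA}{\ell}(1-\rho)^{1/2}D\leq\big((1-\rho)^{1/2}-\frac{1}{1+12\rho}\big)M$. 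This constant-chasing is the step I expect to be the main obstacle: one must verify that the gap $(1-\rho)^{1/2}-\frac{1}{1+12\rho}$ is of order $\rho\approx\muphi/(12\ell)$ on the relevant range $\rho\in(0,1/12]$ (guaranteed since $\ell\geq\muphi$), so that the choice $M\geq 10\LA\muphi^{-1}D$ leaves enough slack. It is crucial here to retain the sharp rate $\frac{1}{1+12\rho}$ rather than the coarser bound $1-\rho$, since the latter would force a larger constant in $M$.

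Finally, \eqref{eqn:x-linear} and \eqref{eqn:subD} follow from stationarity plus the $\muf$-strong convexity of $f$. Using $\nabla\Psi_k(x_k^\star)=\nabla f(x_k^\star)+\At\overline{\lambda}_k=0$ (from the Danskin representation of $\nabla\Psi_k$) together with the KKT condition $\nabla f(\xs)+\At\lambda^\star=0$, strong convexity gives $\muf\|x_k^\star-\xs\|^2\leq\langle\At(\lambda^\star-\overline{\lambda}_k),x_k^\star-\xs\rangle\leq\LA\|\overline{\lambda}_k-\lambda^\star\|\,\|x_k^\star-\xs\|$, hence $\|x_k^\star-\xs\|\leq\frac{\LA}{\muf}\|\overline{\lambda}_k-\lambda^\star\|\leq\frac{\LA}{\muf}\|\lambda_{k-1}-\lambda^\star\|$ by the contraction; adding $\|x_k-x_k^\star\|\leq\delta_k$ yields \eqref{eqn:x-linear}. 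For \eqref{eqn:subD}, the identical strong-convexity argument applied to $\nabla\Psi_k(x_k^\star)=0$ and $\nabla\Psi_{k-1}(x_{k-1}^\star)=0$ gives $\|x_{k-1}^\star-x_k^\star\|\leq\frac{\LA}{\muf}\|\overline{\lambda}_k-\overline{\lambda}_{k-1}\|\leq\frac{\LA}{\muf}\|\lambda_{k-1}-\lambda_{k-2}\|$ by the consecutive bound from the first step, and combining with $\|x_{k-1}-x_{k-1}^\star\|\leq\delta_{k-1}$ through the triangle inequality completes the proof.
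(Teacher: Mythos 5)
Your proposal is correct, and while it shares the paper's overall skeleton (contract the exact dual prox step, control the inexactness via the non-expansiveness of $\prox_{h^\star/\ell}$ composed with $x\mapsto(Ax-b)/\ell$, then transfer to the primal via strong convexity), both of its sub-arguments differ from the paper's in technique. For \eqref{eqn:lambda-linear}, the paper works with squared norms: it applies a three-point identity to the optimality condition of $\lambda_k^\star$, inserts Young's inequality with $c=\muphi/(6\ell)$ to absorb the inexactness, and then unrolls the resulting squared-norm recursion as a geometric sum. You instead keep plain norms, use the standard resolvent contraction $\|P(\mu)-P(\mu')\|\leq\frac{\ell}{\ell+\muphi}\|\mu-\mu'\|$, and close an induction; the required scalar inequality does hold, since $(1-\rho)^{1/2}-\frac{1}{1+12\rho}\geq(1-\rho)-(1-6\rho)=5\rho$ on $\rho\in(0,1/12]$, so $M\geq 10\LA\muphi^{-1}D$ leaves ample slack (indeed $M\geq\frac{12}{5}\LA\muphi^{-1}D$ would already suffice), and your insistence on the sharp factor $\frac{1}{1+12\rho}$ rather than $1-\rho$ is exactly what makes the induction close. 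For \eqref{eqn:subD} and \eqref{eqn:x-linear}, the paper rewrites $\Psi_k$ through the $\frac{1}{\ell}$-smooth Moreau-type envelope $\hat h$ and compares $\nabla\Psi_k$ with $\nabla\Psi_{k-1}$ at the common point $\xs_{k-1}$, whereas you invoke the saddle-point identity $\nabla f(\xs_k)=-A^\T\lambda_k^\star$ together with the $\muf$-strong convexity of $f$ alone; your route avoids introducing $\hat h$ entirely and is arguably cleaner, at the cost of routing the consecutive bound through the extra contraction step $\|\lambda_k^\star-\lambda_{k-1}^\star\|\leq\|\lambda_{k-1}-\lambda_{k-2}\|$ (which is valid). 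The only cosmetic caveat is that $\Phi$ contains $h^\star$ and need not be differentiable, so your first-order condition $\nabla\Phi(P(\mu))=\ell(P(\mu)-\mu)$ should be stated with a subgradient, exactly as the paper does with $\Phi_k'\in\partial\Phi(\lambda_k^\star)$; the monotonicity argument is unaffected.
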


\begin{proof}
Let $\lams_k=\argmax_\lambda \Phi_k(\lambda)$ and $x_k^\star=\argmin_x \Psi_k(x)$. Recall that the output $x_k$ of Algorithm $\AGD(\Psi_k,x_{k-1},\delta_k)$ guarantees that $\nrm{x_k-\xs_k}\leq \delta_k$. 
According to the dual update rule \eqref{eqn:lambdastar} and the optimality of $(\xs_k,\lams_k)$, we have
\[
    \lambda_k^\star=\prox_{\frac{h^\star}{\ell}}\left(\lambda_{k-1}+\frac{Ax_k^\star-b}{\ell}\right),\quad
    \lambda_k=\prox_{\frac{h^\star}{\ell}}\left(\lambda_{k-1}+\frac{Ax_k-b}{\ell}\right).
\]
Thus, we can deduce $\nrm{\lambda_k-\lams_k}\leq \nrm{\frac{Ax_k-A\xs_k}{\ell}}\leq \frac{\LA \delta_k}{\ell}$ due to the non-expansiveness of proximal operator. By the definition of $\lams_k$, it holds that
\begin{align*}
    0\in \partial \Phi_k(\lams_k)=\partial \Phi(\lams_k) + \ell(\lams_k-\lambda_{k-1}),
\end{align*}
which implies that there exists $\Phi^\prime_k\in\partial \Phi(\lams_k)$ satisfying $\Phi^\prime_k + \ell(\lams_k-\lambda_{k-1})=0$.
Combining the fact that $\Phi$ is $\muphi$-strongly concave and $\lambda^\star=\argmax_\lambda\Phi(\lambda)$, we have
\begin{align*}
    \muphi\nrm{\lams_k-\lams}^2
    \leq&~ 
    \iprod{ \Phi^\prime_k }{ \lams_k-\lams }
    = \ell \iprod{ \lambda_{k-1}-\lams_k }{ \lams_k-\lams }\\
    =&~ 
    \frac{\ell}{2}\paren{ \nrm{\lambda_{k-1}-\lams}^2-\nrm{\lams_k-\lams}^2-\nrm{\lams_k-\lambda_{k-1}}^2 }.
\end{align*}
Hence, for any constant $c>0$, we have
\begin{align*}
    \frac{\ell}{\ell+2\muphi}\nrm{\lambda_{k-1}-\lams}^2 \geq \nrm{\lams_k-\lams}^2 \geq (1+c)^{-1}\nrm{\lambda_k-\lams}^2-c^{-1}\nrm{\lams_k-\lambda_k}^2.
\end{align*}
Picking $c=\frac{\muphi}{6\ell}$ and utilizing the assumption that $\ell\geq\mu_\Phi$ yields that
\begin{align*}
    \nrm{\lambda_k-\lams}^2\leq \paren{1-\frac{\muphi}{6\ell}}\nrm{\lambda_{k-1}-\lams}^2+\frac{7\ell}{\muphi}(C_1\delta_k)^2,
\end{align*}
where we denote $C_1=\LA/\ell$.
Therefore, by recursively applying the inequality above, we have
\begin{equation*}
    \nrm{\lambda_k-\lams}^2 \leq (1-2\rho)^{k}\nrm{\lambda_0-\lams}^2+\frac{7}{12\rho}\sum_{t=1}^{k} (1-2\rho)^{k-t}(C_1\delta_t)^2.
\end{equation*}
By our choice of $\delta_t=(1-\rho)^{t/2}D$, it holds
\begin{align*}
    \sum_{t=1}^{k} (1-2\rho)^{k-t}(\delta_t)^2\leq\brac{ (1-\rho)^k-(1-2\rho)^k }\cdot \frac{D^2}{\rho}.
\end{align*}
Putting these pieces together and plugging in the definition of $C_1$ and $\rho$, we get \eqref{eqn:lambda-linear}.

Recall the definition of $\Psi_k$ given in \eqref{eqn:SC-dual}. We can it rewrite it into
\begin{align*}
    \Psi_k(x)=f(x)+\hat{h}(Ax-b+\ell \lambda_{k-1})-\frac{\ell}{2}\|\lambda_{k-1}\|^2,
\end{align*}
where $\hat{h}$ is defined by
\begin{align*}
    \hat{h}(u)=\max_{\lambda}\paren{ 
\lambda^\T u - h^\star(\lambda)-\frac{\ell}{2}\nrm{\lambda}^2 }.
\end{align*}
Note that $\hat{h}(u)$ is the conjugate function of $h^\star(\lambda)+\frac\ell2\|\lambda\|^2$, and hence it follows from Lemma \ref{lemma:conjugate} that $\hat{h}(u)$ is $\frac{1}{\ell}$-smooth. By definition, we know $\nabla \Psi_k(\xs_k)=\nabla \Psi_{k-1}(\xs_{k-1})$, and hence
\begin{align*}
    \mu_f\nrm{\xs_k-\xs_{k-1}}
    \leq&~ \nrm{ \nabla \Psi_k(\xs_k) - \nabla \Psi_k(\xs_{k-1}) }
    = \nrm{ \nabla \Psi_{k-1}(\xs_{k-1}) - \nabla \Psi_k(\xs_{k-1}) } \\
    =&~ \nrm{ A^\T \nabla \hat{h}(A\xs_{k-1}-b+\ell\lambda_{k-2}) \!-\! A^\T \nabla \hat{h}(A\xs_{k-1}-b+\ell\lambda_{k-1}) }\\
    \leq&~ \nrmA \cdot \frac{1}{\ell} \cdot \ell \nrm{\lambda_{k-2}-\lambda_{k-1}}=\nrmA\nrm{\lambda_{k-2}-\lambda_{k-1}},
\end{align*}
where the first inequality follows from the $\muf$-strong-convexity of $f$, and the last inequality holds because $\hat{h}$ is $\frac{1}{\ell}$-smooth. Combining the fact that $\nrm{x_{k-1}-\xs_{k}}\leq \delta_k+\nrm{\xs_{k-1}-\xs_{k}}$, we get \eqref{eqn:subD}.

Similarly, by the optimality of $(\xs,\lams)$, we have $\xs$ is the minimum of
\begin{align*}
    \Psi_\star(x)\defeq f(x)+\hat{h}(Ax-b+\ell \lams)-\frac{\ell}{2}\|\lams\|^2.
\end{align*}
Repeating the argument above yields \eqref{eqn:x-linear}. \qed
\end{proof}

\begin{theorem}\label{thm:SC-upper}
Under the same assumption of Proposition \ref{prop:sc} and $\nrm{x_0-x^\star}\leq D$, in each iteration of AGD, the number of inner iteration $T_k$ of $\AGD(\Psi_k,x_{k-1},\delta_k)$ can be upper bound by
\begin{align*}
    T_k\leq 8\sqrt{\frac{L_f+\ell^{-1}\LA^2}{\muf}}\log\paren{\frac{10\kappa_f\kappa_A D_\star}{D}},
\end{align*}
where $D_\star\defeq \nrm{x_0-\xs}+\frac{\LA}{L_f}\nrm{\lambda_0-\lams}$. Furthermore, for Algorithm \ref{algo:SC-APPA} to find an approximate solution $x_T$ satisfying $\nrm{x_T-\xs}\leq \epsilon$, the number of outer iterations is
\begin{align*}
    T\leq \frac{12\ell}{\muphi}\log\paren{100\kappa_f\kappa_A\cdot\frac{D}{\epsilon}}.
\end{align*}
\end{theorem}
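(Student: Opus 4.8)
The plan is to establish the two bounds separately, first the per-iteration inner complexity $T_k$ via the convergence guarantee of AGD (Proposition \ref{prop:AGD}), then the outer iteration count $T$ via the linear convergence of the dual variable (equation \eqref{eqn:lambda-linear}).

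For the inner bound, I would apply Proposition \ref{prop:AGD} to the subproblem $\min_x \Psi_k(x)$. Since $\Psi_k$ is $L_\Psi$-smooth with $L_\Psi = L_f + \LA^2/\ell$ and $\muf$-strongly convex, the AGD call $\AGD(\Psi_k, x_{k-1}, \delta_k)$ terminates in at most $2\sqrt{L_\Psi/\muf}\log\paren{2(L_\Psi/\muf)\nrm{x_{k-1}-\xs_k}/\delta_k}$ iterations. The two tasks here are: (i) bound the ratio $\nrm{x_{k-1}-\xs_k}/\delta_k$ inside the logarithm, and (ii) simplify the condition-number factor $L_\Psi/\muf$. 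For (i), I would use \eqref{eqn:subD}, which gives $\nrm{x_{k-1}-\xs_k}\leq \frac{\LA}{\muf}\nrm{\lambda_{k-1}-\lambda_{k-2}}+\delta_{k-1}$; combining the triangle inequality $\nrm{\lambda_{k-1}-\lambda_{k-2}}\leq \nrm{\lambda_{k-1}-\lams}+\nrm{\lambda_{k-2}-\lams}$ with the linear rate \eqref{eqn:lambda-linear} bounds the numerator by a constant multiple of $(1-\rho)^{(k-2)/2}(\LA/\muf)M + \delta_{k-1}$, while $\delta_k = (1-\rho)^{k/2}D$ in the denominator. Since $M$, $\delta_{k-1}$, and $\delta_k$ all carry matching geometric decay factors $(1-\rho)^{k/2}$, these cancel, leaving a ratio that is $O(\kappa_f \kappa_A D_\star/D)$ after plugging in $M\leq \mathrm{const}\cdot(\nrm{\lambda_0-\lams}+\LA\muphi^{-1}D)$ and recalling $\muphi = \muA^2/L_f$. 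The logarithm of this ratio then produces the claimed $\log(10\kappa_f\kappa_A D_\star/D)$ term, and the prefactor $\sqrt{L_\Psi/\muf}$ is exactly $\sqrt{(L_f+\ell^{-1}\LA^2)/\muf}$; the constant $8$ (rather than $2$) absorbs the slack in the logarithm argument.

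For the outer bound, the linear rate \eqref{eqn:lambda-linear} is not stated directly for $\nrm{x_k-\xs}$, so I would invoke \eqref{eqn:x-linear}: $\nrm{x_k-\xs}\leq \frac{\LA}{\muf}\nrm{\lambda_{k-1}-\lams}+\delta_k$. Feeding \eqref{eqn:lambda-linear} into the first term gives $\frac{\LA}{\muf}(1-\rho)^{(k-1)/2}M$, and $\delta_k=(1-\rho)^{k/2}D$ decays at the same rate, so $\nrm{x_k-\xs}\leq C(1-\rho)^{k/2}$ for a constant $C$ of order $\kappa_A\kappa_f D$ (again using $M$ and the definitions of $\rho, \muphi$). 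To force this below $\epsilon$, I set $(1-\rho)^{T/2}\cdot C\leq \epsilon$, take logarithms, and use the elementary bound $\log\frac{1}{1-\rho}\geq \rho$ together with $\rho = \muphi/(12\ell)$ to convert $\frac{2}{\log(1/(1-\rho))}$ into the factor $\frac{12\ell}{\muphi}$. The logarithmic argument $\log(100\kappa_f\kappa_A D/\epsilon)$ then emerges from $\log(C/\epsilon)$ after bounding the accumulated constants.

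The main obstacle I anticipate is the bookkeeping in the inner bound: ensuring that the geometric factors in the numerator (coming from $M(1-\rho)^{(k-2)/2}$ and $\delta_{k-1}$) and the denominator ($\delta_k$) cancel cleanly so that the logarithm is uniform in $k$, and verifying that $M=\max\set{\nrm{\lambda_0-\lams}, 10\LA\muphi^{-1}D}$ combines with $\LA/\muf$ to reproduce the $\kappa_f\kappa_A D_\star$ scaling with $D_\star = \nrm{x_0-\xs}+\frac{\LA}{L_f}\nrm{\lambda_0-\lams}$. This requires carefully tracking how $\muphi=\muA^2/L_f$ interacts with $\LA/\muf$ to yield $\kappa_A=\LA/\muA$ and $\kappa_f=L_f/\muf$ in the right places, rather than being a conceptual difficulty.
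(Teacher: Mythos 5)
Your proposal follows essentially the same route as the paper: the inner bound is obtained by feeding \eqref{eqn:subD} together with the triangle inequality and the linear rate \eqref{eqn:lambda-linear} into Proposition \ref{prop:AGD} so that the geometric factors cancel inside the logarithm, and the outer bound comes from combining \eqref{eqn:x-linear} with \eqref{eqn:lambda-linear} and solving $(1-\rho)^{T/2}C\leq\epsilon$ for $T$. The paper leaves the final logarithmic conversion implicit ("the desired result follows immediately") where you spell out $\log\frac{1}{1-\rho}\geq\rho$, but this is only a difference in explicitness, not in method.
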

One comment here is that although we assume knowing an upper bound $D$ of the distance from $x_0$ to the optimal solution $x^*$, it only appears in the logarithmic terms. Hence one can use a very loose upper estimation of the distance in the algorithm without deteriorating the performance. 
\begin{proof}
We first consider the case $k\geq 2$.
By combining \eqref{eqn:subD} and \ref{eqn:lambda-linear}, we have
\begin{align*}
    \nrm{x_{k-1}-\xs_{k}}
    \leq \frac{\LA}{\muf}\cdot 2(1-\rho)^{k/2-1}M+\delta_{k-1}
    \leq (1-\rho)^{k/2}\paren{2D+\frac{3\LA}{\muf}M}.
\end{align*}
where the last inequality holds because $0<\rho\leq\frac1{12}$.
Therefore, by Proposition \ref{prop:AGD} and our choice $\delta_k=(1-\rho)^{k/2}D$, the inner number of steps $T_k$ of $\AGD(\Psi_k,x_{k-1},\delta_k)$ can be upper bound by
\begin{align*}
    T_k
    \leq&~
    2\sqrt{\frac{L_{\Psi}}{\mu_{\Psi}}}\log\paren{ \frac{2L_{\Psi}}{\mu_{\Psi}}\cdot \frac{\nrm{x_{k-1}-\xs_{k}}}{\delta_k} }\\
    \leq&~
    2\sqrt{\frac{L_{\Psi}}{\mu_{\Psi}}}\log\paren{ \frac{2L_{\Psi}}{\mu_{\Psi}}\cdot\paren{2+\frac{3\LA}{\muf}\frac{M}{D}} } \\
    \leq&~ 8\sqrt{\frac{L_f+\ell^{-1}\LA^2}{\muf}}\log\paren{10\kappa_f\kappa_A\left(1+\frac{\LA\nrm{\lambda_0-\lams}}{L_f D}\right)}.
\end{align*}
where the last inequality follows from plugging in the definition of $M$, $L_\Psi$ and $\mu_{\Psi}=\muf$. The case $k=1$ follows similarly.

By combining \eqref{eqn:x-linear} and \eqref{eqn:lambda-linear}, we have
\begin{align*}
    \nrm{x_T-\xs}\leq \frac{\LA}{\muf}\nrm{\lambda_{T-1}-\lams}+\delta_T
    \leq (1-\rho)^{T/2}\paren{D+\frac{3\LA}{\muf}M}.
\end{align*}
The desired result follows immediately. \qed
\end{proof}

\begin{corollary}\label{coro:strongly-convex}
Suppose that the same assumptions of Proposition \ref{prop:sc} hold and $D_\star\leq D$. In order to find an approximate solution $x_T$ satisfying $\suboptsc(x_T)\leq\epsilon$, the total number of gradient evaluations for Algorithm \ref{algo:SC-APPA} is bounded by
    \[
    \tilde O\left(\kappa_A\sqrt{\kappa_f}\log(D/\epsilon)\right).
    \]
\end{corollary}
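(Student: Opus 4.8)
The plan is to combine the two estimates furnished by Theorem~\ref{thm:SC-upper}: the total number of gradient evaluations equals (up to a constant factor) the total number of inner AGD steps summed over all outer iterations, i.e. $\sum_{k=1}^T T_k \leq T\cdot\max_k T_k$, since each inner step costs one evaluation of $\nabla\Psi_k$ and hence a constant number of first-order oracle calls. So I would bound this product and then optimize over the free regularization parameter $\ell$, which is the only remaining degree of freedom.

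First, because $\suboptsc(x)=\nrm{x-\xs}^2$, the target $\suboptsc(x_T)\leq\epsilon$ is equivalent to $\nrm{x_T-\xs}\leq\sqrt{\epsilon}$. I would therefore invoke Theorem~\ref{thm:SC-upper} with accuracy $\sqrt{\epsilon}$ in place of $\epsilon$, which yields the outer count $T\leq\frac{12\ell}{\muphi}\log(100\kappa_f\kappa_A D/\sqrt{\epsilon})$ together with the per-step inner count $T_k\leq 8\sqrt{(L_f+\ell^{-1}\LA^2)/\muf}\cdot\log(\cdots)$, where the logarithmic factor in $T_k$ depends only on the problem constants and not on $\epsilon$.

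The crux is the choice of $\ell$. Dropping logarithmic factors, the product $T\cdot T_k$ is proportional to $\frac{\ell}{\muphi}\sqrt{(L_f+\LA^2/\ell)/\muf}$; squaring gives $\frac{1}{\muphi^2\muf}(\ell^2 L_f+\ell\LA^2)$, which is strictly increasing in $\ell$ since its derivative $2\ell L_f+\LA^2$ is positive. Hence the feasibility constraint $\ell\geq\muphi$ is binding, and I would set $\ell=\muphi=\muA^2/L_f$. With this choice the outer count collapses to $T=O\paren{\log(\kappa_f\kappa_A D/\sqrt{\epsilon})}$, while $\ell^{-1}\LA^2=\LA^2 L_f/\muA^2=\kappa_A^2 L_f$, so that $T_k=O\paren{\sqrt{\kappa_f(1+\kappa_A^2)}\,\log(\cdots)}$. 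Using $\kappa_A\geq 1$ to write $\sqrt{1+\kappa_A^2}=O(\kappa_A)$ then gives $T_k=O\paren{\kappa_A\sqrt{\kappa_f}\,\log(\cdots)}$.

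Multiplying the two bounds yields $T\cdot\max_k T_k=\tilde O\paren{\kappa_A\sqrt{\kappa_f}\,\log(D/\epsilon)}$, where the $\tilde O$ absorbs the product of the two logarithmic factors. The main obstacle is not any hard estimate but rather verifying that $\ell=\muphi$ is genuinely the optimal trade-off: the monotonicity argument above shows that pushing $\ell$ down to its smallest admissible value is best, and the point worth checking carefully is that the factor $L_f$ appearing in $\muphi=\muA^2/L_f$ in the denominator of $T$ exactly cancels the $L_f$ produced by $\LA^2/\ell=\kappa_A^2 L_f$ inside $T_k$, so that one obtains the sharp $\kappa_A\sqrt{\kappa_f}$ scaling instead of a spurious higher power of $\kappa_A$.
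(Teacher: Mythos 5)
Your proposal is correct and follows essentially the same route as the paper: invoke Theorem~\ref{thm:SC-upper}, set $\ell=\muphi$ so that the outer count is $O(\log(\kappa_f\kappa_A D/\sqrt{\epsilon}))$ and each inner count is $O(\kappa_A\sqrt{\kappa_f})$ up to a logarithm controlled by $D_\star\leq D$, and multiply. The extra monotonicity argument showing $\ell=\muphi$ is the optimal choice, and the explicit $\suboptsc(x_T)\leq\epsilon\Leftrightarrow\nrm{x_T-\xs}\leq\sqrt{\epsilon}$ conversion, are correct refinements of what the paper leaves implicit.
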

\begin{proof}
According to Theorem \ref{thm:SC-upper}, if we let $\ell=\mu_\Phi$, then the complexity of each inner loop is $\tilde O(\kappa_A\sqrt{\kappa_f})$, the complexity of outer loop is $\tilde O\left(\log(D/\epsilon)\right)$. Therefore, the overall complexity is $\tilde O\left(\kappa_A\sqrt{\kappa_f}\log(D/\epsilon)\right)$.
\end{proof}

\subsection{Non-convex case}
For non-convex problems, our algorithm is present in Algorithm \ref{PPA-nonconvex}, which employs  the inexact proximal point algorithm in the outer iterations while solving the strongly convex subproblems via Algorithm \ref{algo:SC-APPA}. 

\begin{algorithm2e}[H]\label{PPA-nonconvex}
	\caption{Inexact PPA for non-convex problems}
     \textbf{Input:} initial point $x_0$, smoothness parameter $L_f$, condition number $\kappa_A$, subproblem error tolerance $\{\delta_k\}$ and the maximum iteration number $T$.\\
    \For{$k=1,\cdots, T$}
    {
        Apply Algorithm \ref{algo:SC-APPA} to find
        \begin{equation}\label{NC-PPA}
            x_{k}\approx x_{k}^\star:=\argmin_x \left\{F(x)+L_f\|x-x_{k-1}\|^2\right\},
        \end{equation}
        such that 
        $\|x_{k}-x_{k}^\star\|\leq\delta_{k}$.
    }
    \textbf{Output:} $\{x_k\}_{k=1}^T$.
\end{algorithm2e}

Under a suitable sequence of $\{\delta_k\}$, we provide the convergence rate of Algorithm \ref{PPA-nonconvex} in the following theorem. 

\begin{theorem}\label{thm:NC-PPA}
    Suppose that $f(x)$ is $L_f$-smooth, the condition number of $A$ is $\kappa_A$. 
    Assume that $F(x_1^\star)-\inf_x F(x)\leq\Delta^\prime$, $\delta_k=\frac{\sqrt{\Delta^\prime/L_f}}{2k}$ and $\{x_k\}$ be the iterate sequence generated by Algorithm \ref{PPA-nonconvex}. Then we have 
    \[
        \min_{0\leq k<T} \mathrm{SubOpt}(x_k)\leq \sqrt{\frac{5L_f\Delta^\prime}{T}}.
    \]
    In particular, suppose that the initial point $x_0\in\dom F$ and $F(x_0)-\inf_x F(x)\leq\Delta$ for some $\Delta>0$, then taking $\Delta'=\Delta$ yields
    \begin{equation}\label{eqn:NC-upper}
    \min_{0\leq k<T} \mathrm{SubOpt}(x_k)\leq \sqrt{\frac{5L_f\Delta}{T}}.
    \end{equation}
\end{theorem}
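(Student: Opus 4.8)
The plan is to read Algorithm~\ref{PPA-nonconvex} as an inexact proximal point method applied to the $L_f$-weakly convex function $F$, and to analyze it through its Moreau envelope so as to bypass the nonsmoothness of $h(A\cdot-b)$. First I would record that, since $f$ is $L_f$-smooth and $h(A\cdot-b)$ is convex, the subproblem objective $\phi_k(x)\defeq F(x)+L_f\nrm{x-x_{k-1}}^2$ is $L_f$-strongly convex; hence $x_k^\star$ is its unique minimizer and the exact descent relation $F(x_k^\star)+L_f\nrm{x_k^\star-x_{k-1}}^2=\min_x\phi_k(x)\le\phi_k(x_{k-1})=F(x_{k-1})$ holds.

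The key device is the Moreau envelope $F_\lambda(y)\defeq\min_x\{F(x)+L_f\nrm{x-y}^2\}$, which is finite everywhere (even at points outside $\dom F$), satisfies $F_\lambda(y)\ge\inf_x F(x)$, and obeys $F_\lambda(x_{k-1})=F(x_k^\star)+L_f\nrm{x_k^\star-x_{k-1}}^2$ with the minimum attained exactly at $x_k^\star$. I would then establish a one-step decrease of this potential by plugging the suboptimal point $x_k^\star$ into the minimization defining $F_\lambda(x_k)$ and invoking $\nrm{x_k-x_k^\star}\le\delta_k$: this gives $F_\lambda(x_k)\le F(x_k^\star)+L_f\nrm{x_k^\star-x_k}^2\le F(x_k^\star)+L_f\delta_k^2$, which combined with the identity above yields $F_\lambda(x_k)\le F_\lambda(x_{k-1})-L_f\nrm{x_k^\star-x_{k-1}}^2+L_f\delta_k^2$. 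Telescoping this over $k$ and using $F_\lambda(x_T)\ge\inf_x F$, with the first displacement absorbed into $F_\lambda(x_0)=F(x_1^\star)+L_f\nrm{x_1^\star-x_0}^2$, produces $L_f\sum_{k=2}^T\nrm{x_k^\star-x_{k-1}}^2\le\big(F(x_1^\star)-\inf_x F\big)+L_f\sum_{k=1}^T\delta_k^2\le\Delta^\prime+\tfrac{\Delta^\prime}{4}\sum_{k=1}^T k^{-2}$, where the last step uses $\delta_k=\tfrac{\sqrt{\Delta^\prime/L_f}}{2k}$ to make the accumulated error a modest multiple of $\Delta^\prime$.

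It then remains to relate the proximal displacement $\nrm{x_k^\star-x_{k-1}}$ to the measure $\suboptnc(x_{k-1})$. Here I would compare the forward-backward map defining $\suboptnc$ with the implicit proximal step producing $x_k^\star$: writing out the optimality condition of $x_k^\star$ shows that, were the gradient inside $\proxhAx$ frozen at $x_k^\star$ instead of at $x_{k-1}$, the resulting forward-backward point would coincide exactly with $x_k^\star$. Nonexpansiveness of the proximal operator of $h(A\cdot-b)$ together with the $L_f$-Lipschitz continuity of $\nabla f$ then controls the perturbation from evaluating the gradient at $x_{k-1}$, yielding a bound of the form $\suboptnc(x_{k-1})\le\tfrac32 L_f\nrm{x_k^\star-x_{k-1}}$.

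Combining the two estimates through a pigeonhole argument over the $T-1$ controlled steps, and using $\min_{0\le k<T}\suboptnc(x_k)\le\min_{1\le k\le T-1}\suboptnc(x_k)$, gives $\min_{0\le k<T}\suboptnc(x_k)^2\le\tfrac94 L_f\cdot\frac{\Delta^\prime+\tfrac{\Delta^\prime}{4}\sum_{k} k^{-2}}{T-1}$; tracking the absolute constants (with $\sum_{k\ge1}k^{-2}=\pi^2/6$) collapses this to the claimed $5L_f\Delta^\prime/T$. The final statement then follows since $x_0\in\dom F$ forces $F_\lambda(x_0)\le F(x_0)$, hence $F(x_1^\star)-\inf_x F\le F(x_0)-\inf_x F\le\Delta$, so $\Delta^\prime=\Delta$ is admissible. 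I expect the main obstacle to be exactly the nonsmoothness of $h$: unlike the smooth case, the iterate accuracy $\nrm{x_k-x_k^\star}\le\delta_k$ cannot be converted directly into a bound on the function-value gap $F(x_k)-F(x_k^\star)$, and the Moreau-envelope potential is the device that circumvents this; a secondary delicate point is the constant bookkeeping in the stationarity comparison required to land on exactly $\sqrt{5}$.
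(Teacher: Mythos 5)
Your proposal is correct and follows essentially the same route as the paper: the Moreau-envelope potential $F_\lambda(x_{k-1})=F(x_k^\star)+L_f\|x_k^\star-x_{k-1}\|^2$ is precisely the quantity the paper telescopes via $F(x_{k+1}^\star)+L_f\|x_{k+1}^\star-x_k\|^2\le F(x_k^\star)+L_f\delta_k^2$, and your stationarity bound rests on the same fixed-point identity $x_{k+1}^\star=\proxhAx\big(x_k-\frac{1}{2L_f}\nabla f(x_{k+1}^\star)\big)$ combined with nonexpansiveness of the proximal map (where you even obtain the sharper factor $3/2$ in place of the paper's $2$). The only caveat is that your final constant bookkeeping, like the paper's own, does not literally land on the factor $5$ for the smallest values of $T$; this cosmetic looseness is shared with the original proof.
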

\begin{proof}
By the definition of $x_{k+1}^\star$, on the one hand, we have $F(x_{k+1}^\star)+L_f\|x_{k+1}^\star-x_k\|^2\leq F(x_k^\star)+L_f\|x_k^\star-x_k\|^2$, which yields
\begin{equation}\label{eqn:NC-star1}
\|x_{k+1}^\star-x_k\|^2\leq \frac1{L_f}(F(x_k^\star)-F(x_{k+1}^\star))+\delta_k^2.
\end{equation}
On the other hand, it holds
\[
x_{k+1}^\star=\proxhAx\left(x_k-\frac1{2L_f}\nabla f(x_{k+1}^\star)\right).
\]
Therefore, we have
\begin{align*}
&\left\|x_k-\proxhAx\left(x_k-\frac1{2L_f}\nabla f(x_k)\right)\right\|\\
\leq & \ \|x_k-x_{k+1}^\star\|+\left\|x_{k+1}^\star-\proxhAx\left(x_k-\frac1{2L_f}\nabla f(x_k)\right)\right\|\\
\leq & \ 2\|x_k-x_{k+1}^\star\|,
\end{align*}
where the last inequality holds because the proximal operator is non-expansive and $f(\cdot)$ is $L_f$-smooth. Combining with \eqref{eqn:NC-star1}, we obtain
\begin{align*}
&L_f^2\left\|x_k-\proxhAx\left(x_k-\frac1{2L_f}\nabla f(x_k)\right)\right\|^2\\
&\leq 4L_f^2\|x_k-x_{k+1}^\star\|^2\leq 4L_f(F(x_k^\star)-F(x_{k+1}^\star))+4L_f^2\delta_k^2.
\end{align*}
Summing up the above inequality over $t = 1, \cdots, T$,
\begin{align*}
    \frac 1T \sum_{k=1}^{T}L_f^2\left\|x_{k}\!-\!\proxhAx\left(x_{k}\!-\!\frac{\nabla f(x_{k})}{2L_f}\right)\right\|^2
    \!\leq\! \frac{4L_f(F(x_1^\star)\!-\!F(x_{T+1}^\star))\!+\! L_f\Delta^\prime}{T}.
\end{align*}
Since $F(x_1^\star)-F(x_{T+1}^\star)\leq F(x_1^\star)-\inf_x F(x)\leq \Delta^\prime$, it holds that
\[
    \min_{1\leq k\leq T} L_f\left\|x_{k}-\proxhAx\left(x_{k}-\frac{\nabla f(x_{k})}{2L_f}\right)\right\|\leq \sqrt{\frac{5L_f\Delta^\prime}{T}}.
\]

Furthermore, under the assumption of $x_0\in\dom F$, it holds $F(x_1^\star)\leq F(x_0)$ and accordingly $F(x_1^\star)-\inf_x F(x)\leq F(x_0)-\inf_x F(x)\leq\Delta$. This completes the proof. 
\qed
\end{proof}

In Theorem \ref{thm:NC-PPA}, we present the first inequality by incorporating the definition of $\Delta^\prime$ specifically for the scenario where $x_0\notin \dom F$. This is necessary because in such cases, $F(x_0)-\inf_x F(x)$ can be infinite, as exemplified by $h(x)$ when it is an indicator function. Consequently, it becomes impossible to find a finite $\Delta$ that satisfies $F(x_0)-\inf_x F(x)\leq \Delta$. By introducing the definition of $\Delta^\prime$, we ensure the existence of a finite $\Delta^\prime$ and thus establish a well-defined result.

\begin{corollary}
    Under the same assumption and same choice of $\delta_k$ in Theorem \ref{thm:NC-PPA}, in order to find an approximate solution $x_T$ satisfying $\suboptnc(x_T)\leq \epsilon$, the total number of gradient evaluations for Algorithm \ref{PPA-nonconvex} is bounded by
    \[
        \tilde{{O}}\left(\frac{\kappa_A L_f\Delta^\prime}{\epsilon^2}\right).
    \]
    Furthermore, if $x_0\in\dom F$, the total number is bounded by $\tilde{{O}}\left(\frac{\kappa_A L_f\Delta}{\epsilon^2}\right)$.
\end{corollary}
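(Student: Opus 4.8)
The plan is to read off the total gradient count as the product of two quantities that are each already controlled by an earlier result: the number of outer proximal-point iterations, governed by Theorem \ref{thm:NC-PPA}, and the per-subproblem cost of solving \eqref{NC-PPA} with Algorithm \ref{algo:SC-APPA}, governed by Corollary \ref{coro:strongly-convex}. So the proof is essentially ``multiply the two bounds and absorb the logarithms.''

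First I would fix the outer iteration count. By Theorem \ref{thm:NC-PPA}, after $T$ outer steps we have $\min_{0\le k<T}\suboptnc(x_k)\le\sqrt{5L_f\Delta'/T}$, so to guarantee $\min_{0\le k<T}\suboptnc(x_k)\le\epsilon$ it suffices to take $T=\lceil 5L_f\Delta'/\epsilon^2\rceil=O\!\left(L_f\Delta'/\epsilon^2\right)$, and the returned solution is the best iterate.

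The crux is the per-subproblem cost. Each subproblem \eqref{NC-PPA} is exactly of the composite form \eqref{comp-prob} with smooth part $\tilde f_k(x):=f(x)+L_f\|x-x_{k-1}\|^2$ and with $h,A,b$ unchanged, so $\kappa_A$ is untouched. The key computation is that, since $f$ is $L_f$-smooth it is $L_f$-weakly convex, i.e. $f(x')\ge f(x)+\langle\nabla f(x),x'-x\rangle-\tfrac{L_f}{2}\|x'-x\|^2$; adding $L_f\|\cdot-x_{k-1}\|^2$, whose quadratic lower model carries coefficient $L_f$, upgrades $\tilde f_k$ to an $L_f$-strongly convex function, while its gradient Lipschitz constant is at most $L_f+2L_f=3L_f$. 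Hence $\tilde f_k$ is $L_f$-strongly convex and $3L_f$-smooth, so its condition number is $\kappa_{\tilde f_k}=3=O(1)$. Meeting the subproblem requirement $\|x_k-x_k^\star\|\le\delta_k$ is precisely the $\suboptsc$ target of Corollary \ref{coro:strongly-convex} with tolerance $\delta_k^2$, so each subproblem costs $\tilde O\!\left(\kappa_A\sqrt{\kappa_{\tilde f_k}}\log(1/\delta_k)\right)=\tilde O(\kappa_A)$ gradient evaluations, with the $\log(1/\delta_k)$ factor swept into $\tilde O$.

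Multiplying the two bounds gives $\sum_{k=1}^T\tilde O(\kappa_A)=\tilde O(\kappa_A T)=\tilde O\!\left(\kappa_A L_f\Delta'/\epsilon^2\right)$, and specializing $\Delta'=\Delta$ (valid when $x_0\in\dom F$, by Theorem \ref{thm:NC-PPA}) yields the second bound. I expect the main obstacle to be the bookkeeping of the logarithmic factors rather than anything structural: one must check that $\log(1/\delta_k)$, with $\delta_k=\sqrt{\Delta'/L_f}/(2k)$ and $k\le T=O(L_f\Delta'/\epsilon^2)$, together with the warm-start radius $D_\star$ that enters Corollary \ref{coro:strongly-convex} only logarithmically, both stay polylogarithmic in the data and in $1/\epsilon$. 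Since $1/\delta_k\le 2T/\sqrt{\Delta'/L_f}$ is polynomial in the problem parameters, this verification is routine once the $O(1)$ conditioning of the regularized subproblem is in place.
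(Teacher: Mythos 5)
Your proof is correct and follows essentially the same route as the paper: bound the outer iteration count by $O(L_f\Delta'/\epsilon^2)$ via Theorem \ref{thm:NC-PPA}, note that the regularized subproblem has $O(1)$ condition number so each inner solve costs $\tilde O(\kappa_A\log(1/\delta_k))$ by Corollary \ref{coro:strongly-convex}, and multiply. (A minor aside: your strong-convexity modulus $L_f$ for $f(x)+L_f\|x-x_{k-1}\|^2$ is the correct one for merely $L_f$-smooth non-convex $f$, whereas the paper states $2L_f$; either way the condition number is $O(1)$ and the conclusion is unaffected.)
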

\begin{proof}
    By Theorem \ref{thm:NC-PPA}, to reach the expected precision, we need $O\left(\frac{L_f\Delta^\prime}{\epsilon^2}\right)$ outer iterations.
    For each $1\leq k\leq T$, the function $f(x)+L_f\|x-x_{k-1}\|^2$ is $2L_f$-strongly-convex and $3L_f$-smooth, and hence its condition number is $O(1)$.
    Hence, the number of gradient evaluations in the $k$-th inner iteration with Algorithm \ref{algo:SC-APPA} is $\tilde O(\kappa_A\log(1/\delta_k))$ by Corollary \ref{coro:strongly-convex}.
    Combining the complexities of inner and outer loops, we obtain the $\tilde{{O}}\left(\frac{\kappa_A L_f\Delta^\prime}{\epsilon^2}\right)$ overall complexity. For the case $x_0\in\dom F$, the complexity can be derived similarly.\qed
\end{proof}

\subsection{Convex case}\label{sec:C-ub}
For any given $x_0$ and $\epsilon>0$, we construct the following auxiliary problem:
\begin{equation}\label{eqn:C-obj}
\min_x\ f(x)+h(Ax-b)+\frac{\epsilon}{2D^2}\|x-x_0\|^2.
\end{equation}
The smooth part $f(x)+\frac{\epsilon}{2D^2}\|x-x_0\|^2$ is strongly convex and hence we can apply Algorithm \ref{algo:SC-APPA} to solve the problem. The following corollary illustrates that the approximate solution of \eqref{eqn:C-obj} is also an approximate solution of the original convex problem and the overall complexity is also optimal.

\begin{corollary}\label{coro:C-upper}
    Suppose that $f(x)$ is convex, the condition number of $A$ is $\kappa_A$ and $\|x_0-x^\star\|\leq D$. For any given $0<\rho<+\infty$, Algorithm \ref{algo:SC-APPA} can be applied on problem \eqref{eqn:C-obj} and output an approximate solution $\hat x$ satisfying $\suboptc(\hat x)\leq\epsilon$, such that the total number of gradient evaluations is bounded by
    \[
    \tilde O\left(\frac{\kappa_A\sqrt{L_f}D}{\sqrt{\epsilon}}\right),
    \]
    where $\tilde O$ also hides the logarithmic dependence on $\rho$.
\end{corollary}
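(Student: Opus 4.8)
The plan is to treat \eqref{eqn:C-obj} as an instance of the composite problem with the \emph{same} $h$ and $A$ but with the strongly convex smooth part $\tilde f(x)\defeq f(x)+\frac{\epsilon}{2D^2}\nrm{x-x_0}^2$; write $\tilde F$ for the resulting objective of \eqref{eqn:C-obj} and $\tilde x^\star$ for its unique minimizer. Since the perturbation has Hessian $\frac{\epsilon}{D^2}I$, the function $\tilde f$ is $\mu\defeq\epsilon/D^2$-strongly convex and $(L_f+\epsilon/D^2)$-smooth, so its condition number is $\kappa_{\tilde f}=O(L_f D^2/\epsilon)$, while $\kappa_A$ is unchanged. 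I would then run Algorithm \ref{algo:SC-APPA} on \eqref{eqn:C-obj} and invoke Corollary \ref{coro:strongly-convex} to produce a point $\hat x$ with $\nrm{\hat x-\tilde x^\star}\le\delta$ for a tolerance $\delta$ fixed later; the cost is $\tilde O\big(\kappa_A\sqrt{\kappa_{\tilde f}}\,\log(1/\delta)\big)=\tilde O\big(\kappa_A\sqrt{L_f}D/\sqrt{\epsilon}\cdot\log(1/\delta)\big)$.

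The heart of the argument is converting the distance bound into $\suboptc(\hat x)\le\epsilon$. I would split
\[
\suboptc(\hat x)=\underbrace{\big[f(\hat x)+h_\rho(A\hat x-b)\big]-\big[f(\tilde x^\star)+h_\rho(A\tilde x^\star-b)\big]}_{(\mathrm{I})}+\underbrace{\big[f(\tilde x^\star)+h_\rho(A\tilde x^\star-b)\big]-F^\star}_{(\mathrm{II})},
\]
where $F^\star\defeq\min_x\{f(x)+h(Ax-b)\}$. For $(\mathrm{II})$, the bound $h_\rho\le h$ from Lemma \ref{lemma:hrho} gives $(\mathrm{II})\le F(\tilde x^\star)-F^\star$, and since the perturbation is nonnegative and $\nrm{\xs-x_0}\le D$, optimality of $\tilde x^\star$ yields $F(\tilde x^\star)\le\tilde F(\tilde x^\star)\le\tilde F(\xs)=F^\star+\frac{\epsilon}{2D^2}\nrm{\xs-x_0}^2\le F^\star+\epsilon/2$; hence $(\mathrm{II})\le\epsilon/2$.

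For $(\mathrm{I})$ I would use local Lipschitz continuity of $G(x)\defeq f(x)+h_\rho(Ax-b)$. Since $h_\rho$ is $\rho$-Lipschitz (Lemma \ref{lemma:hrho}), the subgradients of $G$ along the segment $[\hat x,\tilde x^\star]$ are bounded by $\sup\nrm{\nabla f}+\rho\nrm{A}$, so it remains to control $\nrm{\nabla f(\tilde x^\star)}$. The same perturbation budget together with $\mu$-strong convexity of $\tilde F$ gives $\frac{\mu}{2}\nrm{\xs-\tilde x^\star}^2\le\tilde F(\xs)-\tilde F(\tilde x^\star)\le\epsilon/2$, i.e. $\nrm{\xs-\tilde x^\star}\le D$; then $L_f$-smoothness and the optimality relation $\nabla f(\xs)=-A^\T\lams$ yield $\nrm{\nabla f(\tilde x^\star)}\le\nrm{A}\nrm{\lams}+L_f D$. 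Consequently $G$ is $\ell_G$-Lipschitz near $\tilde x^\star$ with $\ell_G=O(\nrm{A}\nrm{\lams}+L_f D+\rho\nrm{A})$, so $(\mathrm{I})\le\ell_G\,\delta$, and choosing $\delta=\epsilon/(2\ell_G)$ forces $(\mathrm{I})\le\epsilon/2$ and hence $\suboptc(\hat x)\le\epsilon$.

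Finally, with this choice $\log(1/\delta)=O(\log(\ell_G/\epsilon))$ is polylogarithmic in all problem constants, carrying only a logarithmic dependence on $\rho$, so the total cost remains $\tilde O(\kappa_A\sqrt{L_f}D/\sqrt{\epsilon})$, matching the claim. I expect the main obstacle to be precisely this last translation step: because $f$ is only convex (with no global Lipschitz bound) and $\suboptc$ is defined through the surrogate $h_\rho$ rather than $h$, one cannot pass directly from $\nrm{\hat x-\tilde x^\star}$ to a function gap, and the argument hinges on first certifying that $\tilde x^\star$ stays within distance $D$ of $\xs$ so that $\nrm{\nabla f(\tilde x^\star)}$, and thus the local Lipschitz constant of $G$, stays controlled.
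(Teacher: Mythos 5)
Your proposal is correct and follows essentially the same route as the paper: perturb by $\frac{\epsilon}{2D^2}\|x-x_0\|^2$, solve the resulting strongly convex problem to accuracy $\delta$ via Corollary \ref{coro:strongly-convex}, use the optimality of the perturbed minimizer to absorb $\epsilon/2$ of the budget, and convert $\|\hat x-\tilde x^\star\|\leq\delta$ into a function-value gap through the local Lipschitz constant of $f+h_\rho(A\cdot-b)$, which enters only logarithmically. The only (immaterial) differences are that the paper controls the local gradient norm via $\|\tilde x^\star-x_0\|\leq D$ and $\|\nabla f(x_0)\|$ rather than via $\|\tilde x^\star-x^\star\|\leq D$ and $\|A\|\|\lambda^\star\|$, and it caps $\delta$ additionally by $\sqrt{\epsilon/(3L_f)}$ to handle the second-order term $\frac{L_f}{2}\delta^2$ that your choice $\delta=\epsilon/(2\ell_G)$ leaves implicit.
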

\begin{proof}
\newcommand{\xeps}{x_\epsilon^\star}
Denote the exact solution of \eqref{eqn:C-obj} as $x_\epsilon^\star$. 
We apply Algorithm \ref{algo:SC-APPA} on \eqref{eqn:C-obj} and calculate a point $\hx$ such that $\nrm{\hx-\xeps}\leq\delta$, where we will specify $\delta$ later in proof.

By the optimality of $\xeps$, it holds that
\begin{align*}
    F(\xeps)+\frac{\epsilon}{2D^2}\nrm{\xeps-x_0}^2
    \leq
    F(\xs)+\frac{\epsilon}{2D^2}\nrm{\xs-x_0}^2.
\end{align*}
In particular, we have $F(\xeps)\leq F(\xs)+\epsilon/2$, and by the fact that $F(\xs)\leq F(\xeps)$, we know $\nrm{\xeps-x_0}\leq \nrm{\xs-x_0}\leq D$.

On the other hand, we have
\begin{align*}
    f(\hx)
    \leq&~ f(\xeps)+\iprod{\nabla f(\xeps)}{\hx-\xeps}+\frac{L_f}{2}\nrm{\hx-\xeps}^2\\
    \leq&~ f(\xeps)+\paren{\nrm{\nabla f(x_0)}+L_fD}\nrm{\hx-\xeps}+\frac{L_f}{2}\nrm{\hx-\xeps}^2,
\end{align*}
where the last inequality follows from $\nrm{\nabla f(\xeps)-\nabla f(x_0)}\leq L_f\nrm{\xeps-x_0}\leq L_fD$.
Further, since $h_\rho(\cdot)$ is $\rho$-Lipschitz continuous and $h_\rho(\cdot)\leq h(\cdot)$, it holds
\[
h_\rho(A\hx-b)\leq h_{\rho}(A\xeps-b)+\rho\nrm{A\hx-A\xeps} 
\leq h(A\xeps-b)+\rho\LA\nrm{\hx-\xeps}.
\]
Denote $C_{\rho}=\nrm{\nabla f(x_0)}+L_fD+\rho \LA$. Combining the above two inequalities yields
\begin{align*}
f(\hx)+h_\rho(A\hx-b)
\leq &~
f(\xeps)+h(A\xeps-b)+C_{\rho}\nrm{\hx-\xeps}+\frac{L_f}{2}\nrm{\hx-\xeps}^2\\
\leq &~
F(\xeps)+C_\rho\delta+\frac{L_f\delta^2}{2}\\
\leq &~
F(\xs)+\frac{\epsilon}{2}+C_\rho\delta+\frac{L_f\delta^2}{2}.
\end{align*}
Therefore, we can set
\begin{align*}
    \delta=\min\set{\frac{\epsilon}{3C_\rho},\sqrt{\frac{\epsilon}{3L_f}}}
\end{align*}
to ensure that $\suboptc(\hx)\leq \epsilon$.
Notice that the function $f(x)+\frac{\epsilon}{2D^2}\nrm{x-x_0}^2$ is $(L_f+\epsilon/D^2)$-smooth and $(\epsilon/D^2)$-strongly convex. Therefore, according to Corollary \ref{coro:strongly-convex}, the required number of gradient evaluations is $\tilde O\left(\kappa_A\sqrt{\frac{L_fD^2}{\epsilon}+1}\right)$.
\qed
\end{proof}

Note that we give some specific definitions of $h_\rho$ in Property \ref{hrho-cone} and \ref{hrho-Lip}. In the following, we give the complexity results on these specific problems.
\begin{corollary}\label{cor:conic-comp}
For the conic inequality constrained problem \eqref{conic-constrained} and any fixed $\rho>0$, in order to find an approximate solution $x_T$ satisfies
\begin{align*}
    |f(x_T)-f(x^\star)|\leq\epsilon, \qquad
    \|\mathcal{P}_{\mathcal{K}^\circ}(Ax_T-b)\|\leq \frac{\epsilon}{\rho},
\end{align*}
the required number of gradient evaluation is $\tilde O\left(\kappa_AD\sqrt{L_f/\epsilon}\right)$. 
\end{corollary}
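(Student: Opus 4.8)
The plan is to treat \eqref{conic-constrained} as the special case of \eqref{comp-prob} with $h(\cdot)=\mathbb{I}_{\mathcal{K}}(\cdot)$ and then simply quote the general convex upper bound of Corollary \ref{coro:C-upper}. By Part \ref{hrho-cone} of Lemma \ref{lemma:hrho}, for this $h$ the surrogate is $h_\rho(Ax-b)=\rho\nrm{\mathcal{P}_{\mathcal{K}^\circ}(Ax-b)}$; moreover the optimal $\xs$ is feasible, so $\min_{x'}\{f(x')+h(Ax'-b)\}=f(\xs)$. Hence the suboptimality measure collapses to
\[
\suboptc(x)=f(x)-f(\xs)+\rho\nrm{\mathcal{P}_{\mathcal{K}^\circ}(Ax-b)}.
\]
Running Algorithm \ref{algo:SC-APPA} on the perturbed objective \eqref{eqn:C-obj} exactly as in Corollary \ref{coro:C-upper}, but to target accuracy $\epsilon/2$, produces $\hx$ with $\suboptc(\hx)\leq\epsilon/2$ at a cost of $\tilde O\big(\kappa_A\sqrt{L_f}D/\sqrt{\epsilon/2}\big)=\tilde O\big(\kappa_A D\sqrt{L_f/\epsilon}\big)$ gradient evaluations, which is already the claimed complexity.

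It then remains to convert $\suboptc(\hx)\leq\epsilon/2$ into the two displayed guarantees, i.e. to establish the conic analogue of the standard lemma \cite[Lemma 3]{zhu2022unified} recalled after Lemma \ref{lemma:hrho}: whenever $\rho\geq 2\nrm{\lams}$,
\[
\maxop{\abs{f(x)-f(\xs)},\ \rho\nrm{\mathcal{P}_{\mathcal{K}^\circ}(Ax-b)}}\leq 2\suboptc(x).
\]
One direction is immediate, since dropping the nonnegative term gives $f(\hx)-f(\xs)\leq\suboptc(\hx)$. For the reverse inequality I would invoke the KKT system of \eqref{conic-constrained}, namely stationarity $\nabla f(\xs)=-\At\lams$ with $\lams\in\mathcal{K}^\circ$ and complementary slackness $\iprod{\lams}{A\xs-b}=0$. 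Convexity of $f$ then gives $f(x)\geq f(\xs)+\iprod{\nabla f(\xs)}{x-\xs}=f(\xs)-\iprod{\lams}{Ax-b}$, and Moreau's cone decomposition $Ax-b=\mathcal{P}_{\mathcal{K}}(Ax-b)+\mathcal{P}_{\mathcal{K}^\circ}(Ax-b)$ together with $\iprod{\lams}{\mathcal{P}_{\mathcal{K}}(Ax-b)}\leq 0$ (because $\lams\in\mathcal{K}^\circ$) bounds the inner product, yielding
\[
f(\xs)-f(x)\leq\nrm{\lams}\,\nrm{\mathcal{P}_{\mathcal{K}^\circ}(Ax-b)}.
\]

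Combining this reverse bound with $\suboptc(\hx)\leq\epsilon/2$ gives $(\rho-\nrm{\lams})\nrm{\mathcal{P}_{\mathcal{K}^\circ}(A\hx-b)}\leq\epsilon/2$, so under $\rho\geq 2\nrm{\lams}$ we obtain $\rho\nrm{\mathcal{P}_{\mathcal{K}^\circ}(A\hx-b)}\leq\epsilon$, i.e. $\nrm{\mathcal{P}_{\mathcal{K}^\circ}(A\hx-b)}\leq\epsilon/\rho$; substituting this back then yields $f(\xs)-f(\hx)\leq\nrm{\lams}\cdot\epsilon/\rho\leq\epsilon/2$, and together with the easy direction $f(\hx)-f(\xs)\leq\epsilon/2$ this gives $\abs{f(\hx)-f(\xs)}\leq\epsilon$. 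The main obstacle is exactly this two-sided control: an approximate minimizer of the penalized problem may be infeasible with $f(\hx)<f(\xs)$, so the constraint-violation bound cannot be read off from $\suboptc$ alone but genuinely needs the dual multiplier together with the threshold $\rho\geq 2\nrm{\lams}$; this dependence on $\rho$ and $\nrm{\lams}$ is what is absorbed into the $\tilde O$ notation, consistently with the logarithmic $\rho$-dependence already hidden in Corollary \ref{coro:C-upper}. The remaining steps are routine bookkeeping.
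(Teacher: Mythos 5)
Your proposal is correct and follows the same route the paper intends: the corollary is presented there as an immediate consequence of Corollary \ref{coro:C-upper} combined with Part \ref{hrho-cone} of Lemma \ref{lemma:hrho} and the two-sided bound $\maxop{\abs{f(x)-f(\xs)},\rho\nrm{\mathcal{P}_{\mathcal{K}^\circ}(Ax-b)}}\leq 2\suboptc(x)$ for $\rho\geq 2\nrm{\lams}$, which the paper only cites (as \cite[Lemma 3]{zhu2022unified}) but you prove in full via KKT and the Moreau cone decomposition. Your observation that the threshold $\rho\geq 2\nrm{\lams}$ is genuinely needed and is absorbed into the $\tilde O$ matches the paper's (implicit) treatment.
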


Corollary \ref{cor:conic-comp} implies that for conic inequality constrained convex problems (including linearly constrained convex problems), the constraint can be fulfilled to arbitrary accuracy without affecting the order of the complexity (up to log factor).

\begin{corollary}
When $h(\cdot)$ is $\rho$-Lipschitz continuous (e.g., the norm regularized problem \eqref{regularized-prob}), in order to find an approximate solution $x_T$ satisfies $F(x_T)-\min_x F(x)\leq\epsilon$, the required number of gradient evaluation is $\tilde O\left(\kappa_AD\sqrt{L_f/\epsilon}\right)$.
\end{corollary}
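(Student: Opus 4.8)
The plan is to reduce this statement to Corollary~\ref{coro:C-upper} by observing that the Lipschitz hypothesis renders the surrogate $h_\rho$ exact. Concretely, by Lemma~\ref{lemma:hrho}, part~\ref{hrho-Lip}, when $h$ is $\rho$-Lipschitz continuous we have $h_\rho(\cdot)\equiv h(\cdot)$. Substituting this identity into the definition of the convex suboptimality measure gives
\begin{equation*}
\suboptc(x)=f(x)+h_\rho(Ax-b)-\min_{x'}\set{f(x')+h(Ax'-b)}=F(x)-\min_{x'}F(x'),
\end{equation*}
so that $\suboptc$ coincides exactly with the standard objective gap for this class of problems. Thus the convergence guarantee we already possess in the convex case automatically translates into a guarantee on $F(x)-\min_x F(x)$.

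With this identification in hand, I would simply invoke Corollary~\ref{coro:C-upper} with the given Lipschitz constant $\rho$: running Algorithm~\ref{algo:SC-APPA} on the strongly convex perturbation \eqref{eqn:C-obj} produces a point $\hx$ with $\suboptc(\hx)\leq\epsilon$ using $\tilde O(\kappa_A\sqrt{L_f}D/\sqrt\epsilon)$ gradient evaluations. By the displayed identity, this same $\hx$ satisfies $F(\hx)-\min_x F(x)\leq\epsilon$, which is precisely the required guarantee, and the complexity order is inherited verbatim.

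I do not anticipate a genuine obstacle here, since the result is essentially an instantiation of the general convex bound once the surrogate is known to be exact. The only point deserving care is the hidden dependence on $\rho$: in the proof of Corollary~\ref{coro:C-upper} the inner tolerance $\delta$ is selected in terms of $C_\rho=\nrm{\nabla f(x_0)}+L_fD+\rho\LA$, so $\rho$ enters the complexity only through a logarithmic factor. Because $\rho$ is a fixed constant (for the norm regularized problem \eqref{regularized-prob} one may take any $\rho$ with $\rho\nrm{x}^*\geq\nrm{x}$, e.g.\ $\rho=1$ when $h=\nrm{\cdot}$), this leaves the stated $\tilde O(\kappa_A\sqrt{L_f}D/\sqrt\epsilon)$ order unchanged. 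I would close by noting that the requirement $\rho\in(0,+\infty)$ in the definition~\eqref{def:hrho} of $h_\rho$ is automatically satisfied by any finite Lipschitz constant, so no additional case analysis is needed.
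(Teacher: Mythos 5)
Your proposal is correct and follows exactly the route the paper intends: the corollary is stated without a separate proof precisely because it is the immediate instantiation of Corollary~\ref{coro:C-upper} once Lemma~\ref{lemma:hrho}, part~\ref{hrho-Lip}, gives $h_\rho\equiv h$ and hence $\suboptc(x)=F(x)-\min_{x'}F(x')$. Your added remark that $\rho$ enters the bound only logarithmically through $C_\rho$ matches the paper's own caveat that $\tilde O$ hides the logarithmic dependence on $\rho$.
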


\section{Lower bounds}\label{sec:lower-bound}

\subsection{Problem classes and algorithm class}\label{sec:settings}
In this section, we aim to construct three hard instances for the strongly convex, convex, and non-convex cases, respectively. First, let us formally define the problem classes and the linear span first-order algorithm class. For the simplicity of presentation, we construct the hard instances with $x_0=0$ and $L_A=2$.\vspace{0.2cm}

\noindent\textbf{Strongly convex problem class.}\, For positive constants $L_f\geq\mu_f>0$, $D,\kappa_A>0$, the problem class $\mathcal{F}_{\mathrm{SC}}(L_f,\mu_f,\kappa_A,D)$ includes problems in which $f(x)$ is $L_f$-smooth and $\mu_f$-strongly convex, $\|x_0-x^\star\|\leq D$, 
and the condition number of $A$ is upper bounded by $\kappa_A$.\vspace{0.2cm} 

\noindent\textbf{Non-convex problem class.}\, For positive constants $L_f,\kappa_A,\Delta>0$ and $x_0\in\dom F$, the problem class $\mathcal{F}_{\mathrm{NC}}(L_f,\Delta,\kappa_A)$ includes problems where $f(x)$ is $L_f$-smooth, $F(x_0)-F(x^\star)\leq\Delta$ and the condition number of $A$ is upper bounded by $\kappa_A$. \vspace{0.2cm}

\noindent\textbf{Convex problem class.}\, For positive constants $L_f,D,\kappa_A>0$, the problem class $\mathcal{F}_{\mathrm{C}} (L_f,\kappa_A, D)$ includes problems in which $f(x)$ is $L_f$-smooth, $\|x_0-x^\star\|\leq D$, and the condition number of $A$ is upper bounded by $\kappa_A$. \vspace{0.2cm}

For the above three problem classes, we restrict our discussion to first-order linear span algorithms. The results can be extended to the general first-order algorithms without linear span structure by the orthogonal invariance trick proposed in \cite{carmon2020lower}.\vspace{0.2cm}

\noindent\textbf{First-order linear span algorithms.}\, The iterate sequence $\{(x_k,\lambda_k)\}$ is generated such that $(x_k,\lambda_k)\in\mathcal{S}^x_{k+1}\times\mathcal{S}^\lambda_{k+1}$. These subspaces are generated by starting with $\mathcal{S}^x_0=\mathrm{Span}\{x_0\}$, $\mathcal{S}^\lambda_0=\mathrm{Span}\{\lambda_0\}$ and
\begin{equation*}
    \begin{split}
     \mathcal{S}^x_{k+1} & \!\defeq\!\mathrm{Span}\left\{x_i,\nabla f(\hat x_i), A^\T \hat\lambda_i: \forall \hat x_i\in\mathcal{S}_i^x, \hat\lambda_i\in\mathcal{S}_i^\lambda, 0\leq i\leq k\right\},\\
    \mathcal{S}^\lambda_{k+1}& \!\defeq\!\mathrm{Span}\left\{\lambda_i,\prox_{\eta_i h^\star}\left(\hat \lambda_i + \eta_i(A\hat x_j-b)\right): \forall \hat x_i\in\mathcal{S}_i^x, \hat\lambda_i\in\mathcal{S}_i^\lambda, 0\leq i\leq k\right\}.
    \end{split}
\end{equation*}
If we assume that $\lambda_0=0$, then for the linear equality constrained problem \eqref{linear-equality}, $\prox_{\eta_i h^\star}(x)=x$ and the algorithm class degenerates into
\[ 	
    x_{k+1}\in\mathcal{S}^x_{k+1}=\mathrm{Span}\left\{x_i,\nabla f(\hat x_i), A^\T(A\hat x_i-b), ~\forall \hat x_i\in\mathcal{S}_i^x, 0\leq i\leq k\right\}.
\]
We can further assume $x_0=0$ without loss of generality, otherwise, we can consider the shifted problem $\min_{x} F(x-x_0)$.

Note that for a first-order linear span algorithm, it is not necessary to use the current gradient in each iteration. Instead, it can use any combination of points from the historical search space. This makes the algorithm class general enough to cover diverse iteration schemes. To give some specific examples, we present following single-loop and double-loop algorithms covered under the considered algorithm class. 
\begin{example}[Single-loop algorithms]
Consider problem \eqref{linear-equality} with $h(\cdot)=\mathbb{I}_{\{0\}}(\cdot)$, the Chambolle-Pock method \cite{chambolle2011first}, the OGDA method \cite{mokhtari2020convergence} and the linearized ALM \cite{xu2021first} update iterates by the following rules
\begin{equation}\tag{Chambolle-Pock}
\begin{split}
    \left\{\begin{array}{l}
    x_{k+1}=x_k-\eta_1 \left(\nabla f(x_k)+A^\T \lambda_k\right)\\
    \lambda_{k+1}=\lambda_k+\eta_2 (2Ax_{k+1}-Ax_k-b)
    \end{array}\right.
\end{split}
\end{equation}
\begin{equation}\tag{OGDA}
\begin{split}
    \left\{\begin{array}{l}
    x_{k+1}=x_k-\eta_1 \left(2\nabla f(x_k)-\nabla f(x_{k-1})+A^\T (2\lambda_k-\lambda_{k-1})\right)\\
    \lambda_{k+1}=\lambda_k+\eta_2 (2Ax_k-Ax_{k-1}-b)
    \end{array}\right.
\end{split}
\end{equation}
\begin{equation}\tag{Linearized ALM}
\begin{split}
    \left\{\begin{array}{l}
    x_{k+1}=x_k-\eta_1 \left(\nabla f(x_k)+A^\T \lambda_k+\rho A^\T(Ax_{k}-b)\right)\\
    \lambda_{k+1}=\lambda_k+\eta_2 (Ax_{k+1}-b)
    \end{array}\right.
\end{split}
\end{equation}
where $\rho>0$ is a penalty factor.
\end{example}
For problem class $\mathcal{F}_{SC}(L_f,\mu_f,\kappa_A)$, a unified analysis on the above three methods was provided in \cite{zhu2022unified}, and an $O\left((\kappa_f+\kappa_A^2)\log\left(\frac1\epsilon\right)\right)$ complexity is achieved.
\begin{example}[Double-loop algorithm \cite{xu2021iteration}] Consider problem \eqref{linear-equality} with $h(\cdot)=\mathbb{I}_{\{0\}}(\cdot)$. Let $\mathcal{L}_\rho(x,\lambda)=f(x)+\lambda^\T(Ax-b)+\frac\rho2\|Ax-b\|^2$ be the augmented Lagrangian function. ALM generates iterates by
\[
\left\{\begin{array}{l}
x_{k+1}\approx\argmin_x \mathcal{L}_\rho(x,\lambda_k) \\
\lambda_{k+1}=\lambda_k+\rho (Ax_{k+1}-b)
\end{array}\right.
\]
where the subproblem is solved by an inner loop of Nesterov's AGD method.
An $O(\epsilon^{-1})$ complexity for convex problems and an $O(\epsilon^{-\frac12})$ complexity for strongly convex problems are derived in \cite{xu2021iteration}.
\end{example}
\begin{remark}
It can be checked that all three algorithms proposed in the upper bound section belong to the defined first order linear span algorithm class for general $h$.
\end{remark}

\subsection{The construction of hard instance}

In this section, we construct the hard instances for any first-order linear span algorithms. Specifically, we consider the linear equality constrained problem \eqref{linear-equality} with $h(\cdot)=\mathbb{I}_{\{0\}}(\cdot)$. For positive integers $N$ and $d$, we define the following problem
\begin{align}\label{eqn:OBJ}
\begin{aligned}
\min_{x\in\RR^{2Nd}}&~ f_0(x)\defeq G(x[1],x[N+1])+\cdots+G(x[N],x[2N]), \\
\st&~ x[1]=x[2]=\cdots=x[2N],
\end{aligned}
\end{align}
where $x[1],\cdots,x[2N]\in\RR^d$ and $x\in\RR^{2Nd}$ is the vector that stacks $x[i]$ together in order, the component function $G(u,v):\RR^{d}\times\RR^{d}\mapsto\RR$ is a smooth function to be determined later.
To ensure that $f_0(x)$ satisfies the assumptions of different problem classes, we will construct various formulations of $G(u,v)$ in the strongly convex, convex, and non-convex cases, respectively. Additionally, we require  $G(u,v)$ to satisfy the following assumption.

\begin{assumption}\label{zero-chain}
For any $i\geq 0$, it holds
\begin{enumerate}
    \item If $\supp\left\{u\right\}\subset[i+1], \supp\left\{v\right\}\subset[i]$, then $\supp \left\{\nabla_u G(u,v)\right\}\subset [i+1]$ and $\supp\left\{\nabla_v G(u,v)\right\}\subset [i]$.
    \item If $\supp\left\{u\right\}\subset[i], \supp\left\{v\right\}\subset[i]$, then $\supp\left\{\nabla_u G(u,v)\right\}\subset [i+1]$ and $\supp\left\{\nabla_v G(u,v)\right\}\subset [i]$.
\end{enumerate}
\end{assumption}
The constraint in \eqref{eqn:OBJ} can be rewritten as $Ax=0$ with
    \begin{equation}\label{def:A}
    A=\left[\begin{array}{ccccc}
    I_d & -I_d & & & \\
    & \ddots & \ddots & & \\
    & & I_d & -I_d & \\
    & & & I_d & -I_d
    \end{array}\right]\in \mathbb{R}^{(2N-1)d \times 2Nd}.
    \end{equation}
Hence $AA^\T\in \mathbb{R}^{(2N-1)d \times (2N-1)d}$ and $A^\T A\in \mathbb{R}^{2Nd \times 2Nd}$ can be computed as
    \begin{equation}
        AA^\T=\left[\begin{array}{ccccc}
        2I_d & -I_d & & & \\
        -I_d & 2I_d & -I_d & & \\
        & \ddots & \ddots & \ddots & \\
        & & -I_d & 2I_d & -I_d \\
        & & & -I_d & 2I_d
        \end{array}\right],
    ~ A^\T A = \left[\begin{array}{ccccc}
        I_d & -I_d & & & \\
        -I_d & 2I_d & -I_d & & \\
        & \ddots & \ddots & \ddots & \\
        & & -I_d & 2I_d & -I_d \\
        & & & -I_d & I_d
        \end{array}\right].
    \end{equation}

\begin{lemma}\label{lemma:kappaA}
    For matrix $A$ defined in \eqref{def:A}, its condition number satisfies $\kappa_A\!\leq\! \sqrt{2N^2\!-\!1}$.
\end{lemma}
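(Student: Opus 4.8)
The plan is to compute the nonzero singular values of $A$ exactly by exploiting the Kronecker structure of $AA^\T$, and then reduce the claimed bound to an elementary trigonometric inequality. Since $A$ has full row rank, its $(2N-1)d$ singular values are all positive and equal to the square roots of the eigenvalues of $AA^\T$, so that $\kappa_A^2 = \lambda_{\max}(AA^\T)/\lambda_{\min}(AA^\T)$. From the explicit form of $AA^\T$ displayed above, one reads off that $AA^\T = T \otimes I_d$, where $T\in\RR^{(2N-1)\times(2N-1)}$ is the scalar tridiagonal matrix with $2$ on the diagonal and $-1$ on the two off-diagonals. Hence the spectrum of $AA^\T$ coincides, up to multiplicity $d$, with that of $T$, and the problem is reduced to a purely scalar tridiagonal matrix.

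The first key step is to invoke the classical eigenvalue formula for $T$: its eigenvalues are $\lambda_k = 2-2\cos\frac{k\pi}{2N} = 4\sin^2\frac{k\pi}{4N}$ for $k=1,\dots,2N-1$. This immediately gives $\lambda_{\min}(AA^\T)=4\sin^2\frac{\pi}{4N}$ (at $k=1$) and $\lambda_{\max}(AA^\T)=4\sin^2\frac{(2N-1)\pi}{4N}=4\cos^2\frac{\pi}{4N}$ (at $k=2N-1$), so that
\[
\kappa_A^2 = \cot^2\frac{\pi}{4N}.
\]
As a sanity check, $\|A\| = 2\cos\frac{\pi}{4N}\le 2$, which is consistent with the normalization $L_A=2$ used in the construction of the hard instance.

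It then remains to prove the trigonometric inequality $\cot^2\frac{\pi}{4N}\le 2N^2-1$. Setting $\theta=\frac{\pi}{4N}\in(0,\pi/4]$ and using $2N^2=\frac{\pi^2}{8\theta^2}$, this is equivalent, after adding $1$ to both sides, to $\csc^2\theta\le\frac{\pi^2}{8\theta^2}$, i.e.\ to $\sin\theta\ge\frac{2\sqrt2}{\pi}\theta$ on $(0,\pi/4]$. This last bound follows from the standard monotonicity fact that $\theta\mapsto\frac{\sin\theta}{\theta}$ is decreasing on $(0,\pi)$: its minimum over $(0,\pi/4]$ is attained at $\theta=\pi/4$, where it equals $\frac{\sin(\pi/4)}{\pi/4}=\frac{2\sqrt2}{\pi}$. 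Taking positive square roots (legitimate since $\sin\theta>0$ on this interval) then closes the argument, and the boundary case $N=1$ shows the bound is attained with equality.

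The step I expect to be the main obstacle is precisely this final reduction, because it must be carried out \emph{through the ratio} rather than by bounding the two extreme singular values separately. A naive estimate such as $\|A\|\le 2$ together with a crude lower bound on the smallest singular value is not sharp enough: since $\kappa_A=\sqrt{2N^2-1}$ holds with equality at $N=1$, any slack introduced in separately estimating $\lambda_{\max}$ and $\lambda_{\min}$ destroys the constant. The delicate point is therefore to keep the exact expression $\kappa_A^2=\cot^2\frac{\pi}{4N}$ and to verify that the relevant range of $\theta$ is exactly $(0,\pi/4]$ (guaranteed by $N\ge 1$), which is what makes the monotonicity of $\sin\theta/\theta$ deliver the claimed constant $2N^2-1$ tightly.
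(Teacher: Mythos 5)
Your proof is correct and follows essentially the same route as the paper: both identify the spectrum of the block tridiagonal Toeplitz matrix $AA^\T$ (your $4\sin^2\frac{k\pi}{4N}$ is the same set as the paper's $2+2\cos\frac{k\pi}{2N}$), take the ratio of the extreme eigenvalues to get $\kappa_A^2=\cot^2\frac{\pi}{4N}$, and finish with an elementary trigonometric bound. The only cosmetic difference is the final inequality: you use the monotonicity of $\sin\theta/\theta$ on $(0,\pi/4]$, while the paper uses $\cos\left(\frac{\pi x}{2}\right)\leq 1-x^2$ on $[0,1]$; the two are equivalent and both are tight at $N=1$.
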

\begin{proof}
Note that $T = AA^\T$ is a block tridiagonal Toeplitz matrix and its eigenvalue is $2+2\cos\left(\frac{\pi i}{2N}\right), 1\leq i\leq 2N-1$ (see \cite{noschese2013tridiagonal}).
Accordingly, the condition number of $T$ satisfies
\[
    \kappa_T = \frac{2+2\cos\left(\frac{\pi}{2N}\right)}{2+2\cos\left(\frac{\pi(2N-1)}{2N}\right)} = \frac{1+\cos\left(\frac{\pi}{2N}\right)}{1-\cos\left(\frac{\pi}{2N}\right)} = 1+\frac{2}{\frac{1}{\cos\left(\frac{\pi}{2N}\right)}-1}\leq 2N^2-1,
\]
where the last inequality is due to $\cos\left(\frac{\pi x} 2 \right)\leq1-x^2$, $\forall x\in\left[0,1\right]$.
Consequently, $\kappa_A\leq \sqrt{2N^2-1}$. \qed
\end{proof}

Next, we demonstrate the propagation of non-zero entries in this example. 
For first-order linear span algorithm with $b=0$, we have $$\mathcal{S}_{k+1}\subset \spa\left\{\mathcal{S}_{k}\cup \left\{\nabla f_0(\hat x_k), A^\T A \hat x_k\right\}\right\}.$$
It implies that new non-zero entries are introduced either through $\nabla G(x[i],x[N+i])$, or through the action of $A^\T A$ on $x$. 
As $A^\T A$ is a block tridiagonal matrix, each action of $A^\T A$ enables entries in $x[i]$ to "communicate" with their neighboring vectors, thereby propagating the non-zero entries.

Figure \ref{nozero-entry} illustrates how non-zero entries propagate. 
Assume that the initial point is $(x[i])_j=0$ for all $i$ and $j$.
In the first iteration, we use Assumption \ref{zero-chain} to observe that $\supp \left\{\nabla_u G(x[j],x[N+j])\right\}\subset [1], 1\leq j\leq N$ and $\supp \left\{\nabla_v G(x[j],x[N+j])\right\}=\emptyset$, so it is only possible to have $(x[1:N])_1\neq 0$. 
In the second iteration, $\nabla G(x[i],x[N+i])$ does not introduce any new non-zero entries, but the action of $A^\T A$ on $x$ causes $(x[N+1])_1$ to receive a non-zero entry from $(x[N])_1$. 
In the third iteration, we have $\nabla_u G(x[1],x[N+1])\subset [2]$, which allows $(x[1])_2$ to become non-zero. 
Additionally, $A^\T A$ propagates the non-zero entry in $(x[N+1])_1$ to $(x[N+2])_1$.
By repeating the above propagation mechanism, we can see that by the $(N+1)$th iteration, both $(x[1:2N])_1$ and $(x[1:N-1])_2$ become nonzero.
In the $(N+2)$th iteration, $(x[N])_2$ becomes nonzero through $\nabla_u G(x[N],x[2N])\subset [2]$.
We can consider iterations $2$ to $N+2$ (which consist of $N+1$ iterations) as one complete round of iterations. 
By repeating this process, each round of iteration can convert up to $2N$ elements to nonzero.
After $i-2$ rounds of iteration, specifically at the $((i-1)(N+1)+1)$th iteration, $(x[1:2N])_{1:i-1}$ and $(x[1:N])_i$ become possibly nonzero.

\begin{figure}[!ht]
\centering
\begin{tikzpicture}[
    roundnode/.style={circle, draw=green!60, fill=green!5, very thick, minimum size=7mm},
    squarednode/.style={rectangle, draw=gray!60, fill=gray!5, very thick, minimum size=5mm},
    ]
    \node[squarednode]      (1_1)                              {$(1,1)$};
    \node[squarednode]      (2_1)       [right=1cm of 1_1]     {$(2,1)$};
    \node[squarednode]      (N_1)       [right=1.5cm of 2_1]     {$(N,1)$};
    \node[squarednode]      (N+1_1)     [right=1cm of N_1]     {($N$+1, 1)};
    \node[squarednode]      (N+2_1)     [right=1cm of N+1_1]   {($N$+2, 1)};
    \node[squarednode]      (2N_1)      [right=1.5cm of N+2_1]   {($2N$, 1)};

    \node[squarednode]      (1_2)       [below=2cm of 1_1]     {$(1,2)$};
    \node[squarednode]      (2_2)       [below=2cm of 2_1]     {$(2,2)$};
    \node[squarednode]      (N_2)       [below=2cm of N_1]     {$(N,2)$};
    \node[squarednode]      (N+1_2)     [below=2cm of N+1_1]   {($N$+1, 2)};
    \node[squarednode]      (N+2_2)     [below=2cm of N+2_1]   {($N$+2, 2)};
    \node[squarednode]      (2N_2)      [below=2cm of 2N_1]    {($2N$, 2)};

    \node[squarednode]      (1_3)       [below=2cm of 1_2]     {$(1,3)$};
    \node[squarednode]      (2_3)       [below=2cm of 2_2]     {$(2,3)$};
    \node[squarednode]      (N_3)       [below=2cm of N_2]     {$(N,3)$};
    \node[squarednode]      (N+1_3)     [below=2cm of N+1_2]   {($N$+1, 3)};
    \node[squarednode]      (N+2_3)     [below=2cm of N+2_2]   {($N$+2, 3)};
    \node[squarednode]      (2N_3)      [below=2cm of 2N_2]    {($2N$, 3)};

    \node[right=0.5cm of 2_1]   (dots) {$\cdots$};
    \node[above right =0.6cm and 0.9cm of 1_1]   (11){};
    \node[above right =0.6cm and 0.9cm of 2_1]   (21){};
    \node[above right =0.6cm and 0.9cm of N_1]   (N1){};
    \draw[thick,blue!60,->]             (11.south) -- node [midway,sloped,above]{$1$} (1_1.north);
    \draw[thick,blue!60,->]             (21.south) -- node [midway,sloped,above]{$1$} (2_1.north);
    \draw[thick,blue!60,->]             (N1.south) -- node [midway,sloped,above]{$1$} (N_1.north);

    \draw[thick,orange,->]             (N_1.east) -- node [midway,above]{$2$} (N+1_1.west);
    \draw[thick,orange,->]             (N+1_1.east) -- node [midway,above]{$3$} (N+2_1.west);
    \draw[thick,orange,dashed,->]      (N+2_1.east) -- (2N_1.west);

    \draw[thick,orange,->]             (1_2.east) -- node [midway,above]{$4$} (2_2.west);
    \draw[thick,orange,dashed,->]      (2_2.east) -- (N_2.west);
    \draw[thick,orange,->]             (N_2.east) -- node [midway,above]{$N$+$3$} (N+1_2.west);
    \draw[thick,orange,->]             (N+1_2.east) -- node [midway,above]{$N$+$4$} (N+2_2.west);
    \draw[thick,orange,dashed,->]      (N+2_2.east) -- (2N_2.west);

    \draw[thick,orange,->]             (1_3.east) -- node [midway,above]{$N$+$5$} (2_3.west);
    \draw[thick,orange,dashed,->]      (2_3.east) -- (N_3.west);
    \draw[thick,orange,->]             (N_3.east) -- node [midway,above]{$2N$+$4$} (N+1_3.west);
    \draw[thick,orange,->]             (N+1_3.east) -- node [midway,above]{$2N$+$5$} (N+2_3.west);
    \draw[thick,orange,dashed,->]      (N+2_3.east) -- (2N_3.west);

    \draw[thick,blue!60,->]             (N+1_1.south) -- node [midway,sloped,above]{$3$} (1_2.north);
    \draw[thick,blue!60,->]             (N+2_1.south) -- node [midway,sloped,above]{$4$} (2_2.north);
    \draw[thick,blue!60,->]             (2N_1.south) -- node [midway,sloped,above]{$N$+$2$} (N_2.north);

    \draw[thick,blue!60,->]             (N+1_2.south) -- node [midway,sloped,above]{$N$+$4$} (1_3.north);
    \draw[thick,blue!60,->]             (N+2_2.south) -- node [midway,sloped,above]{$N$+$5$} (2_3.north);
    \draw[thick,blue!60,->]             (2N_2.south) -- node [midway,sloped,above]{$2N$+$3$} (N_3.north);

    \node[below left =1cm and 1.8cm of N+1_3]   (N+13){};
    \node[below left =1cm and 1.8cm of N+2_3]   (N+23){};
    \node[below left =1cm and 1.8cm of 2N_3]    (2N3){};
    \draw[thick,blue!60,->]             (N+1_3.south) -- node [midway,sloped,above]{$2N$+$5$} (N+13.north);
    \draw[thick,blue!60,->]             (N+2_3.south) -- node [midway,sloped,above]{$2N$+$6$} (N+23.north);
    \draw[thick,blue!60,->]             (2N_3.south) -- node [midway,sloped,above]{$3N$+$4$} (2N3.north);

    \node[below =1.5cm of 1_3, rotate=90]   (dots13){$\cdots$};
    \node[below =1.5cm of 2_3, rotate=90]   (dots23){$\cdots$};
    \node[below =1.5cm of N_3, rotate=90]   (dotsN3){$\cdots$};
    \node[below =1.5cm of N+1_3, rotate=90]   (dotsN+13){$\cdots$};
    \node[below =1.5cm of N+2_3, rotate=90]   (dotsN+23){$\cdots$};
    \node[below =1.5cm of 2N_3, rotate=90]   (dots2N3){$\cdots$};
\end{tikzpicture}
\caption{Propagation of nonzero entries. 
In this figure, the pair $(i,j)$ represents the entry $(x[i])_j$. 
The propagation is indicated by blue arrows when passing through $\nabla f_0(x)$, 
and by orange arrows when passing through $A^\T Ax$. 
The number of iteration is placed above the arrows.}
\label{nozero-entry}
\end{figure}
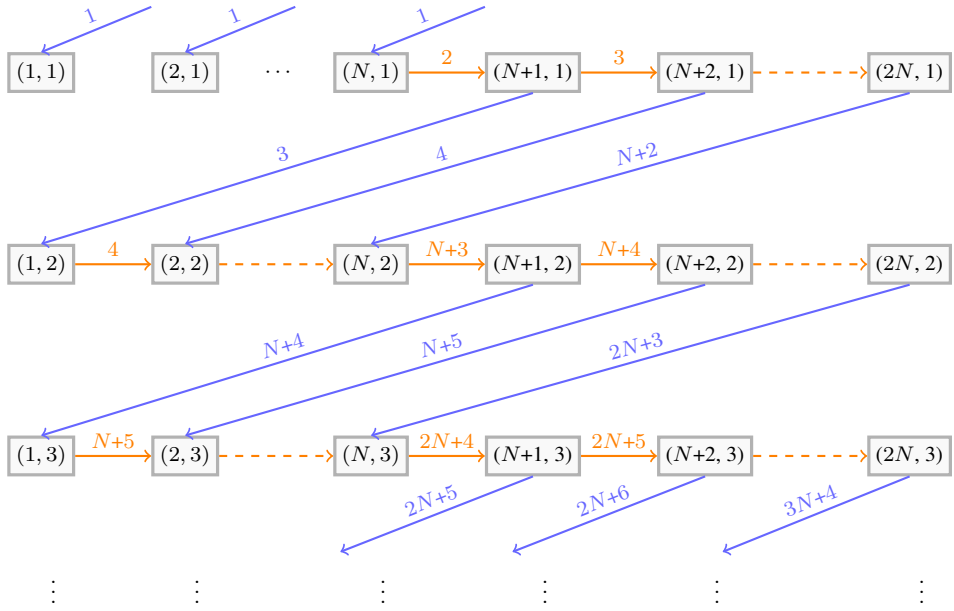

Based on the above procedure, it is natural to obtain the following Lemma.

\begin{lemma}\label{lemma:index}
    For $k=(i-1)(N+1)+j$ with $1\leq i\leq d-1, 1\leq j\leq N+1$, we have
\begin{align*}
    \supp\left\{x_k\right\}\subset \left\{(1:2N,1:i-1)\cup (1:N+j-1, i) \cup (1:j-2, i+1)\right\},
\end{align*}
    where the pair $(i,j)$ represents the entry $(x[i])_j$.
    Therefore, for any $k>0$, let $K=\left\lfloor \frac{k-2}{N+1} \right\rfloor+1$, then $(x[i])_j = 0$ for any $i,j$ satisfy $N+1\leq i\leq2N, K+1\leq j\leq d$.
\end{lemma}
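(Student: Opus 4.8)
The plan is to prove, by induction on $k$, a slightly stronger statement about the entire linear‑span subspace rather than the single iterate. Writing $S_{i,j}\defeq (1{:}2N,1{:}i-1)\cup(1{:}N+j-1,i)\cup(1{:}j-2,i+1)$ for the claimed index set and $V_{i,j}$ for the coordinate subspace of all vectors supported on $S_{i,j}$, I would show $\mathcal{S}^x_k\subset V_{i,j}$ whenever $k=(i-1)(N+1)+j$ with $1\le j\le N+1$. Since $V_{i,j}$ is a coordinate subspace it is closed under linear combination, so it suffices to bound the support of the two generators $A^\T A\hat x$ and $\nabla f_0(\hat x)$ for an arbitrary $\hat x\in V_{i,j}$, after which $x_k\in\mathcal{S}^x_k\subset V_{i,j}$ is immediate. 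It helps to record, for a support set, the block‑frontier $m_c$ of each coordinate $c$ (the largest block index on which $c$ may be nonzero), so that $S_{i,j}$ reads $m_c=2N$ for $c\le i-1$, $m_i=N+j-1$, $m_{i+1}=j-2$, and $m_c=0$ otherwise. I would then isolate the two propagation rules: since $A^\T A$ is block tridiagonal in the $d\times d$ blocks it preserves the coordinate index and advances each frontier by at most one ($1{:}m\mapsto 1{:}(m+1)$); and since $f_0=\sum_{l=1}^N G(x[l],x[N+l])$ the gradient is block‑local, with block $l$ equal to $\nabla_u G(\hat x[l],\hat x[N+l])$ for $1\le l\le N$ and block $N+l$ equal to $\nabla_v G(\hat x[l],\hat x[N+l])$, Assumption \ref{zero-chain} converting the coordinate supports of $\hat x[l],\hat x[N+l]$ into those of the gradient blocks. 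The two facts I extract are that the gradient never spreads a coordinate across blocks and that $\nabla_v G$ never raises the coordinate support of a $v$‑block; this is exactly why the $v$‑blocks lag the $u$‑blocks by one full round.

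For the base case $k=1$ (so $i=j=1$) one has $A^\T A x_0=0$, while part 2 of Assumption \ref{zero-chain} with $p=0$ gives $\supp\{\nabla_u G\}\subset[1]$ and $\supp\{\nabla_v G\}=\emptyset$, so $\mathcal{S}^x_1$ is supported on blocks $1{:}N$, coordinate $1$, i.e.\ $V_{1,1}$. For the inductive step I would split on the round. In the within‑round case $1\le j\le N$ (target $(i,j+1)$), reading off the frontiers of $S_{i,j}$ shows the pair $(\hat x[l],\hat x[N+l])$ meets the hypotheses of part 2 of Assumption \ref{zero-chain} exactly at the single block $l=j-1$ and of part 1 otherwise; in either form $\nabla_u G$ acquires coordinate $i+1$ precisely for $l\le j-1$, while $\nabla_v G$ stays within $[i]$. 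Taking the union of $S_{i,j}$, its $A^\T A$‑spread (which moves $m_i$ from $N+j-1$ to $N+j$ and $m_{i+1}$ from $j-2$ to $j-1$), and the gradient contribution (coordinate $i+1$ up to block $j-1$) reproduces $S_{i,j+1}$ frontier by frontier, with no coordinate $i+2$ created because the gradient advances the coordinate index by at most one. In the round‑boundary case $j=N+1$ (target $(i+1,1)$) the same reading places the part‑2 block at $l=N$, so coordinate $i+1$ fills all of blocks $1{:}N$, matching $m_{i+1}=N$ in $S_{i+1,1}$. This closes the induction over $1\le i\le d-1$.

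The final assertion is then a direct corollary. From $S_{i,j}$ the $v$‑blocks $N+1,\dots,2N$ never carry coordinate $i+1$ (its frontier $j-2\le N-1$ lies below $N+1$) and carry coordinate $i$ only when $N+j-1\ge N+1$, i.e.\ $j\ge2$; hence the largest coordinate possibly nonzero on the $v$‑blocks is $i$ if $j\ge2$ and $i-1$ if $j=1$. Evaluating $K=\lfloor(k-2)/(N+1)\rfloor+1$ from $k=(i-1)(N+1)+j$ gives $K=i$ when $j\ge2$ and $K=i-1$ when $j=1$, so in both cases this maximal $v$‑block coordinate is at most $K$, which is precisely the statement that $(x[i])_j=0$ for $N+1\le i\le2N$ and $K+1\le j\le d$ (the claim being vacuous once $K\ge d$, and for $k>(d-1)(N+1)$ following from the same propagation once one notes that no coordinate can exceed $d$).

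I expect the main obstacle to be the careful bookkeeping in the inductive step: deciding, for each of the $N$ component functions separately, which part of Assumption \ref{zero-chain} governs $(\hat x[l],\hat x[N+l])$, and then verifying that the union of the $A^\T A$‑spread and the gradient‑advance matches the target frontiers \emph{exactly} rather than overshooting (in particular that coordinate $i+2$ is never produced). The conceptual point behind the period $N+1$ is that, within one round, the leading coordinate must be carried by $A^\T A$ across the $N$ blocks separating the $u$‑side from the far end of the chain, together with one additional gradient step at block $N$ that seeds the next coordinate.
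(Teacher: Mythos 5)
Your proposal is correct and follows essentially the same approach as the paper: the paper only argues this lemma informally via the propagation picture (Figure \ref{nozero-entry}) preceding the statement, and your induction on $k$ with the per-coordinate block-frontier bookkeeping — $A^\T A$ advancing each frontier by one block, Assumption \ref{zero-chain} advancing the coordinate index only at the single ``part 2'' block $l=j-1$ — is a faithful formalization of that same argument. The case analysis and the evaluation of $K$ in the final assertion both check out.
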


It is well known that the complexity lower bound of the strongly convex case for an unconstrained smooth problem is $\Omega(\sqrt{\kappa_f}\log(1/\epsilon))$ \cite{nesterov2018lectures}, the lower bound of the convex case is $\Omega\left(\sqrt{L_f}D/\sqrt{\epsilon}\right)$ \cite{nesterov2018lectures} and the lower bound of non-convex case is $\Omega(L_f\Delta/\epsilon^2)$ \cite{carmon2020lower}.
These papers construct hard instances with zero-chaining property, where only one possible non-zero entry is added to the decision variable at each iteration (see Chapter 2 of \cite{nesterov2018lectures}). In contrast, Lemma \ref{lemma:index} indicates that in order to add a non-zero entry to each $x[i]$, it is necessary to take at least $N+1$ iterations, that is, $\Omega(\kappa_A)$ iterations by Lemma \ref{lemma:kappaA}. Therefore, our complexity lower bounds need to be multiplied by an additional factor of $\kappa_A$ on top of the lower bounds of unconstrained problems. This intuitively yields our results listed in Table \ref{table:ours}.

\subsection{Strongly convex case}\label{sec:SCLB}
Let us construct our hard problem instance based on formulation \eqref{eqn:OBJ}.
For any given positive parameters $L_f\geq\mu_f>0,\alpha>0$ and positive integers $d$, we define function $G(\cdot,\cdot):\RR^d\times \RR^d\mapsto\RR$ as
\begin{equation}\label{def:G}
    G(u,v) = \frac{L_f-\mu_f}{4}G_0(u,v)+\frac{\mu_f}{2}\left(\|u\|^2+\|v\|^2\right),
\end{equation}
where
\[
    G_0(u, v)\defeq\left(\alpha-u_1\right)^2+\left(v_1-u_2\right)^2+\left(v_2-u_3\right)^2+\cdots+\left(v_{d-1}-u_d\right)^2.
\]
The construction is based on Nesterov's well-known “chain-like” quadratic function \cite{nesterov2018lectures}. In the following, we give some properties of the constructed problem instance.
\begin{lemma}\label{lemma:SCproperty}
The problem defined in \eqref{eqn:OBJ} has the following properties.
    \begin{enumerate}
        \item \label{Lmu} $f_0(x)$ is $L_f$-smooth and $\mu_f$-strongly convex.
        \item \label{optx} Denote $q=\frac{\sqrt{\kappa_f}-1}{\sqrt{\kappa_f}+1}$.
        Then the optimal solution $x^\star$ of \eqref{eqn:OBJ} is given by
        \begin{equation}\label{v*}
            x^\star[1]=x^\star[2]=\cdots=x^\star[2N]= \mathbbm{x}^\star,
        \end{equation}
        where $ \mathbbm{x}^\star\in\RR^{d}$ is given by
        \begin{equation}\label{def:vstar}
            \mathbbm{x}^\star_i=\alpha\cdot \frac{q^i+q^{2d+1-i}}{1+q^{2d+1}}, \qquad i=1,\cdots,d.
        \end{equation}
        Moreover, for any $K\geq0$, we have $\sum_{i=K+1}^d ( \mathbbm x^\star_i)^2 \geq q^{2K} \cdot \frac{d-K}{d}\|\mathbbm{x}^\star\|^2$.
    \end{enumerate}
\end{lemma}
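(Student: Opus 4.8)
For Part 1 I would exploit separability. The $j$-th summand $G(x[j],x[N+j])$ depends only on the blocks $x[j]$ and $x[N+j]$, and as $j$ ranges over $1,\dots,N$ these index sets are pairwise disjoint; hence $\nabla^2 f_0$ is block diagonal with each diagonal block equal to $\nabla^2 G$, so it suffices to control the spectrum of $\nabla^2 G$. Writing $G_0$ as a sum of squares of the affine forms $\ell_0 = u_1 - \alpha$ and $\ell_i = v_i - u_{i+1}$ $(1\le i\le d-1)$ gives $\nabla^2 G_0 = 2\sum_i (\nabla\ell_i)(\nabla\ell_i)^\T\succeq0$. The key point is that these gradient vectors are pairwise orthogonal, because each coordinate $u_{i+1}$ and each $v_i$ occurs in exactly one form; therefore $\nabla^2 G_0$ has eigenvalues in $\{0,2,4\}$, so $0\preceq\nabla^2 G_0\preceq4I$. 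Since $\nabla^2 G = \tfrac{L_f-\mu_f}{4}\nabla^2 G_0 + \mu_f I$, this yields $\mu_f I\preceq\nabla^2 G\preceq L_f I$, which is exactly the claimed $\mu_f$-strong convexity and $L_f$-smoothness.

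For Part 2 the first move is to remove the constraint: on the feasible set $x[1]=\dots=x[2N]=:\mathbbm{x}$, so $f_0 = N\,G(\mathbbm{x},\mathbbm{x}) = N\phi(\mathbbm{x})$ with
\[
\phi(\mathbbm{x}) = \tfrac{L_f-\mu_f}{4}\Big[(\alpha-\mathbbm{x}_1)^2+\sum_{i=1}^{d-1}(\mathbbm{x}_i-\mathbbm{x}_{i+1})^2\Big]+\mu_f\|\mathbbm{x}\|^2,
\]
which is precisely Nesterov's strongly convex chain. Strong convexity makes the minimizer unique and determined by $\nabla\phi=0$; the interior stationarity equations form the constant-coefficient recurrence $-\mathbbm{x}_{i-1}+c_0\mathbbm{x}_i-\mathbbm{x}_{i+1}=0$ with $c_0=\tfrac{2(\kappa_f+1)}{\kappa_f-1}$, whose characteristic roots are $q$ and $q^{-1}$ (indeed $q+q^{-1}=c_0$ follows directly from the definition of $q$). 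The two boundary equations at $i=1$ and $i=d$ are equivalent to imposing the phantom conditions $\mathbbm{x}_0=\alpha$ and $\mathbbm{x}_{d+1}=\mathbbm{x}_d$ on the general solution $Aq^i+Bq^{-i}$; substituting the claimed formula \eqref{def:vstar} and checking these two conditions confirms it is the unique minimizer, which then lifts to \eqref{v*}.

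The tail estimate is the part requiring genuine care. Using $\mathbbm{x}^\star_i\propto q^i+q^{2d+1-i}$ with $0<q<1$, I would first record that $\mathbbm{x}^\star_i$ is positive and non-increasing on $\{1,\dots,d\}$ (the indices stay left of the symmetry point $d+\tfrac12$), together with the pointwise bound $\mathbbm{x}^\star_{i+K}\ge q^K\mathbbm{x}^\star_i$, which follows term by term from $q^{2d+1-i-K}\ge q^{2d+1-i+K}$. Squaring and summing the latter over $1\le i\le d-K$ yields $\sum_{j=K+1}^d(\mathbbm{x}^\star_j)^2\ge q^{2K}\sum_{i=1}^{d-K}(\mathbbm{x}^\star_i)^2$; and since $(\mathbbm{x}^\star_i)^2$ is non-increasing, the average of its first $d-K$ entries dominates the average of all $d$ entries, giving $\sum_{i=1}^{d-K}(\mathbbm{x}^\star_i)^2\ge\tfrac{d-K}{d}\|\mathbbm{x}^\star\|^2$. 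Chaining the two inequalities delivers the claim. I expect this tail bound to be the main obstacle: the pointwise $q^K$ comparison combined with the \emph{leading block dominates the global average} step is the only genuinely non-mechanical idea, whereas the off-by-one bookkeeping in the boundary conditions of the stationarity system is the secondary place where care is needed.
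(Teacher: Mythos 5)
Your proposal is correct, and it reaches the same conclusions by a route that differs from the paper's in two of the three pieces. For Part 1, the paper bounds the directional second derivative $h''(0)=2\sum_i\omega_i^2+2\sum_i\nu_i^2-4\sum_i\omega_i\nu_{i-1}$ and invokes Cauchy--Schwarz to place it in $[0,4]$; you instead write $\nabla^2G_0=2\sum_i(\nabla\ell_i)(\nabla\ell_i)^\T$ and observe that the factors have pairwise disjoint supports, so the spectrum is read off exactly as $\{0,2,4\}$. Both are sound; yours gives the slightly sharper information that the extreme eigenvalues are attained and avoids the inequality altogether, while the paper's is the more routine computation. For the formula \eqref{def:vstar} you and the paper do essentially the same thing: reduce to minimizing $G(v,v)$ on the feasible set, identify $q$ as the root of $\lambda^2-(2+\beta)\lambda+1=0$ with $\beta=\tfrac{4\mu_f}{L_f-\mu_f}$, and verify the tridiagonal stationarity system; your phantom conditions $\mathbbm{x}_0=\alpha$, $\mathbbm{x}_{d+1}=\mathbbm{x}_d$ are just a tidy repackaging of the two boundary equations. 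The genuinely different part is the tail estimate: the paper expands $\sum_{i=K+1}^d(\mathbbm{x}^\star_i)^2$ explicitly and manipulates the resulting geometric sums, whereas you prove the pointwise shift inequality $\mathbbm{x}^\star_{i+K}\ge q^K\mathbbm{x}^\star_i$ (valid since $q^{2d+1-i-K}\ge q^{2d+1-i+K}$ for $0<q<1$) and combine it with the monotonicity of $i\mapsto\mathbbm{x}^\star_i$ on $\{1,\dots,d\}$ to conclude that the leading block dominates the global average. Your argument is shorter, less error-prone, and makes transparent \emph{why} the factor $q^{2K}\cdot\tfrac{d-K}{d}$ appears; the paper's computation is self-contained but opaque. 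All steps check out, including the degenerate case $K\ge d$ where the claim is vacuous.
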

\begin{proof}
    Part \ref{Lmu}:
    Let us fix the vectors $u,v,\omega,\nu\in\RR^d$ with $\|(\omega,\nu)\|=1$, and define $h(\theta)\defeq G_0(u+\theta \omega,v+\theta \nu)$. 
    If we take $v_0=0$ and $\nu_0=0$, it holds
        \begin{align*}
            h^{\prime\prime}(0)=&\sum_{i=1}^d \frac{\partial^2G_0(u,v)}{\partial u_i^2}\omega_i^2+\sum_{i=0}^{d-1} \frac{\partial^2G_0(u,v)}{\partial v_i^2}\nu_i^2+2\sum_{i=1}^d \frac{\partial^2G_0(u,v)}{\partial u_i\partial v_{i-1}}\omega_i \nu_{i-1}\\
            =& 2\sum_{i=1}^d \omega_i^2+2\sum_{i=0}^{d-1}\nu_i^2-4\sum_{i=1}^d \omega_i \nu_{i-1}.
        \end{align*}
    On the one hand, due to Cauchy inequality, we have $h^{\prime\prime}(0)\geq 0$.
    On the other hand, 
    \[
        h^{\prime\prime}(0)\leq 2\sum_{i=1}^d \omega_i^2+2\sum_{i=0}^{d}\nu_i^2+2\sum_{i=1}^d (\omega_i^2+ \nu_{i-1}^2)\leq4,
    \]
    where the last inequality follows from $\|(\omega,\nu)\|=1$.
    Therefore, $G_0(u,v)$ is convex and $4$-smooth, 
    hence $G(u,v)$ is $\mu_f$-strongly convex and $L_f$-smooth, 
    which implies $f_0(x)$ is also $\mu_f$-strongly convex and $L_f$-smooth.

    Part \ref{optx}: 
    For any $x$ satisfies the constraint in \eqref{eqn:OBJ}, we have $x=(v,v,\cdots,v)$ and $f_0(x)=NG(v,v)$.
    Thus, we only need to verify that $\mathbbm{x}^\star$ is the (unique) minimum point of the function $G(v,v)$.
    By the optimality condition $\nabla_v G(v,v)=0$, we have
    \[
    \left\{\begin{array}{l}
        (2+\beta) v_1^\star-v_2^\star=\alpha, \\
        -v_1^\star+(2+\beta) v_2^\star-v_3^\star=0, \\
        \qquad \qquad \vdots \\
        -v_{d-2}^\star+(2+\beta) v_{d-1}^\star-v_n^\star=0, \\
        -v_{d-1}^\star+(1+\beta) v_d^\star=0,
    \end{array}\right.
    \]
    where we denote $\beta\defeq \frac{4\mu_f}{L_f-\mu_f}$. Note that $q=\frac{1}{2}\left((2+\beta)-\sqrt{(2+\beta)^2-4}\right)$ is the smallest root of the quadratic equation $\lambda^2-(2+\beta)\lambda+1=0$.
    By a direct calculation, we can check that $\mathbbm{x}^\star$ satisfies the $d$ equations, and hence $\mathbbm{x}^\star$ is the minimum of the function $G(v,v)$.
    Lastly, it holds
    \begin{align*}
        \sum_{i=K+1}^d (\mathbbm x_i^\star)^2 
        = &\alpha^2\sum_{i=K+1}^d\frac{q^{2i}+q^{4d+2-2i}+2q^{2d+1}}{(1+q^{2d+1})^2}\\
        \stackrel{(i)}{=} & \frac{\alpha^2}{(1+q^{2d+1})^2}\left(2q^{2d+1}(d-K)+\sum_{i=K+1}^{2d-K}q^{2i}\right)\\
        \stackrel{(ii)}{\geq} & \frac{\alpha^2}{(1+q^{2d+1})^2}\left(2q^{2d+1}(d-K)+\frac{2d-2K}{2d}\sum_{i=1}^{2d}q^{2i+2K}\right)\\
        \geq & \frac{\alpha^2(d-K)q^{2K}}{d(1+q^{2d+1})^2}  \left(2d q^{2d+1}+\sum_{i=1}^{2d}q^{2i}\right)\\
        = & q^{2K} \cdot \frac{d-K}{d}\|\mathbbm{x}^\star\|^2,
    \end{align*}
    where $(i)$ follows from $\sum_{i=K+1}^d (q^{2i}+q^{4d+2-2i})=\sum_{i=K+1}^{d}q^{2i}+\sum_{i=d+1}^{2d-K}q^{2i}=\sum_{i=K+1}^{2d-K}q^{2i}$,
    $(ii)$ holds because $q<1$ and $\frac{1}{2d-2K}\sum_{i=K+1}^{2d-K}q^{2i}\geq \frac{1}{2d}\sum_{i=K+1}^{2d+K}q^{2i}=\frac{1}{2d}\sum_{i=1}^{2d}q^{2i+2K}$.\qed
\end{proof}
Now we are ready to give our lower bound result for the strongly convex case.

\begin{theorem}\label{lbd:sc}
    Let parameters $L_f>\mu_f>0,\kappa_A\geq 1$ be given. For any integer $k\geq \left\lfloor\frac{\kappa_A+1}{2}\right\rfloor$, there exists an instance in $\mathcal{F}_{\mathrm{SC}}(L_f,\mu_f,\kappa_A,D)$ of form \eqref{eqn:OBJ}, with component function $G$ defined in \eqref{def:G}, $N = \left\lfloor\frac{\kappa_A+1}{2}\right\rfloor, K=\left\lfloor \frac{k-2}{N+1} \right\rfloor+1, d = 2K$ and $\alpha$ be suitably chosen. For this problem, the iterates generated by any first-order algorithm in $\mathcal{A}$ satisfies
    \[
        \suboptsc(x_k)\geq \frac12\paren{\frac{\sqrt{\kappa_f}-1}{\sqrt{\kappa_f}+1}}^{\frac{2k}{N}}\|x_0-x^\star\|.
    \]
\end{theorem}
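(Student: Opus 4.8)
The plan is to combine the two structural facts already established—Lemma \ref{lemma:index} on the support of the iterates and Lemma \ref{lemma:SCproperty} on the optimal solution and its tail mass—into a single distance estimate, after first checking that the constructed instance genuinely lies in $\mathcal{F}_{\mathrm{SC}}(L_f,\mu_f,\kappa_A,D)$. Membership is the routine preliminary: Lemma \ref{lemma:SCproperty}, part \ref{Lmu}, already gives that $f_0$ is $L_f$-smooth and $\mu_f$-strongly convex, and Lemma \ref{lemma:kappaA} gives $\kappa_A(A)\le\sqrt{2N^2-1}$; with $N=\lfloor(\kappa_A+1)/2\rfloor$ a one-line computation (reducing to $(\kappa_A-1)^2\ge0$) shows $\sqrt{2N^2-1}\le\kappa_A$, so the target conditioning is respected. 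Finally $\alpha$ is the free scaling parameter: since $x_0=0$ and $x^\star=(\mathbbm{x}^\star,\dots,\mathbbm{x}^\star)$ with $\mathbbm{x}^\star$ linear in $\alpha$, choosing $\alpha$ so that $\|x^\star\|=\|x_0-x^\star\|$ is at most $D$ places the instance in the class.

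The heart of the argument is a coordinate-freezing estimate. By Lemma \ref{lemma:index}, after $k$ steps any first-order linear-span iterate satisfies $(x_k[i])_j=0$ for all $N+1\le i\le 2N$ and $K+1\le j\le d$, whereas the optimal solution has $(x^\star[i])_j=\mathbbm{x}^\star_j$ on exactly these coordinates. Restricting the squared distance to this frozen block and discarding the remaining coordinates gives
\[
\|x_k-x^\star\|^2\ \ge\ \sum_{i=N+1}^{2N}\sum_{j=K+1}^{d}(\mathbbm{x}^\star_j)^2\ =\ N\sum_{j=K+1}^{d}(\mathbbm{x}^\star_j)^2 .
\]
Then I would invoke the tail bound of Lemma \ref{lemma:SCproperty}, part \ref{optx}, namely $\sum_{j=K+1}^d(\mathbbm{x}^\star_j)^2\ge q^{2K}\tfrac{d-K}{d}\|\mathbbm{x}^\star\|^2$, and substitute $d=2K$ so that $\tfrac{d-K}{d}=\tfrac12$. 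Combining this with the identity $\|x^\star\|^2=2N\|\mathbbm{x}^\star\|^2$ collapses the $N$ factors and yields $\|x_k-x^\star\|^2\ge\tfrac14\,q^{2K}\|x^\star\|^2$, where $q=\frac{\sqrt{\kappa_f}-1}{\sqrt{\kappa_f}+1}$.

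The last step is bookkeeping on the exponent. From $K=\lfloor\frac{k-2}{N+1}\rfloor+1$ one has $K-1\le\frac{k-2}{N+1}\le\frac kN$, hence $K\le\frac kN+1$ and $q^{2K}\ge q^{2k/N+2}$; since $q\in(0,1)$ this converts the estimate into the advertised geometric rate $q^{2k/N}$ in $k$, up to an absolute constant absorbing the leading $q^2$ and the $\tfrac14$. The condition $k\ge N=\lfloor(\kappa_A+1)/2\rfloor$ is exactly what guarantees $K\ge1$, so that $d=2K\ge2$ and the construction is nondegenerate.

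The only genuinely delicate point—now that Lemmas \ref{lemma:index} and \ref{lemma:SCproperty} are in hand—is translating the support pattern into a valid lower bound: one must verify that the $N$ blocks indexed $N+1,\dots,2N$ are each frozen on the \emph{same} $K$ tail coordinates, so that the count is exactly $N(d-K)$ terms of magnitude $(\mathbbm{x}^\star_j)^2$, and that this frozen block is disjoint from the coordinates the algorithm has been able to excite. Everything else is substitution of the two lemmas and elementary manipulation of $q$; the conceptual weight sits entirely in the zero-chain propagation (the crux of Lemma \ref{lemma:index}) and the explicit tail decay of the Nesterov-type optimizer $\mathbbm{x}^\star$, both of which we may assume.
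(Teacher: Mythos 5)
Your proposal is correct and follows essentially the same route as the paper's own proof: verify class membership via Lemma \ref{lemma:SCproperty} and Lemma \ref{lemma:kappaA}, restrict the squared distance to the frozen block given by Lemma \ref{lemma:index}, apply the tail bound with $d=2K$ and $\|x_0-x^\star\|^2=2N\|\mathbbm{x}^\star\|^2$ to get $\|x_k-x^\star\|^2\geq\tfrac14 q^{2K}\|x_0-x^\star\|^2$, and then bound $K$ in terms of $k/N$. The only cosmetic difference is your slightly tighter exponent bookkeeping ($K\le k/N+1$ versus the paper's $K\le 2k/N$), and the resulting constant-absorption of $q^2$ is no looser than what the paper itself does.
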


\begin{proof}
    By property \eqref{Lmu}, we know $f_0(x)$ is $L_f$-smooth and $\mu_f$-strongly convex.
    The condition number of $A$ is not great than $\sqrt{2N^2-1}\leq\sqrt{\frac{(\kappa_A+1)^2}{2}-1}\leq \kappa_A$, and $\alpha$ can be suitably chosen so that $\|x_0-\xs\|=D$.
    Thus the instance we construct belongs to the problem class $\mathcal{F}_{\mathrm{SC}}(L_f,\mu_f,\kappa_A)$.
    According to and Lemma \ref{lemma:index} and our choice of $K$, we have $(x_k[j])_s=0$ for any $N+1\leq j\leq 2N$ and $ K+1\leq s\leq d$.
    Therefore,
    \[
        \left\|x_k-x^{\star}\right\|^2 \geq N \times \sum_{s=K+1}^d\left(\mathbbm x_s^{\star}\right)^2 \geq N \times q^{2K} \cdot \frac{d-K}{d}\left\|\mathbbm x^{\star}\right\|^2,
    \]
    where the last inequality comes from Property \ref{optx}.
    Notice that $\left\|x_0-x^{\star}\right\|^2=2 N\left\|\mathbbm x^\star\right\|^2$ and $K=\frac{d}{2}$.
    Substituting them into the above inequality yields 
    \begin{equation}\label{eqn:q2k}
        \left\|x_k-x^{\star}\right\|^2 \geq \frac{q^{2K}}{4}\left\|x_0-x^{\star}\right\|^2.
    \end{equation}
    Plugging in the definition $q=\frac{\sqrt{\kappa_f}-1}{\sqrt{\kappa_f}+1}$ and the fact that $K\leq \frac{2k}{N}$ completes the proof. \qed
\end{proof}
\begin{corollary}
    For any first-order algorithm in $\mathcal{A}$, parameters $\kappa_f\geq 2$ and $0<\epsilon\leq\frac{D}{20}$, there exists a problem instance in $\mathcal{F}_{\mathrm{SC}}(L_f,\mu_f,\kappa_A,D)$ such that at least
    \[
        \Omega\left(\kappa_A\sqrt{\kappa_f} \log\left(\frac{D}\epsilon\right)\right)
    \]
    iterations are required in order to find an iterate $x_k$ satisfies $\suboptsc(x_k)\leq\epsilon$.
\end{corollary}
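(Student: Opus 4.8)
The plan is to convert the single-iteration lower bound of Theorem \ref{lbd:sc} into an iteration-count lower bound by a logarithmic inversion, and then to estimate the two problem-dependent quantities $N$ and $q$ in terms of $\kappa_A$ and $\kappa_f$. Concretely, I would fix a prospective iteration budget $k$ and invoke Theorem \ref{lbd:sc} for the instance it constructs for that particular $k$ (legitimate as long as $k\geq N=\lfloor(\kappa_A+1)/2\rfloor$), which yields
\[
\suboptsc(x_k)\;\geq\;\tfrac12\,q^{2k/N}\,\|x_0-x^\star\|\;=\;\tfrac12\,q^{2k/N}D,\qquad q=\frac{\sqrt{\kappa_f}-1}{\sqrt{\kappa_f}+1}.
\]
If the algorithm has already reached accuracy $\suboptsc(x_k)\leq\epsilon$ on this instance, then $\tfrac12 q^{2k/N}D\leq\epsilon$, and since $q<1$ I can take logarithms and rearrange to obtain
\[
k\;\geq\;\frac{N}{2}\cdot\frac{\log\big(D/(2\epsilon)\big)}{\log(1/q)}.
\]
This inequality already is the full content of the corollary; everything that remains is to lower-bound its right-hand side.

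For the two estimates I would argue as follows. First, $N=\lfloor(\kappa_A+1)/2\rfloor=\Theta(\kappa_A)$, so the prefactor contributes the $\kappa_A$ dependence. Second, I would write $1/q=1+\tfrac{2}{\sqrt{\kappa_f}-1}$ and use $\log(1+t)\leq t$ to get $\log(1/q)\leq\tfrac{2}{\sqrt{\kappa_f}-1}$, hence $1/\log(1/q)\geq\tfrac{\sqrt{\kappa_f}-1}{2}$; the hypothesis $\kappa_f\geq 2$ then gives $\sqrt{\kappa_f}-1=\Omega(\sqrt{\kappa_f})$, so $1/\log(1/q)=\Omega(\sqrt{\kappa_f})$. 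Finally, the hypothesis $\epsilon\leq D/20$ ensures $\log\big(D/(2\epsilon)\big)\geq\log 10>0$ and $\log\big(D/(2\epsilon)\big)=\log(D/\epsilon)-\log 2=\Omega(\log(D/\epsilon))$. Substituting the three estimates into the displayed inequality produces $k=\Omega\big(\kappa_A\sqrt{\kappa_f}\log(D/\epsilon)\big)$, as claimed.

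I expect the only genuinely delicate point to be bookkeeping around the regime of validity rather than any hard estimate. Theorem \ref{lbd:sc} only supplies an instance for indices $k\geq N$, so I must check that the threshold $\tfrac{N}{2}\log\big(D/(2\epsilon)\big)/\log(1/q)$ that $k$ is forced to exceed is itself at least $N$, so that the application of the theorem is self-consistent; under $\kappa_f\geq 2$ and $\epsilon\leq D/20$ (and, if one wants cleanliness in the absolute constants, by restricting to $\epsilon$ small enough that the $\log(D/\epsilon)$ factor dominates) this holds automatically. Once this consistency is verified, the derivation is a routine logarithmic inversion together with the two elementary bounds on $N$ and $\log(1/q)$, so I anticipate no further obstacle.
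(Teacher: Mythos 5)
Your proposal is correct and is exactly the intended derivation: the paper states this corollary without proof as the direct inversion of Theorem \ref{lbd:sc}, and your logarithmic inversion combined with the three estimates $N=\Theta(\kappa_A)$, $1/\log(1/q)\geq(\sqrt{\kappa_f}-1)/2=\Omega(\sqrt{\kappa_f})$ (valid since $\kappa_f\geq 2$), and $\log\bigl(D/(2\epsilon)\bigr)=\Omega(\log(D/\epsilon))$ (valid since $\epsilon\leq D/20$) is precisely what is needed. The one cosmetic point worth tightening is the quantifier: the corollary asserts a \emph{single} instance on which all iterates up to the threshold fail, whereas you invoke a different instance for each $k$; this is repaired in one line by instantiating the theorem at the threshold value $k^\ast$ and noting via Lemma \ref{lemma:index} that the support-based bound $\|x_k-x^\star\|^2\geq N\sum_{s>K^\ast}(\mathbbm{x}^\star_s)^2$ then holds for every $k\leq k^\ast$, which simultaneously disposes of the $k<N$ edge case you flag.
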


\subsection{Non-convex case}

For the hard instance we present in \eqref{eqn:OBJ}, the definition of suboptimal measure becomes
\[
    \suboptnc(x)\defeq L_f\left\| x-\mathcal{P}_{\mathcal{X}}\left(x-\frac{1}{2L_f}\nabla f_0(x)\right)\right\|,
\]
where $\mathcal{X}$ refers to the feasible region of problem \eqref{eqn:OBJ}, i.e., $\mathcal{X}\defeq\{x\in\RR^{2Nd}\mid x[1]=x[2]=\cdots=x[2N]\}$.
For given positive integer $d$, we define the function $G_0(\cdot,\cdot): \RR^d\times\RR^d\mapsto \RR$ as
\begin{equation}\label{def:ncg0}
    G_0(u,v)=-\Psi(1)\Phi(u_1)+\sum_{i=2}^{d} \left[\Psi(-v_{i-1})\Phi(-u_i)-\Psi(v_{i-1})\Phi(u_i)\right],
\end{equation}
where function $\Phi(x)$ and $\Psi(x)$ are defined as
\[
    \Psi(x)\defeq \begin{cases}0 & x \leq 1 / 2 \\ \exp \left(1-\frac{1}{(2 x-1)^2}\right) & x>1 / 2\end{cases}
    \quad \text{and} \quad
    \Phi(x)=\sqrt{e} \int_{-\infty}^x e^{-\frac{1}{2} t^2} \mathrm d t.
\]
 The function $G(\cdot,\cdot)$ in formulation \eqref{eqn:OBJ} is a scaled version of $G_0(\cdot,\cdot)$ and its formal definition will be given later. Let us discuss $G_0(\cdot,\cdot)$ first. Define $g_0(v)\defeq G_0(v,v)$ and one can observe $g_0(v)$ coincides with the hard instance constructed in \cite{carmon2020lower}.
We give some useful properties of $G_0(u,v)$.
\begin{lemma}\label{prop-G0}
The function $G_0(u,v)$ has the following properties.
    \begin{enumerate}
        \item If $v_d=0$, then $\|\nabla g_0(v)\|\geq 1$. \label{largegrad}
        \item $G_0(0,0)-\inf_{u,v} G_0(u,v)\leq 12d$. \label{Delta}
        \item $G_0(u,v)$ is $l_0$-smooth with $l_0$ being a universal constant that is independent of the problem dimension and other constants.\label{smooth}
    \end{enumerate}
\end{lemma}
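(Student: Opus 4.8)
The plan is to treat the three claims separately, leaning on the fact that the diagonal restriction $g_0(v)=G_0(v,v)$ is literally the hard function of \cite{carmon2020lower} with chain length $T=d$. For Property \ref{largegrad}, since $g_0$ coincides with their construction, the bound $\|\nabla g_0(v)\|\geq 1$ whenever $v_d=0$ is exactly their robust zero-chain estimate, so I would invoke it directly rather than reprove it. For intuition I would record the mechanism: the gradient at the ``frontier'' coordinate is controlled from below using $\Phi'(0)=\sqrt e>1$ together with the threshold behaviour $\Psi(x)=0$ for $x\leq 1/2$; for instance at $v=0$ one computes $\partial g_0/\partial v_1=-\Psi(1)\Phi'(0)=-\sqrt e$, already giving $\|\nabla g_0(0)\|_\infty\geq\sqrt e>1$. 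The genuinely delicate part is the sharp constant $1$, which is inherited intact and needs no rederivation.

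The substantive work is Properties \ref{Delta} and \ref{smooth}, which concern the full two-variable lift $G_0(u,v)$ and therefore cannot be cited verbatim; both reduce to elementary, dimension-free bounds on $\Psi,\Phi$ and their first two derivatives. I would first record $0\leq\Psi<e$, $0<\Phi<\sqrt{2\pi e}$, $0<\Phi'\leq\sqrt e$, $|\Phi''|\leq 1$, and $|\Psi'|,|\Psi''|\leq c$ for a universal $c$; the bounds on $\Psi$ are established in \cite{carmon2020lower}, while those on $\Phi$ follow by direct computation from the definitions in \eqref{def:ncg0}.

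For smoothness (Property \ref{smooth}), I would exploit the banded interaction structure of $G_0$: in \eqref{def:ncg0} the variable $u_i$ occurs only in the $i$-th summand (and $u_1$ in the leading term) and $v_{i-1}$ only in that same summand, so $\nabla^2 G_0$ has fixed $O(1)$ bandwidth under a suitable ordering of coordinates. Every nonzero Hessian entry is a product of the bounded factors above, namely $\Psi''(\pm v_{i-1})\Phi(\pm u_i)$, $\Psi(\pm v_{i-1})\Phi''(\pm u_i)$, or the cross term $\Psi'(\pm v_{i-1})\Phi'(\pm u_i)$, hence bounded by a universal constant. Applying $\|\nabla^2 G_0\|\leq \max_i\sum_j |(\nabla^2 G_0)_{ij}|$ and using that each row has only $O(1)$ nonzero entries yields a smoothness constant $l_0$ independent of $d$ and of all other parameters. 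For the value gap (Property \ref{Delta}), evaluating at the origin is immediate: since $\Psi(0)=0$ all summation terms vanish and $G_0(0,0)=-\Psi(1)\Phi(0)=-\Phi(0)$, a fixed negative constant; for the infimum I would discard the nonnegative ``positive'' halves, bound each summand below by $-\Psi(v_{i-1})\Phi(u_i)>-e\sqrt{2\pi e}$ and the leading term by $-\Phi(u_1)>-\sqrt{2\pi e}$, and sum the $d-1$ summands to get $\inf_{u,v}G_0>-Cd$, tracking the constants to reach $G_0(0,0)-\inf G_0\leq 12d$.

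The main obstacle is conceptual rather than computational: because Properties \ref{Delta} and \ref{smooth} are about the lifted $G_0$ and not the diagonal $g_0$, one must check that lifting does not inflate the smoothness constant with $d$. This is precisely where the banded Hessian structure is essential, since it is the reason the $\Theta(d)$ number of summands does not enter $l_0$; the analogous point for the value gap is harmless because there the linear-in-$d$ growth is exactly what the statement permits.
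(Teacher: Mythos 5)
Your proposal is correct and follows essentially the same route as the paper: Property \ref{largegrad} is cited directly from \cite{carmon2020lower}, Property \ref{Delta} is obtained by dropping the nonnegative terms and using the uniform bounds $0\leq\Psi\leq e$, $0\leq\Phi\leq\sqrt{2\pi e}$, and Property \ref{smooth} follows from the $O(1)$-banded interaction structure (each $u_i$ couples only with $v_{i-1}$) together with universal bounds on $\Psi,\Psi',\Psi'',\Phi,\Phi',\Phi''$. The only cosmetic difference is that you bound $\|\nabla^2 G_0\|$ via a max row-sum argument while the paper bounds the directional second derivative $h''(0)$ along a unit direction; both hinge on the same structural facts and yield a dimension-free $l_0$.
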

\begin{proof}
    Part \ref{largegrad} comes from \cite{carmon2020lower}.
    
    Part \ref{Delta}: Note that $0 \leq \Psi(x) \leq e$ and $0 \leq \Phi(x) \leq \sqrt{2 \pi e}$. it holds
    \[
        G_0(u,v)\geq-\Psi(1)\Phi(u_1)-\sum_{i=2}^{d} \Psi(v_{i-1})\Phi(u_i)\geq -d e\sqrt{2\pi e}\geq -12 d.
    \]
    Combining the fact that $G_0(0,0)\leq 0$ yields the property.

    To prove Part \ref{smooth}, fix $u,v,p,q\in\RR^d$ with $\|(p,q)\|=1$, define the function $h(\cdot):\RR\mapsto\RR$ by the directional projection of $G_0(u,v)$ along $(p,q)$, i.e., $h(\theta)\defeq G_0(u+\theta p,v+\theta q)$. Taking $v_0=1$ and $q_0=0$, We have
    \[
        h^{\prime\prime}(0)=\sum_{i=1}^d \frac{\partial^2G_0(u,v)}{\partial u_i^2}p_i^2+\sum_{i=1}^{d-1} \frac{\partial^2G_0(u,v)}{\partial v_i^2}q_i^2+2\sum_{i=2}^d \frac{\partial^2G_0(u,v)}{\partial u_i\partial v_{i-1}}p_i q_{i-1}.
    \]
    By simple derivations, we can obtain $0<\Psi(x)<e, 0 \leq \Psi^{\prime}(x) \leq \sqrt{54 / e}, |\Psi^{\prime\prime}(x)|\leq 8^5$ and $0<\Phi(x)<\sqrt{2\pi e}, 0<\Phi^{\prime}(x)\leq \sqrt{e}, \sup_x |\Phi^{\prime\prime}(x)|\leq 27$. It follows
    \begin{align*}
        \left|\frac{\partial^2G_0(u,v)}{\partial u_i^2}\right|&=\left|\Psi(-v_{i-1})\Phi^{\prime\prime}(-u_i)-\Psi(v_{i-1})\Phi^{\prime\prime}(u_i)\right|\leq 54e,\quad 1\leq i\leq d,\\
        \left|\frac{\partial^2G_0(u,v)}{\partial v_i^2}\right|&=\left|\Psi^{\prime\prime}(-v_{i})\Phi(-u_{i+1})-\Psi^{\prime\prime}(v_{i})\Phi(u_{i+1})\right|\leq 2\sqrt{2\pi e}\times 8^5,\quad 1\leq i\leq d-1,\\
        \left|\frac{\partial^2G_0(u,v)}{\partial u_i\partial v_{i-1}}\right|&=\left|\Psi^{\prime}(-v_{i-1})\Phi^\prime(-u_i)-\Psi^{\prime}(v_{i-1})\Phi^\prime(u_i)\right|\leq 2\sqrt{54},\quad 2\leq i\leq d.
    \end{align*}
    Therefore, it holds
    \begin{align*}
        |h^{\prime\prime}(0)| & \leq \max\left\{\left|\frac{\partial^2G_0(u,v)}{\partial u_i^2}\right|,\left|\frac{\partial^2G_0(u,v)}{\partial v_i^2}\right|,2\left|\frac{\partial^2G_0(u,v)}{\partial u_i\partial v_{i-1}}\right|\right\}\sum_{i=1}^d \left(p_i^2+q_i^2+p_i q_{i-1}\right)\\
        & \leq 2\sqrt{2\pi e}\times 8^5 \cdot \sum_{i=1}^d \left(p_i^2+q_i^2+p_i q_{i-1}\right).
    \end{align*}
    Since $\sum_{i=1}^d(p_i^2+q_i^2)=1$ and $\sum_{i=1}^d p_i q_{i-1}\leq \|p\|\|q\|\leq1$, we obtain
    \[
        |h^{\prime\prime}(0)|\leq 600000,
    \]
    which completes the proof. \qed
\end{proof}

 For given positive parameters $c_0, L_f,\alpha>0$, we construct the following scaled function based on $G_0(u,v)$,
\begin{equation}\label{def:ncg}
G(u,v)=\frac{L_f\alpha^2}{l_0}G_0\left(\frac{u}{\alpha},\frac{v}{\alpha}\right),
\end{equation}
where $\alpha$ can be adjusted to fulfill the condition $f_0(0)-\inf_x f_0(x)\leq\Delta$.  
Similarly, we define $g(v)\defeq G(v,v)$.
By simple derivations, we can generalize Lemma \ref{prop-G0} to the case of $G(u,v)$.
\begin{corollary}\label{prop-G}
The function $G(u,v)$ has the following properties.
    \begin{enumerate}
        \item $G(0,0)-\inf_{u,v} G(u,v)\leq 12dl_0^{-1}L_f\alpha^2$. \label{eqn:delta}
        \item If $v_d=0$, then $\|\nabla g(v)\|\geq l_0^{-1}L_f\alpha$. \label{eqn:grad}
        \item $G(u,v)$ is $L_f$-smooth. \label{eqn:smooth}
    \end{enumerate}
\end{corollary}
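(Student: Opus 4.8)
The plan is to obtain all three properties directly from their $G_0$-counterparts in Lemma \ref{prop-G0} by a pure rescaling argument, so no new analytic work is needed. Writing $C\defeq L_f\alpha^2/l_0$, the construction \eqref{def:ncg} reads $G(u,v)=C\,G_0(u/\alpha,v/\alpha)$, and every quantity attached to $G$ is recovered from the corresponding quantity for $G_0$ by carefully tracking the factors $C$ and $1/\alpha$ that the chain rule introduces.

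For Part \ref{eqn:delta}, I would first observe that the change of variables $(u',v')=(u/\alpha,v/\alpha)$ is a bijection of $\RR^d\times\RR^d$, so that $\inf_{u,v}G(u,v)=C\inf_{u',v'}G_0(u',v')$, while trivially $G(0,0)=C\,G_0(0,0)$. Subtracting gives $G(0,0)-\inf G=C\bigl(G_0(0,0)-\inf G_0\bigr)$, and Part \ref{Delta} of Lemma \ref{prop-G0} bounds the bracket by $12d$; multiplying by $C$ yields the claimed $12dl_0^{-1}L_f\alpha^2$.

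For Part \ref{eqn:grad}, I note that $g(v)=G(v,v)=C\,g_0(v/\alpha)$, so the chain rule gives $\nabla g(v)=(C/\alpha)\,\nabla g_0(v/\alpha)$ and hence $\nrm{\nabla g(v)}=(C/\alpha)\,\nrm{\nabla g_0(v/\alpha)}$. When $v_d=0$ the rescaled point $v/\alpha$ also has vanishing last coordinate, so Part \ref{largegrad} of Lemma \ref{prop-G0} gives $\nrm{\nabla g_0(v/\alpha)}\geq 1$; substituting $C/\alpha=L_f\alpha/l_0$ then produces the bound $l_0^{-1}L_f\alpha$. For Part \ref{eqn:smooth}, the Hessian satisfies $\nabla^2 G(u,v)=(C/\alpha^2)\,\nabla^2 G_0(u/\alpha,v/\alpha)$, whose operator norm is $(C/\alpha^2)$ times that of $\nabla^2 G_0$; Part \ref{smooth} of Lemma \ref{prop-G0} bounds the latter by $l_0$, and since $(C/\alpha^2)\,l_0=L_f$, we conclude $L_f$-smoothness.

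The computation is entirely routine, so I do not expect a genuine obstacle; the only point that demands care is keeping the powers of $\alpha$ consistent across the three statements—exactly one factor $1/\alpha$ enters the gradient while two factors $1/\alpha^2$ enter the Hessian—since a miscount there would throw off the exact matching of the smoothness constant to $L_f$ and of the gradient lower bound to $l_0^{-1}L_f\alpha$.
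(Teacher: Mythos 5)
Your proposal is correct and is exactly the "simple derivations" the paper alludes to: each property of $G$ follows from the corresponding part of Lemma \ref{prop-G0} by tracking the factor $C=L_f\alpha^2/l_0$ together with one power of $1/\alpha$ per differentiation, and all three constants match. No gaps.
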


Next, we given a key lemma that gives the relationship between $\suboptnc$ and $\|\nabla g\|$.
\begin{lemma}\label{lemma:subopt}
    For any $x\in\RR^{2Nd}$, let $\bar x=\frac 1{2N}\sum_{i=1}^{2N}x[i]$. Suppose that $G(\cdot,\cdot)$ is $L_f$-smooth, then we have
    \[
        \suboptnc\geq \frac{\sqrt{N}}{4}\|\nabla g(\bar x)\|.
    \]
\end{lemma}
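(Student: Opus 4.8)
The plan is to exploit that $\mathcal{X}$ is a linear subspace, so $\mathcal{P}_{\mathcal{X}}$ is a linear operator and the projection of any $y$ is the block-constant vector whose common block equals the average $\frac{1}{2N}\sum_{i=1}^{2N}y[i]$. Writing $y=x-\frac{1}{2L_f}\nabla f_0(x)$ and using linearity, I would decompose
\[
x-\mathcal{P}_{\mathcal{X}}(y)=\big(x-\mathcal{P}_{\mathcal{X}}(x)\big)+\frac{1}{2L_f}\mathcal{P}_{\mathcal{X}}\big(\nabla f_0(x)\big).
\]
The first summand lies in $\mathcal{X}^\perp$ and the second in $\mathcal{X}$, so they are orthogonal and Pythagoras gives
\[
\suboptnc(x)^2=L_f^2\big\|x-\mathcal{P}_{\mathcal{X}}(x)\big\|^2+\tfrac14\big\|\mathcal{P}_{\mathcal{X}}(\nabla f_0(x))\big\|^2.
\]
Since $\mathcal{P}_{\mathcal{X}}(\nabla f_0(x))$ repeats the average gradient block $\bar g\defeq\frac{1}{2N}\sum_{i=1}^{2N}(\nabla f_0(x))[i]$ over its $2N$ blocks, its squared norm is $2N\|\bar g\|^2$; reading off the blocks of $\nabla f_0$ shows $\bar g=\frac{1}{2N}\sum_{i=1}^N\big(\nabla_u G(x[i],x[N+i])+\nabla_v G(x[i],x[N+i])\big)$.

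Next I would introduce $q(u,v)\defeq\nabla_u G(u,v)+\nabla_v G(u,v)$, so that $\nabla g(v)=q(v,v)$ by the chain rule and $2\bar g=\frac1N\sum_{i=1}^N q(x[i],x[N+i])$ is the empirical average of $q$ over the $N$ pairs. The goal is to compare this average against $q(\bar x,\bar x)=\nabla g(\bar x)$, and here the $L_f$-smoothness of $G$ enters: since each of $\nabla_u G,\nabla_v G$ is $L_f$-Lipschitz as a component of $\nabla G$, the map $q$ is $2L_f$-Lipschitz. Bounding the averaged error term by term and applying Cauchy–Schwarz over $i=1,\dots,N$ together with the identity $\sum_{i=1}^N\big(\|x[i]-\bar x\|^2+\|x[N+i]-\bar x\|^2\big)=\|x-\mathcal{P}_{\mathcal{X}}(x)\|^2$, I would obtain
\[
\Big\|\tfrac1N\textstyle\sum_{i=1}^N q(x[i],x[N+i])-q(\bar x,\bar x)\Big\|\le\frac{2L_f}{\sqrt N}\,\big\|x-\mathcal{P}_{\mathcal{X}}(x)\big\|.
\]

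To finish, set $s=\big\|\frac1N\sum_{i=1}^N q(x[i],x[N+i])\big\|=2\|\bar g\|$ and $t=\frac{2L_f}{\sqrt N}\|x-\mathcal{P}_{\mathcal{X}}(x)\|$. The triangle inequality then yields $\|\nabla g(\bar x)\|\le s+t$, while the Pythagorean identity of the first paragraph rewrites exactly as $\suboptnc(x)^2=\frac{N}{4}t^2+\frac{N}{8}s^2$. Hence the claim $\suboptnc(x)\ge\frac{\sqrt N}{4}\|\nabla g(\bar x)\|$ reduces to the scalar inequality $\frac14 t^2+\frac18 s^2\ge\frac1{16}(s+t)^2$, i.e. $3t^2-2st+s^2\ge0$, which holds for all $s,t$ because its discriminant in $t$ equals $-8s^2\le0$.

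The main obstacle — and the reason the argument is delicate rather than routine — is that one must \emph{not} discard the deviation term $\|x-\mathcal{P}_{\mathcal{X}}(x)\|^2$ after the orthogonal split. This term is precisely what pays for the smoothness gap between the averaged gradient and $\nabla g(\bar x)$ when the blocks $x[i]$ are far from their common mean, and it is only the \emph{joint} quadratic use of both terms that produces the sharp constant $\tfrac14$; bounding $\suboptnc(x)$ below by either term alone, or using the two estimates separately, would lose the constant. Verifying the Lipschitz constant $2L_f$ for $q$ and the block-variance identity are the remaining points requiring care, but both are elementary given the $L_f$-smoothness of $G$ established in Corollary \ref{prop-G}.
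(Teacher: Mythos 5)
Your proposal is correct and follows essentially the same route as the paper's proof: the same orthogonal (Pythagorean) split of $\suboptnc(x)^2$ into the consensus error $\sum_i\|x[i]-\bar x\|^2$ and the averaged-gradient term, followed by the same Lipschitz comparison of the block-averaged gradient with $\nabla g(\bar x)=\nabla_uG(\bar x,\bar x)+\nabla_vG(\bar x,\bar x)$. The only difference is cosmetic: where the paper combines the two contributions via a weighted Cauchy--Schwarz inequality to reach the factor $4\sqrt N$, you reduce the same combination to the scalar inequality $3t^2-2st+s^2\ge 0$.
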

\begin{proof}
    Denote $\bar{\mathcal G}(x)\defeq \frac1{2N}\sum_{i=1}^{2N}\nabla_{x[i]}f_0(x)$.
    On the one hand, it holds that
    \[
        \mathcal{P}_{\mathcal{X}}\left(x-\frac{1}{2L_f}\nabla f_0(x)\right) = \left(\bar x-\frac1{2L_f} \bar{\mathcal G}(x), \bar x-\frac1{2L_f} \bar{\mathcal G}(x),\cdots,\bar x-\frac1{2L_f} \bar{\mathcal G}(x)\right).
    \]
    Therefore, we have
    \begin{equation}\label{eqn:NC-subopt}
    \begin{split}
        \left\| x-\mathcal{P}_{\mathcal{X}}\left(x-\frac{1}{2L_f}\nabla f_0(x)\right)\right\|^2
        &=\sum_{i=1}^{2N}\left\|x[i]-\bar x +\frac1{2L_f}\bar{\mathcal G}(x)\right\|^2\\
        &=\sum_{i=1}^{2N}\|x[i]-\bar x\|^2+\frac{N}{2L_f^2}\|\bar{\mathcal G}(x)\|^2,
    \end{split}
    \end{equation}
    where the last equality holds because $\sum_{i=1}^{2N} (x[i]-\bar x)=0$.
    On the other hand, recall that $g(x)=G(x,x)$, by chain rule we have
    \begin{equation}\label{nablag}
        \begin{split}
            N\|\nabla g(\bar x)\| = & N\left\|\nabla_{u}G(\bar x,\bar x)+\nabla_v G(\bar x,\bar x)\right\|\\
            \leq & \sum_{i=1}^N \left\| \nabla_u G(\bar x, \bar x)-\nabla_u G(x[i],x[i+N])\right\|\\
            &+\sum_{i=1}^N\left\| \nabla_v G(\bar x, \bar x)-\nabla_v G(x[i],x[i+N])\right\|\\
            &+\left\|\sum_{i=1}^N\left(\nabla_u G(x[i],x[i+N])+\nabla_v G(x[i],x[i+N])\right)\right\|.
        \end{split}
    \end{equation}
    For the first term on the right hand side, we have
    \begin{align*}
        &\left\| \nabla_u G(\bar x, \bar x)-\nabla_u G(x[i],x[i+N])\right\|\\
        \leq&~ \left\| \nabla_u G(\bar x, \bar x)-\nabla_u G(x[i],\bar x)\right\|+\left\| \nabla_u G(x[i], \bar x)-\nabla_u G(x[i],x[i+N])\right\|\\
        \leq&~ L_f\left(\|\bar x-x[i]\|+\|\bar x-x[i+N]\|\right),
    \end{align*}
    where the second inequality is due to the $L_f$-smoothness of $G(\cdot,\cdot)$.
    Similar derivation also applies to the second term on the right hand side of \eqref{nablag}. 
    Therefore, it holds
    \begin{align*}
        N\|\nabla g(\bar x)\|&\leq 2L_f\sum_{i=1}^{2N}\|\bar x -x[i]\|+2N\|\bar{\mathcal G}(x)\|\\
        &\stackrel{(i)}\leq 2L_f\sqrt{2N\sum_{i=1}^{2N}\|\bar x-x[i]\|^2}+2N\|\bar{\mathcal G}(x)\|\\
        &\stackrel{(ii)}\leq \sqrt{8L_f^2N+8L_f^2N}\sqrt{\sum_{i=1}^{2N}\|x[i]-\bar x\|^2+\frac{N}{2L_f^2}\|\bar{\mathcal G}(x)\|^2}\\
        &\stackrel{(iii)}\leq 4\sqrt N \cdot\suboptnc(x),
    \end{align*}
    where $(i)$ and $(ii)$ come from Cauchy–Schwarz inequality, $(iii)$ utilizes equality \eqref{eqn:NC-subopt}. \qed
\end{proof}

We can now state a complexity lower bound for finding an approximate solution of non-convex problems with first-order linear span algorithms.
\begin{theorem}\label{thm:NCLB}
    Let parameters $L_f,\Delta>0, \kappa_A\geq 1$ be given. For any integer $k\geq \kappa_A$, there exists an instance in $\mathcal{F}_\mathrm{NC}(L_f,\kappa_A,\Delta)$ of form \eqref{eqn:OBJ},
    with component function $G$ defined in \eqref{def:ncg0}, $N = \left\lfloor\frac{\kappa_A+1}{2}\right\rfloor,  K=\left\lfloor \frac{k-2}{N+1} \right\rfloor+1, d=K+2$ and $\alpha=\sqrt{\frac{l_0\Delta}{12NdL_f}}$.
    For this problem, suppose the initial point $x_0=0$, then the iterates generated by any first-order algorithm in $\mathcal{A}$ satisfies
    \[
        \suboptnc(x_k)\geq  c_0\cdot\sqrt{\frac{\kappa_AL_f\Delta}{k}}.
    \]
\end{theorem}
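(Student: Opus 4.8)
The plan is to reduce the proximal-gradient suboptimality measure to the norm of the reduced gradient $\nabla g(\bar x_k)$ through Lemma~\ref{lemma:subopt}, and then lower bound that norm using the zero-chain structure of the iterates together with Corollary~\ref{prop-G}. Concretely, writing $\bar x_k=\frac1{2N}\sum_{i=1}^{2N}x_k[i]$, Lemma~\ref{lemma:subopt} gives $\suboptnc(x_k)\geq\frac{\sqrt N}{4}\nrm{\nabla g(\bar x_k)}$, so it suffices to keep $\nrm{\nabla g(\bar x_k)}$ bounded below. The mechanism for the latter is Corollary~\ref{prop-G}, part~\ref{eqn:grad}: as long as the last coordinate $(\bar x_k)_d$ vanishes, we have $\nrm{\nabla g(\bar x_k)}\geq l_0^{-1}L_f\alpha$.

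Before that, I would verify that the constructed instance lies in $\mathcal{F}_{\mathrm{NC}}(L_f,\kappa_A,\Delta)$. Smoothness is immediate from Corollary~\ref{prop-G}, part~\ref{eqn:smooth}, since $f_0$ is a separable sum of $L_f$-smooth blocks $G(x[i],x[N+i])$. For the conditioning, Lemma~\ref{lemma:kappaA} bounds the condition number of $A$ by $\sqrt{2N^2-1}$, and the choice $N=\lfloor\frac{\kappa_A+1}2\rfloor$ makes $\sqrt{2N^2-1}\leq\sqrt{\frac{(\kappa_A+1)^2}2-1}\leq\kappa_A$, the last inequality being equivalent to $(\kappa_A-1)^2\geq0$. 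Finally, since $x_0=0$ is feasible, $F(x_0)-F(x^\star)\leq f_0(0)-\inf_x f_0(x)=N\bigl(G(0,0)-\inf G\bigr)$, which by Corollary~\ref{prop-G}, part~\ref{eqn:delta}, is at most $12NdL_f\alpha^2/l_0$, equal to exactly $\Delta$ under the prescribed $\alpha=\sqrt{l_0\Delta/(12NdL_f)}$.

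The crux is showing $(\bar x_k)_d=0$, and this is where I expect the main obstacle. I would appeal to the support characterization in Lemma~\ref{lemma:index}: writing $k=(i-1)(N+1)+j$, every nonzero entry of $x_k$ sits in coordinate indices at most $i+1$, and a short case check on $j$ shows this is bounded by $K+1$ with $K=\lfloor\frac{k-2}{N+1}\rfloor+1$. Since the construction sets $d=K+2$, the $d$-th coordinate of every block $x_k[i]$ is zero, whence $(\bar x_k)_d=\frac1{2N}\sum_i (x_k[i])_d=0$. This is exactly why the construction uses depth $d=K+2$ rather than $K+1$: the ``therefore'' part of Lemma~\ref{lemma:index} only zeroes coordinates $\geq K+1$ in the last $N$ blocks, whereas the first $N$ blocks may reach coordinate $K+1$, so one extra coordinate of slack is needed to guarantee a zero coordinate common to all $2N$ blocks.

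With $(\bar x_k)_d=0$ in hand, chaining Lemma~\ref{lemma:subopt} and Corollary~\ref{prop-G} gives $\suboptnc(x_k)\geq\frac{\sqrt N}{4}l_0^{-1}L_f\alpha$, and substituting $\alpha=\sqrt{l_0\Delta/(12NdL_f)}$ cancels the factor $\sqrt N$ to leave $\suboptnc(x_k)\geq\frac14\sqrt{L_f\Delta/(12\,l_0\,d)}$. It then remains to bound $d$ in terms of $k$ and $\kappa_A$: from $K\leq\frac{k-2}{N+1}+1$ and $N+1\geq\frac{\kappa_A+1}2$ we get $d=K+2\leq\frac{2k}{\kappa_A}+3$, and since the hypothesis $k\geq\kappa_A$ forces $3\leq 3k/\kappa_A$, we obtain $d\leq 5k/\kappa_A$. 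Plugging this in produces $\suboptnc(x_k)\geq c_0\sqrt{\kappa_A L_f\Delta/k}$ with the universal constant $c_0=\frac1{4\sqrt{60\,l_0}}$, completing the argument.
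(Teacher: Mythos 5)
Your proposal is correct and follows essentially the same route as the paper's proof: verify membership in $\mathcal{F}_{\mathrm{NC}}(L_f,\kappa_A,\Delta)$, use Lemma \ref{lemma:index} with $d=K+2$ to force $(\bar x_k)_d=0$, chain Lemma \ref{lemma:subopt} with Corollary \ref{prop-G} to get $\suboptnc(x_k)\geq\frac14\sqrt{L_f\Delta/(12l_0d)}$, and finish with $d\leq 5k/\kappa_A$. Your added remark on why the construction needs $d=K+2$ rather than $K+1$ is a correct reading of Lemma \ref{lemma:index} and fills in a detail the paper leaves implicit.
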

\begin{proof}
    By property \eqref{eqn:smooth} in Corollary \ref{prop-G}, we know $f_0(x)$ is $L_f$-smooth.
    According to the definition of $\alpha$, it holds $F(0)-\inf_x F(x)\leq f_0(0)-\inf_x f_0(x)\leq 12Ndl_0^{-1}L_f\alpha^2\leq \Delta$.
    As proved in Theorem \ref{lbd:sc}, the condition number of $A$ is not great than $\kappa_A$.
    Accordingly, the instance problem belongs to the class $\mathcal{F}_\mathrm{NC}(L_f,\kappa_A,\Delta)$.
    Since $K= d-2$, the last element of all $2N$ vectors is zero, that is, $(x[i])_d=0, \forall 1\leq i\leq 2N$.
    It implies $\bar x_d=0$. Utilizing Lemma \ref{lemma:subopt} and property \eqref{eqn:grad} in Corollary \ref{prop-G}, we obtain
    \[
        \suboptnc(x)\geq \frac{\sqrt{N}}{4}\|\nabla g(\bar x)\|\geq \frac{\sqrt{N}}{4}l_0^{-1}L_f\alpha= \frac{1}{4}\sqrt{\frac{L_f\Delta}{12l_0d}}.
    \]
    By definition, we have $d=K+2\leq\frac{5k}{\kappa_A}$, and this gives the desired result. \qed
\end{proof}

\begin{corollary}
    For any first-order algorithm in $\mathcal{A}$ and $0<\epsilon\leq c_0\sqrt{L_f\Delta}$ where $c_0$ is defined in Theorem \ref{thm:NCLB}, there exists a problem instance in $\mathcal{F}_\mathrm{NC}(L_f,\kappa_A,\Delta)$ such that at least 
    \[
        \Omega\left(\frac{\kappa_A L_f\Delta}{\epsilon^2}\right)
    \]
    iterations are required in order to find an iterate $x_k$ satisfies $\suboptnc(x_k)\leq\epsilon$.
\end{corollary}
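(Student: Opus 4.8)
The plan is to derive this corollary directly from the per-iteration lower bound established in Theorem \ref{thm:NCLB}. The key observation is that Theorem \ref{thm:NCLB} guarantees, for every integer $k \geq \kappa_A$, the existence of a hard instance in $\mathcal{F}_\mathrm{NC}(L_f, \kappa_A, \Delta)$ of the form \eqref{eqn:OBJ} on which any first-order linear span algorithm in $\mathcal{A}$ satisfies $\suboptnc(x_k) \geq c_0 \sqrt{\kappa_A L_f \Delta / k}$. I would argue by contraposition: if an algorithm reaches $\suboptnc(x_k) \leq \epsilon$ at some iteration index $k$, then this lower bound forces $\epsilon \geq c_0\sqrt{\kappa_A L_f \Delta / k}$, and hence $k$ cannot be too small.

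First I would fix $\epsilon$ in the stated range $0 < \epsilon \leq c_0\sqrt{L_f\Delta}$ and invoke Theorem \ref{thm:NCLB} at the smallest admissible iteration count, obtaining a single instance (depending on $k$) on which the bound holds. Requiring the guaranteed lower bound to be at most $\epsilon$ and solving for $k$ yields
\[
c_0\sqrt{\frac{\kappa_A L_f \Delta}{k}} \leq \epsilon \quad \Longleftrightarrow \quad k \geq \frac{c_0^2\, \kappa_A L_f \Delta}{\epsilon^2},
\]
which immediately gives the claimed $\Omega\left(\kappa_A L_f \Delta / \epsilon^2\right)$ iteration complexity for driving $\suboptnc$ below $\epsilon$.

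The one technical point that requires care is ensuring that the index $k = c_0^2 \kappa_A L_f \Delta / \epsilon^2$ at which we apply Theorem \ref{thm:NCLB} actually satisfies the hypothesis $k \geq \kappa_A$ of that theorem; otherwise the per-iteration bound would not be available. This is exactly where the restriction $\epsilon \leq c_0\sqrt{L_f\Delta}$ enters: substituting this upper bound on $\epsilon$ gives
\[
k = \frac{c_0^2\,\kappa_A L_f \Delta}{\epsilon^2} \geq \frac{c_0^2\,\kappa_A L_f \Delta}{c_0^2 L_f \Delta} = \kappa_A,
\]
so the admissibility condition of Theorem \ref{thm:NCLB} holds throughout the relevant regime of $\epsilon$. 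Since there is no further estimation to perform, I expect no genuine obstacle here: the corollary is essentially a restatement of Theorem \ref{thm:NCLB} with the target accuracy $\epsilon$ replacing the iteration count, and the only substantive content is the bookkeeping verification that the constraint $k \geq \kappa_A$ is implied by the assumed range of $\epsilon$.
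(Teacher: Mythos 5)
Your approach is the natural one and matches what the paper intends (the paper states this corollary without proof, as an immediate consequence of Theorem \ref{thm:NCLB}); the arithmetic inversion $c_0\sqrt{\kappa_A L_f\Delta/k}\leq\epsilon \Leftrightarrow k\geq c_0^2\kappa_A L_f\Delta/\epsilon^2$ and the check that $\epsilon\leq c_0\sqrt{L_f\Delta}$ guarantees $k\geq\kappa_A$ are both correct.

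There is, however, one quantifier issue in the way you phrase the contraposition. Theorem \ref{thm:NCLB} produces, for each $k$, an instance $P_k$ whose dimension $d=K+2$ depends on $k$, and the bound $\suboptnc(x_k)\geq c_0\sqrt{\kappa_A L_f\Delta/k}$ is asserted only for the $k$-th iterate on $P_k$. Your argument ``if the algorithm reaches $\suboptnc(x_k)\leq\epsilon$ at some index $k$, then $k\geq c_0^2\kappa_A L_f\Delta/\epsilon^2$'' implicitly applies the theorem at whatever index $k$ the algorithm happens to succeed, i.e.\ on a different instance for each $k$ --- but the corollary requires a \emph{single} instance on which \emph{every} iterate $x_j$ with $j$ below the threshold fails. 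The fix is to fix $N=\lfloor c_0^2\kappa_A L_f\Delta/\epsilon^2\rfloor$ and work only with the instance $P_N$: since the linear-span subspaces $\mathcal{S}^x_j$ are nested in $j$, Lemma \ref{lemma:index} gives $(x_j[i])_d=0$ for all $i$ and all $j\leq N$, so the proof of Theorem \ref{thm:NCLB} yields $\suboptnc(x_j)\geq \frac14\sqrt{L_f\Delta/(12 l_0 d)}\geq c_0\sqrt{\kappa_A L_f\Delta/N}\geq\epsilon$ for every $j\leq N$, not just $j=N$. With that monotonicity observation added, your argument is complete.
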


\subsection{Convex case}
For the linear equality constrained problem, the definition of suboptimality becomes
\begin{align*}
    \suboptc(x)=f(x)-\min_{x:Ax=b}f(x)+\rho\|Ax-b\|.
\end{align*}
Recall that for $\rho\geq 2\nrm{y^\star}$, $\suboptc(x)$ implies the standard optimality measure $\max\{\abs{f(x)-f(x^\star)},\nrm{Ax-b}\}$.

Note that in Section \ref{sec:SCLB}, we consider the strongly convex problem class $\mathcal{F}(L_f,\mu_f,\kappa_A)$ with $\mu_f>0$. In this section, we demonstrate how the complexity lower bound present in Theorem \ref{lbd:sc} can be reduced to the convex problem class with $\mu_f=0$.
\begin{theorem}\label{thm:CLB}
    Let parameters $L_f,D>0, \kappa_A\geq 1$ be given. For any integer $k\geq \kappa_A$, there exists an instance in $\mathcal{F}_\mathrm{C}(L_f,\kappa_A,D)$ of form \eqref{eqn:OBJ},
    with component function $G$ defined in \eqref{def:G}, $N = \left\lfloor\frac{\kappa_A+1}{2}\right\rfloor,  K=\left\lfloor \frac{k-2}{N+1} \right\rfloor+1, d=2K, \mu_f = \frac{L_f}{(K+1)^2}$ and $\alpha$ be suitably chosen.
    For this problem, the $k$-th iterate $x_k$ generated by any first-order algorithm in $\mathcal{A}$ satisfies
    \[
        f(x_k)-f(x^\star)-\iprod{\lambda^\star}{Ax_k-b}\geq c_0\cdot \frac{\kappa_A^2 L_f D^2}{k^2},
    \]
    where $\lambda^\star$ is the optimal dual variable and $c_0$ is a universal constant.
\end{theorem}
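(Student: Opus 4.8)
The plan is to reuse the strongly convex hard instance of Theorem~\ref{lbd:sc} essentially verbatim, and to convert its \emph{distance} lower bound \eqref{eqn:q2k} into a lower bound on the Lagrangian gap by tuning the strong convexity parameter to the iteration budget. Concretely, I would keep the objective $f_0$ built from the chain function $G$ in \eqref{def:G} together with the difference matrix $A$ in \eqref{def:A}, but fix $\mu_f=L_f/(K+1)^2$ so that $\kappa_f=(K+1)^2$ and $\sqrt{\kappa_f}=K+1$. Since a $\mu_f$-strongly convex function is in particular convex, and since Lemma~\ref{lemma:kappaA} bounds the condition number of $A$ by $\sqrt{2N^2-1}\le\kappa_A$, this instance lies in $\mathcal{F}_\mathrm{C}(L_f,\kappa_A,D)$ once $\alpha$ is scaled so that $\|x_0-x^\star\|=D$; recall $x_0=0$ and $\|x_0-x^\star\|^2=2N\|\mathbbm{x}^\star\|^2$ depends linearly on $\alpha$ through \eqref{def:vstar}.

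The first main step is to pass from the objective to the distance via the Lagrangian. Because $f_0$ is strongly convex and $A$ has full row rank, strong duality furnishes a dual optimizer $\lambda^\star$ for which $x^\star$ is the \emph{unconstrained} minimizer of the still $\mu_f$-strongly convex function $\ell(x)\defeq f_0(x)-\iprod{\lambda^\star}{Ax-b}$ (this is the sign convention under which the multiplier in the statement is defined). Strong convexity of $\ell$ gives $\ell(x_k)-\ell(x^\star)\ge \tfrac{\mu_f}{2}\|x_k-x^\star\|^2$, and since $Ax^\star=b$ the left-hand side is exactly $f(x_k)-f(x^\star)-\iprod{\lambda^\star}{Ax_k-b}$. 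The zero-chain analysis of Lemma~\ref{lemma:index} and Property~\ref{optx} of Lemma~\ref{lemma:SCproperty} are untouched by the choice of $\mu_f$, so inequality \eqref{eqn:q2k} applies directly: $\|x_k-x^\star\|^2\ge \tfrac{q^{2K}}{4}D^2$ with $q=\tfrac{\sqrt{\kappa_f}-1}{\sqrt{\kappa_f}+1}$. Chaining the two displays yields $f(x_k)-f(x^\star)-\iprod{\lambda^\star}{Ax_k-b}\ge \tfrac{\mu_f}{8}\,q^{2K}D^2$.

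It then remains to show the tuned $\mu_f$ turns this into $\Theta(\kappa_A^2 L_f D^2/k^2)$. With $\kappa_f=(K+1)^2$ we have $q=\tfrac{K}{K+2}=1-\tfrac{2}{K+2}$, so using $\ln(1-x)\ge -x/(1-x)$ one gets $q^{2K}=(1-\tfrac{2}{K+2})^{2K}\ge e^{-4}$, a universal constant independent of $k$ and $\kappa_A$. Since $N=\lfloor(\kappa_A+1)/2\rfloor=\Theta(\kappa_A)$ and $K\le 2k/N$ (as in the proof of Theorem~\ref{lbd:sc}), we have $(K+1)^2=O(k^2/\kappa_A^2)$ and hence $\mu_f=L_f/(K+1)^2=\Omega(\kappa_A^2 L_f/k^2)$. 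Substituting $q^{2K}\ge e^{-4}$ and this bound on $\mu_f$ into $\tfrac{\mu_f}{8}q^{2K}D^2$ produces the claimed $c_0\,\kappa_A^2 L_f D^2/k^2$.

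I expect the only genuinely delicate point to be the tuning $\mu_f=L_f/(K+1)^2$: this is the standard ``optimize over the strong-convexity parameter'' reduction from a linear to a sublinear lower bound. Too small a $\mu_f$ destroys the prefactor, while too large a $\mu_f$ drives $q$ toward $1$ and makes the geometric factor $q^{2K}$ decay; the choice $\kappa_f=(K+1)^2$ is precisely the one balancing the two effects, keeping $q^{2K}=\Theta(1)$ while extracting the largest admissible $\mu_f$. Everything else is inherited mechanically from the strongly convex construction, so no new hard-instance design or zero-chain bookkeeping is required.
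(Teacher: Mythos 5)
Your proposal is correct and follows essentially the same route as the paper's proof: the same instance with $\mu_f=L_f/(K+1)^2$, the same conversion of the Lagrangian gap into $\tfrac{\mu_f}{2}\|x_k-x^\star\|^2$ via $A^\T\lambda^\star=\nabla f(x^\star)$ (your Lagrangian function $\ell$ is just a rephrasing of this), the same use of \eqref{eqn:q2k} with $q^{2K}\ge e^{-4}$, and the same bound $K+1=O(k/\kappa_A)$. No substantive difference.
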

\begin{proof}
    We can suitably choose $\alpha$ so that $\|x_0-\xs\|=D$. Then, under our choice of parameters, we have $\kappa_f=L_f/\mu_f=(K+1)^2$. Note that $q\geq \exp\left(-\frac{2}{\sqrt{\kappa_f}-1}\right)$, and hence $q^{2K}\geq e^{-4}$. Therefore, using the fact that $f$ is $\mu_f$-strongly convex and $A^\T \lams = \nabla f(\xs)$, we have
    \begin{align*}
        f(x_k)-f(x^\star)-\iprod{\lambda^\star}{Ax_k-b}
        =&~
        f(x_k)-f(x^\star)-\iprod{\nabla f(\xs)}{x_k-\xs} \\
        \geq&~
        \frac{\mu_f}{2}\left\|x_k-x^{\star}\right\|^2 
        \geq
        \frac{\mu_f q^{2K}}{8}\left\|x_0-x^{\star}\right\|^2 \\
        \geq&~
        \frac{e^{-4}}{8}\cdot \frac{L_f D^2}{(K+1)^2},
    \end{align*}
    where the last two inequalities are due to \eqref{eqn:q2k}.
    According to the choice of $K$ and $N$, we have $K+1\leq \frac{4k}{\kappa_A}$, and hence complete proof. \qed
\end{proof}

\begin{corollary}
    For any first-order algorithm in $\mathcal{A}$ and any $0<\epsilon\leq c_0L_fD^2$ where $c_0$ is defined in Theorem \ref{thm:CLB}, there exists a problem instance in $\mathcal{F}_\mathrm{C}(L_f,\kappa_A,D)$ such that at least 
    \[
        \Omega\left(\frac{\kappa_A \sqrt{L_f}D}{\sqrt{\epsilon}}\right)
    \]
    iterations are required in order to find an iterate $x_k$ satisfies $\suboptc(x_k)\leq\epsilon$ with $\rho\geq\nrm{\lams}$.
\end{corollary}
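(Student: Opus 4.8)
The plan is to reduce directly from Theorem~\ref{thm:CLB}, whose lower bound is already phrased in terms of the linearized gap $f(x_k)-f(\xs)-\iprod{\lams}{Ax_k-b}$. The only thing that needs to be established is that this linearized gap is dominated by $\suboptc(x_k)$ whenever $\rho\geq\nrm{\lams}$; once that is in place, inverting the bound to read off the iteration count is routine.

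First I would bound $\suboptc$ from below by the linearized gap. For the linear equality constrained instance the measure takes the form $\suboptc(x_k)=f(x_k)-f(\xs)+\rho\nrm{Ax_k-b}$, with $f(\xs)=\min_{x:Ax=b}f(x)$. By Cauchy--Schwarz and the standing assumption $\rho\geq\nrm{\lams}$,
\[
    -\iprod{\lams}{Ax_k-b}\leq\nrm{\lams}\,\nrm{Ax_k-b}\leq\rho\,\nrm{Ax_k-b}.
\]
Adding $f(x_k)-f(\xs)$ to both sides gives
\[
    f(x_k)-f(\xs)-\iprod{\lams}{Ax_k-b}\leq f(x_k)-f(\xs)+\rho\,\nrm{Ax_k-b}=\suboptc(x_k),
\]
and combining this with Theorem~\ref{thm:CLB} yields $\suboptc(x_k)\geq c_0\,\kappa_A^2 L_f D^2/k^2$.

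Next I would invert this inequality: to force $\suboptc(x_k)\leq\epsilon$ one needs $c_0\,\kappa_A^2 L_f D^2/k^2\leq\epsilon$, i.e. $k\geq\sqrt{c_0}\,\kappa_A\sqrt{L_f}D/\sqrt{\epsilon}$, which is exactly the claimed $\Omega\!\left(\kappa_A\sqrt{L_f}D/\sqrt{\epsilon}\right)$ bound. Finally I would check consistency with the hypotheses of Theorem~\ref{thm:CLB}, which applies for $k\geq\kappa_A$: substituting the target $k\asymp\kappa_A\sqrt{L_f}D/\sqrt{\epsilon}$, the constraint $k\geq\kappa_A$ holds precisely when $\epsilon\lesssim L_f D^2$, which is guaranteed by the assumption $\epsilon\leq c_0 L_f D^2$. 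There is no genuine obstacle here; the only points requiring care are the sign handling in the first step — the linear term $-\iprod{\lams}{Ax_k-b}$ may have either sign, so one must pass through $\nrm{\lams}\,\nrm{Ax_k-b}$ rather than discarding it — and the bookkeeping that ties the admissible range of $\epsilon$ to the index constraint $k\geq\kappa_A$.
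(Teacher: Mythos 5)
Your proposal is correct and follows exactly the route the paper intends: the corollary is stated without proof precisely because it is the combination of Theorem~\ref{thm:CLB} with the Cauchy--Schwarz observation that $-\iprod{\lams}{Ax_k-b}\leq\rho\nrm{Ax_k-b}$ for $\rho\geq\nrm{\lams}$, followed by inverting $c_0\kappa_A^2L_fD^2/k^2\leq\epsilon$, and your check that $\epsilon\leq c_0L_fD^2$ guarantees the admissibility constraint $k\geq\kappa_A$ is the right bookkeeping. The only point you leave implicit (as does the paper) is that the lower bound on the instance built for a given $k$ also applies to every earlier iterate $x_j$, $j\leq k$, which holds because the support sets $\mathcal{S}_j^x$ are nested, so Lemma~\ref{lemma:index} constrains $x_j$ at least as strongly as $x_k$.
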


\section{Linear equality constrained problem}\label{sec:linear-equality}
In this section, we focus on a special case of \eqref{comp-prob} when $h(\cdot)=\mathbb{I}_{\{0\}}(\cdot)$, corresponding to the linear equality constrained problem
\begin{equation}\label{equality-prob}
\min_x\ f(x),\quad\st\ Ax=b.
\end{equation}
In Section \ref{sec:sigmin}, it will be demonstrated that the requirement of full row rank for the matrix $A$ can be removed. 
In Section \ref{sec:direct-acceleration}, we will show that the optimal iteration complexity can be achieved by directly applying APPA on the convex problem.

\subsection{Refined result of linear equality constrained problem}\label{sec:sigmin}
First, we consider the strongly convex case. Similar with the derivation in Section \ref{sec:SC-ub}, problem \eqref{equality-prob} is equivalent to
\[
    \min_x\max_\lambda\ \mathcal{L}(x,\lambda)\defeq f(x)+\lambda^\T (Ax-b).
\]
Due to the strong duality, we have $\min_x\max_\lambda\ \mathcal{L}(x,\lambda)=\max_\lambda\min_x\ \mathcal{L}(x,\lambda)$. 
Accordingly, the corresponding dual problem can be written as 
\begin{equation}\label{eqn:dual}
    \max_\lambda\ \Phi(\lambda)\defeq -b^\T \lambda-f^\star(-A^\T \lambda).
\end{equation}
Without the assumption that $A$ is full row rank, the proof of Lemma \ref{lemma:Phi-SC} no longer holds and $\Phi(\lambda)$ is not necessarily strongly convex. Actually, we have the following property.
\begin{lemma}\label{lemma:Phi-SC2}
     Let $\sigmin$ be the nonzero minimal singular value of $A$. It holds that $\Phi(\lambda)$ is $(\sigmin^2/L_f)$-strongly concave in the subspace $\mathcal{R}(A)$.
\end{lemma}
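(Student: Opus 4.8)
The plan is to mirror the proof of Lemma~\ref{lemma:Phi-SC}, since the only place where the full row rank assumption was used there is the final singular value bound $\nrm{A^\T w}^2 \geq \sigmamin^2 \nrm{w}^2$. Without full row rank this bound fails on all of $\RR^m$, because $A^\T$ now has a nontrivial kernel. The key observation is that the bound survives once the direction $w$ is restricted to lie in $\mathcal{R}(A)$, and $\mathcal{R}(A)$ is precisely the subspace on which strong concavity is being claimed. So the whole argument reduces to a restricted singular value estimate plus a verbatim repetition of the earlier chain of inequalities.

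First I would establish the restricted bound: for every $w \in \mathcal{R}(A)$, $\nrm{A^\T w} \geq \sigmin \nrm{w}$. The cleanest route is the singular value decomposition $A = U\Sigma V^\T$. Writing $r = \mathrm{rank}(A)$ and letting $\sigma_1 \geq \cdots \geq \sigma_r = \sigmin > 0$ be the nonzero singular values, the range $\mathcal{R}(A)$ is spanned by the first $r$ left singular vectors $u_1,\dots,u_r$. Hence any $w \in \mathcal{R}(A)$ expands as $w = \sum_{i=1}^r c_i u_i$, giving $A^\T w = \sum_{i=1}^r c_i \sigma_i v_i$ and therefore
\begin{align*}
\nrm{A^\T w}^2 = \sum_{i=1}^r c_i^2 \sigma_i^2 \geq \sigmin^2 \sum_{i=1}^r c_i^2 = \sigmin^2 \nrm{w}^2.
\end{align*}
Equivalently, one may invoke the identity $\mathcal{R}(A)^\perp = \ker(A^\T)$, which shows that $A^\T$ is injective on $\mathcal{R}(A)$ with smallest nonzero singular value $\sigmin$.

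With this bound in hand, I would carry out the chain of inequalities from Lemma~\ref{lemma:Phi-SC}, but now only for $\lambda, \lambda' \in \mathcal{R}(A)$. Setting $\tilde f(\lambda) \defeq f^\star(-A^\T \lambda)$ and using that $f^\star$ is $\tfrac{1}{L_f}$-strongly convex (Lemma~\ref{lemma:conjugate}), I obtain for such $\lambda, \lambda'$
\begin{align*}
\tilde f(\lambda') - \tilde f(\lambda)
&\geq \iprod{\nabla \tilde f(\lambda)}{\lambda' - \lambda} + \frac{1}{2L_f}\nrm{A^\T(\lambda - \lambda')}^2 \\
&\geq \iprod{\nabla \tilde f(\lambda)}{\lambda' - \lambda} + \frac{\sigmin^2}{2L_f}\nrm{\lambda - \lambda'}^2,
\end{align*}
where the crucial last step applies the restricted bound to $w = \lambda - \lambda'$, which lies in $\mathcal{R}(A)$ because $\mathcal{R}(A)$ is a linear subspace. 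Since $\Phi(\lambda) = -b^\T\lambda - \tilde f(\lambda)$ and the affine term $-b^\T\lambda$ contributes nothing to the modulus of concavity, this shows $\Phi$ is $(\sigmin^2/L_f)$-strongly concave on $\mathcal{R}(A)$.

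The main obstacle is conceptual rather than computational: one must fix the meaning of \emph{strong concavity on the subspace $\mathcal{R}(A)$} as the two-point inequality restricted to $\lambda, \lambda' \in \mathcal{R}(A)$, and verify that the difference $\lambda - \lambda'$ remains inside $\mathcal{R}(A)$ so that the restricted estimate applies. The genuine new content is exactly the bound $\nrm{A^\T w} \geq \sigmin \nrm{w}$ on $\mathcal{R}(A)$, which replaces the global estimate that is no longer available once full row rank is dropped; everything else is inherited from the proof of Lemma~\ref{lemma:Phi-SC}.
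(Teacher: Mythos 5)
Your proposal is correct and follows essentially the same route as the paper: restrict $\lambda,\lambda'$ to $\mathcal{R}(A)$, use the $\tfrac{1}{L_f}$-strong convexity of $f^\star$ from Lemma~\ref{lemma:conjugate}, and replace the global singular value bound by the restricted estimate $\nrm{A^\T w}\geq \sigmin\nrm{w}$ for $w\in\mathcal{R}(A)$ (which the paper asserts without the SVD justification you spell out). The only cosmetic difference is that you phrase the inequality chain in terms of $\tilde f(\lambda)=f^\star(-A^\T\lambda)$ and then pass to $\Phi$, whereas the paper writes the concavity inequality for $\Phi$ directly.
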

\begin{proof}
Noting that $f(x)$ is $L_f$-smooth and $\mu_f$-strongly convex,
it is easy to derive that $f^\star(x)$ is $\frac 1{\mu_f}$-smooth and $\frac 1{L_f}$-strongly convex.
Then for any $\lambda,\lambda^\prime\in\mathcal{R}(A)$, we have 
\begin{align*} 
    \Phi(\lambda^\prime)-\Phi(\lambda) & = -\langle b,\lambda^\prime-\lambda\rangle-f^\star(-A^\T \lambda^\prime)+f^\star(-A^\T \lambda)\\
    & \leq -\langle b, \lambda^\prime-\lambda\rangle+\langle \nabla f^\star(-A^\T \lambda), A^\T \lambda^\prime-A^\T \lambda\rangle -\frac {\|A^\T \lambda-A^\T \lambda^\prime\|^2}{2L_f} \\
    & = \langle \nabla\Phi(\lambda), \lambda^\prime-\lambda\rangle-\frac 1{2L_f} \|A^\T \lambda-A^\T \lambda^\prime\|^2\\
    & \leq \langle \nabla\Phi(\lambda), \lambda^\prime-\lambda\rangle-\frac{\sigmin^2}{2L_f}\|\lambda-\lambda^\prime\|^2,
\end{align*}
where the last inequality holds because $\lambda,\lambda^\prime\in\mathcal{R}(A)$. \qed
\end{proof}
According to the update rule of Algorithm \ref{algo:SC-APPA},
if we set $\lambda_0=0$, then we have
\[
    \lambda_{k+1}\in \mathrm{Span}\left\{Ax_0-b, Ax_1-b,\cdots, Ax_k-b\right\}, k\geq 0.
\]
Due to the feasibility of the constraint $Ax=b$, we have $b\in\mathcal{R}(A)$. Hence it holds that the iterate sequence $\{\lambda_k\}$ always stays in $\mathcal{R}(A)$. Combining Lemma \ref{lemma:Phi-SC2}, we can treat $\Phi(\lambda)$ as a strongly concave function during the iterations. Then the derivation in Section \ref{sec:SC-ub} still holds by replacing $\sigmamin$ with $\sigmin$ and replacing $\kappa_A$ with $\underline\kappa_A\defeq\frac{\LA}{\sigmin}$. Consequently, the derived complexity upper bound is $\tilde O(\underline\kappa_A\sqrt{\kappa_f}\log(1/\epsilon))$.

For the convex problem, Algorithm \ref{algo:SC-APPA} is utilized to solve the strongly convex auxiliary problem \eqref{eqn:C-obj}, hence the upper bound can also be generalized to $\tilde O\left(\frac{\underline\kappa_A\sqrt{L_f}D}{\sqrt{\epsilon}}\right)$.
For the nonconvex problem, Algorithm \ref{algo:SC-APPA} is utilized to solve the subproblem \eqref{NC-PPA}. Therefore, the complexity upper bound can be similarly generalized to $\tilde O\left(\frac{\underline\kappa_A L_f \Delta}{\epsilon^2}\right)$.

\subsection{Direct acceleration for convex problem}\label{sec:direct-acceleration}
Recall that in Section \ref{sec:C-ub}, we derive the upper bound of the convex problem by constructing a strongly convex auxiliary problem. In this section, we propose an optimal algorithm, present in Algorithm \ref{APPA-convex}, that solves the original convex problem directly. The algorithm performs an inexact accelerated proximal point method on $f(x)$ while keeping the constraint intact in the outer loop. In the inner loop, it uses Algorithm \ref{algo:SC-APPA} to iteratively solve the subproblem until it meets the first-order suboptimality conditions.

\begin{algorithm2e}[H]\label{APPA-convex}
	\caption{Inexact APPA for convex problems}
    \textbf{Input:} initial point $x_0$, smoothness parameter $L_f$, condition number $\kappa_A$, subproblem error tolerances $\{\epsilon_k\}$ and $\{\gamma_k\}$, the maximum iteration number $T$.\\
	\textbf{Initialize:} $y_1\leftarrow x_0$, $t_1\leftarrow 1$.\\
    \For{$k=1\cdots T$}
    {
        Apply Algorithm \ref{algo:SC-APPA} to find a pair of suboptimal primal and dual variable $(x_k,\lambda_k)$ of problem
        \begin{equation}\label{proximal-point}
            \min_x f(x)+\frac {L_f}2\|x-y_k\|^2,\quad \st Ax=b,
        \end{equation}
        such that $\|\nabla f(x_k)+L_f(x_k-y_k)+A^\T\lambda_k\|\leq \sqrt{\frac {L_f}2}\cdot\frac{\epsilon_k}{t_k}$ and
        $\|Ax_k-b\|\leq \gamma_k$.\\
        Compute $t_{k+1}=\frac{1+\sqrt{1+4 t_k^2}}{2}$.\\
        Compute $y_{k+1}=x_k+\left(\frac{t_k-1}{t_{k+1}}\right)\left(x_k-x_{k-1}\right)$.
    }
    \textbf{Output:} $x_T$.
\end{algorithm2e}
The convergence proof is inspired by \cite{beck2009fast} and \cite{jiang2012inexact}.
Different from several work that studies the inexact accelerated PPA for the unconstrained convex problem, we have an additional linear equality constraint here. Since the projection onto the linear constraint is not allowed, we need to incorporate the violation of the constraint into the analysis, which makes the convergence proof quite different.
For simplicity, we define the following notations:
\[
    \nabla f(x_k)+L_f(x_k-y_k)+A^\T\lambda_k= \delta_k,~~ Ax_k-b= \zeta_k,
\]
\[
    v_k= f(x_k)-f(x^\star)+\langle\lambda^\star,Ax_k-b\rangle\geq 0,~~ u_k=t_k x_k-\left(t_k-1\right) x_{k-1}-x^\star,
\]
\[
    a_k=t_k^2 v_k \geq 0, ~~ b_k=\frac {L_f}2\|u_k\|^2, ~~ \tau = \frac {L_f}2\|x_0-x^\star\|^2, ~~ e_k=t_k\left\langle\delta_k, u_k\right\rangle,
\]
\[
    \omega_k=\|x_k-x^\star\|, ~~ \xi_k= |t_k\langle \lambda_k-\lambda^\star,Au_k\rangle|,~~ \bar{\epsilon}_k=\sum_{j=1}^k \epsilon_j,~~ \bar{\xi}_k=\sum_{j=1}^k\left(\xi_j+\epsilon_j^2\right).
\]

Due to the KKT condition, we know $Ax^\star=b$ and $A^\T \lambda^\star = \nabla f(x^\star)$. It follows that $v_k= f(x_k)-f(x^\star)-\langle \nabla f(x^\star), x-x^\star\rangle$, which corresponds to the Bregman divergence associated with $f$ for point $x_k$ and $x^\star$.
In other words, $v_k$ measures the distance between $x_k$ and $x^\star$ under the Bregman divergence and can somewhat serves as a suboptimality measure. In the following analysis, we will give an upper bound of $v_k$ (or $a_k$) first and eventually derive an upper bound for the objective function gap and the constraint violation.

According to the update rule $t_{k+1}=\frac{1+\sqrt{1+4 t_k^2}}{2}$ and $t_1=1$, it is easy to obtain the following lemma, which will be frequently used in the following proofs.
\begin{lemma}
    The sequence $\{t_k\}$ generated by Algorithm \ref{APPA-convex} satisfies
    $\frac{k+1}{2}\leq t_k\leq k $ for any $k\geq1$.
\end{lemma}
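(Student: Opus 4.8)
The plan is to prove both inequalities simultaneously by induction on $k$, using only the scalar recurrence $t_{k+1}=\frac{1+\sqrt{1+4t_k^2}}{2}$ with $t_1=1$; no structure of Algorithm \ref{APPA-convex} beyond this recurrence is needed. For the base case $k=1$ one checks directly that $\frac{1+1}{2}=1\leq t_1=1\leq 1$, so both bounds hold with equality.

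For the inductive step I would assume $\frac{k+1}{2}\leq t_k\leq k$ and deduce $\frac{k+2}{2}\leq t_{k+1}\leq k+1$. Since the update map $s\mapsto\frac{1+\sqrt{1+4s^2}}{2}$ is increasing for $s\geq 0$, I can substitute the inductive bounds on $t_k$ directly into each side. For the upper bound, plugging in $t_k\leq k$ gives $t_{k+1}\leq\frac{1+\sqrt{1+4k^2}}{2}$, and the target $t_{k+1}\leq k+1$ reduces, after squaring, to $\sqrt{1+4k^2}\leq 2k+1$, i.e. $0\leq 4k$, which is trivially true. For the lower bound, plugging in $t_k\geq\frac{k+1}{2}$ gives $t_{k+1}\geq\frac{1+\sqrt{1+(k+1)^2}}{2}$, and since $\sqrt{1+(k+1)^2}\geq k+1$ this is at least $\frac{1+(k+1)}{2}=\frac{k+2}{2}$.

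A slightly slicker route for the lower bound avoids the squaring altogether: from $\sqrt{1+4t_k^2}\geq 2t_k$ one gets $t_{k+1}\geq t_k+\frac12$, so telescoping from $t_1=1$ yields $t_k\geq 1+\frac{k-1}{2}=\frac{k+1}{2}$ in one line, and only the upper bound then needs the inductive argument above. It is also worth recording the identity obtained by squaring the recurrence, $(2t_{k+1}-1)^2=1+4t_k^2$, which simplifies to $t_{k+1}^2=t_k^2+t_{k+1}$; this confirms strict monotonicity of $\{t_k\}$ and is the form most convenient when these bounds are reused later in the convergence analysis.

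The argument is entirely elementary, so the ``main obstacle'' is only bookkeeping: the single point requiring a moment of care is verifying monotonicity of the update map, which legitimizes substituting the inductive hypothesis into both sides, together with keeping the two squaring steps straight. Everything reduces to one-variable algebraic inequalities, and nothing depends on the optimization problem \eqref{proximal-point} or the constants $\epsilon_k,\gamma_k$.
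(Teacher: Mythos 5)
Your proof is correct; the paper omits the argument entirely (stating only that the bounds are ``easy to obtain'' from the recurrence and $t_1=1$), and your induction — together with the one-line telescoping $t_{k+1}\geq t_k+\tfrac12$ for the lower bound — is exactly the standard argument one would supply. The identity $t_{k+1}^2=t_k^2+t_{k+1}$ you record is indeed the form the paper uses later in its convergence analysis, so nothing is missing.
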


In the following lemma, we give the one-step estimation of $a_k+b_k$.
\begin{lemma}
For any $k\geq1$, it holds
\begin{equation}\label{potential}
    a_{k+1}+b_{k+1}\leq a_k+b_k+\xi_{k+1}+e_{k+1}.
\end{equation}
\end{lemma}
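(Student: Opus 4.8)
The plan is to run the classical FISTA potential-function argument of Beck--Teboulle, adapted to carry both sources of inexactness: the stationarity residual $\delta_{k+1}$ and the feasibility residual $\zeta_{k+1}$. The starting point is the subproblem objective $q_{k+1}(x)\defeq f(x)+\frac{\Lf}{2}\nrm{x-y_{k+1}}^2$, which is $\Lf$-strongly convex since $f$ is convex and the quadratic contributes $\Lf I$ to the Hessian. Its inexact stationarity condition reads $\nabla q_{k+1}(x_{k+1})=\delta_{k+1}-A^\T\lambda_{k+1}$. Applying the strong-convexity lower bound of $q_{k+1}$ at $x_{k+1}$ against an arbitrary comparison point $z$ and simplifying the quadratic part through the elementary identity $\nrm{z-y}^2-\nrm{x-y}^2-\nrm{z-x}^2=2\iprod{z-x}{x-y}$, I would obtain the three-point inequality
\[
f(x_{k+1})-f(z)\le \Lf\iprod{z-x_{k+1}}{x_{k+1}-y_{k+1}}-\iprod{\delta_{k+1}}{z-x_{k+1}}+\iprod{\lambda_{k+1}}{Az-Ax_{k+1}},
\]
valid for every $z$.

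Next I would instantiate this at $z=x_k$ and at $z=\xs$. The dual term $\iprod{\lambda_{k+1}}{Az-Ax_{k+1}}$ becomes a pure feasibility-residual expression: for $z=x_k$ it equals $\iprod{\lambda_{k+1}}{\zeta_k-\zeta_{k+1}}$, and for $z=\xs$ (feasible, so $A\xs=b$) it equals $-\iprod{\lambda_{k+1}}{\zeta_{k+1}}$. Rewriting $f(x_{k+1})-f(x_k)$ and $f(x_{k+1})-f(\xs)$ in terms of $v_{k+1},v_k$ absorbs the $\lams$-terms and, crucially, upgrades every $\lambda_{k+1}$-coefficient to $\lambda_{k+1}-\lams$. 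This produces two inequalities, one bounding $v_{k+1}-v_k$ and one bounding $v_{k+1}$. The decisive step is then to form $(t_{k+1}-1)\times(\text{first})+1\times(\text{second})$ and multiply through by $t_{k+1}$, using the recursion identity $t_{k+1}(t_{k+1}-1)=t_k^2$ (which follows from $t_{k+1}=\frac{1+\sqrt{1+4t_k^2}}{2}$) so that the left-hand side collapses exactly to $a_{k+1}-a_k$.

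On the right-hand side, three groups must be reorganized. The geometric group assembles into $-\Lf\iprod{u_{k+1}}{t_{k+1}(x_{k+1}-y_{k+1})}$; here I would verify the identity $t_{k+1}(x_{k+1}-y_{k+1})=u_{k+1}-u_k$ directly from the momentum update $y_{k+1}=x_k+\frac{t_k-1}{t_{k+1}}(x_k-x_{k-1})$ and the definition of $u_k$, after which $-\Lf\iprod{u_{k+1}}{u_{k+1}-u_k}\le b_k-b_{k+1}$ by the bound $-\iprod{a}{a-c}\le\frac12(\nrm{c}^2-\nrm{a}^2)$. The $\delta_{k+1}$-group collapses to $t_{k+1}\iprod{\delta_{k+1}}{u_{k+1}}=e_{k+1}$, using that the weighted coefficients satisfy $(t_{k+1}-1)x_k+\xs-t_{k+1}x_{k+1}=-u_{k+1}$.

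The main obstacle, and the place where the argument genuinely departs from the unconstrained FISTA proof, is controlling the feasibility-residual group, because $x_{k+1}$ is only approximately feasible and the projection onto $\{Ax=b\}$ is unavailable. Writing $\zeta_k=A(x_k-\xs)$, I would establish the linear identity $(t_{k+1}-1)\zeta_k-t_{k+1}\zeta_{k+1}=-Au_{k+1}$, so that this group equals $-t_{k+1}\iprod{\lambda_{k+1}-\lams}{Au_{k+1}}$, which is dominated by its absolute value $\xi_{k+1}$. Collecting the three bounds $b_k-b_{k+1}$, $e_{k+1}$, and $\xi_{k+1}$ yields $a_{k+1}-a_k\le (b_k-b_{k+1})+e_{k+1}+\xi_{k+1}$, which rearranges to \eqref{potential}. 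The only delicate point throughout is consistent sign bookkeeping for the dual variables, ensuring the $\lambda_{k+1}$-terms merge correctly with the $\lams$-terms carried inside $v_k$.
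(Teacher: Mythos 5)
Your proof is correct and follows essentially the same route as the paper's: the inexact-FISTA potential argument with the descent inequality evaluated at $z=x_k$ and $z=x^\star$, combined with weights $(t_{k+1}-1,1)$, multiplied by $t_{k+1}$ using $t_{k+1}^2-t_{k+1}=t_k^2$, and with the feasibility residuals assembled via $(t_{k+1}-1)\zeta_k-t_{k+1}\zeta_{k+1}=-Au_{k+1}$ into the $\xi_{k+1}$ term. The only cosmetic difference is that you invoke the $L_f$-strong convexity of the subproblem objective and finish with the bound $-\langle a,a-c\rangle\le\frac12(\|c\|^2-\|a\|^2)$, whereas the paper uses only convexity of $f$, retains the $\frac{L_f}{2}\|x_{k+1}-y_{k+1}\|^2$ term, and applies the Pythagoras relation as an exact identity.
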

\begin{proof}
    For each $j$ and any $x$, we have 
    \begin{equation}\label{descent-bound}
        \begin{split}
        f(x)-f(x_j) &\geq \langle x-x_j, \nabla f(x_j)\rangle\\
        &= \langle x-x_j, \delta_j-L_f(x_j-y_j)-A^\T\lambda_j\rangle\\
        &\geq \frac {L_f}2\|x_j-y_j\|^2+L_f\langle x_j-y_j,y_j-x\rangle\\
        &\quad +\langle x-x_j,\delta_j-A^\T\lambda_j\rangle.
        \end{split}
    \end{equation}
Note that $v_k-v_{k+1}=f(x_k)-f(x_{k+1})+\langle\lambda^\star,Ax_k-Ax_{k+1}\rangle$.
We apply \eqref{descent-bound} with $j=k+1$ and $x = x_k$ to get
\begin{equation}\label{eq:xk}
    \begin{split}
    v_k-v_{k+1}&\geq \frac {L_f}2\|x_{k+1}-y_{k+1}\|^2+L_f\langle x_{k+1}-y_{k+1},y_{k+1}-x_k\rangle\\
    &\quad +\langle \zeta_k-\zeta_{k+1},\lambda^\star-\lambda_{k+1}\rangle+\langle x_k-x_{k+1},\delta_{k+1}\rangle,
    \end{split}
\end{equation}
where we utilize the fact that $Ax_k-Ax_{k+1}=\zeta_k-\zeta_{k+1}$.
Similarly, let $j=k+1$ and $x=x^\star$ in \eqref{descent-bound}, we obtain
\begin{equation}\label{eq:xstar}
    \begin{split}
    - v_{k+1} &= f(x^\star)-f(x_{k+1})-\langle\lambda^\star,Ax_{k+1}-b\rangle\\
    &\geq \frac {L_f}2\|x_{k+1}-y_{k+1}\|^2+L_f\langle x_{k+1}-y_{k+1},y_{k+1}-x^\star\rangle\\
    &\quad-\langle Ax^\star-Ax_{k+1},\lambda_{k+1}\rangle-\langle\lambda^\star,Ax_{k+1}-b\rangle+\langle x^\star-x_{k+1},\delta_{k+1}\rangle\\
    &= \frac {L_f}2\|x_{k+1}-y_{k+1}\|^2+L_f\langle x_{k+1}-y_{k+1},y_{k+1}-x^\star\rangle\\
    &\quad-\langle \zeta_{k+1},\lambda^\star-\lambda_{k+1}\rangle+\langle x^\star-x_{k+1},\delta_{k+1}\rangle,
\end{split}
\end{equation}
Multiply \eqref{eq:xk} by $(t_{k+1}-1)$ and add it to \eqref{eq:xstar}.
Then, multiply $t_{k+1}$ on the both side of the obtained inequality and notice the relation $t_k^2=t_{k+1}^2-t_{k+1}$, we have
\begin{align*} 
    t_k^2 v_k-t_{k+1}^2v_{k+1}&\geq \frac {L_f}2\|t_{k+1}(x_{k+1}-y_{k+1})\|^2\\
    &\quad+L_ft_{k+1}\langle x_{k+1}-y_{k+1}, t_{k+1}y_{k+1}-(t_{k+1}-1)x_k-x^\star\rangle\\
    &\quad+\langle \lambda^\star-\lambda_{k+1},t_k^2\zeta_k-t_{k+1}^2\zeta_{k+1}\rangle-t_{k+1}\langle \delta_{k+1},u_{k+1}\rangle.
\end{align*}
For the first two terms on the right hand side of the above inequality,
we use the usual Pythagoras relation $\|{b}-{a}\|^2+2\langle{b}-{a}, {a}-{c}\rangle=\|{b}-{c}\|^2-\|{a}-{c}\|^2$,
then substitute $t_{k+1} y_{k+1}=t_{k+1} x_k+\left(t_k-1\right)\left(x_k-x_{k-1}\right)$ into it.
After rearranging, we obtain
\[
    t_{k+1}^2v_{k+1}+\frac {L_f}2\|u_{k+1}\|^2\leq t_k^2 v_k+\frac {L_f}2\|u_k\|^2-\langle \lambda^\star-\lambda_{k+1},t_k^2\zeta_k-t_{k+1}^2\zeta_{k+1}\rangle+e_{k+1}.
\]
Combining the fact $t_{k+1}Au_{k+1}=t_{k+1}^2\zeta_{k+1}-t_{k}^2\zeta_{k}$ yields the conclusion. \qed
\end{proof}

Now, having the inequality \eqref{potential}, we can derive an upper bound of $a_k$.
\begin{lemma}\label{lemma:ak}
    For any $k\geq 1$, it holds
    \begin{equation} 
        \label{eqn:ak} 
        a_k \leq\left(\sqrt{\tau}+\bar{\epsilon}_k\right)^2+2 \bar{\xi}_k.
    \end{equation}
\end{lemma}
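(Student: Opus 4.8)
The plan is to convert the one-step estimate \eqref{potential} into a self-referential bound on $a_k+b_k$ and then close it with a Bihari/Gr\"onwall-type argument of the Schmidt--Le Roux--Bach flavour. First I would record a base case for $k=1$. Since $t_1=1$, we have $u_1=x_1-x^\star$, $a_1=v_1$ and $b_1=\frac{L_f}{2}\|x_1-x^\star\|^2$. Applying the descent inequality \eqref{descent-bound} with $j=1$ and $x=x^\star$, using $y_1=x_0$, the KKT relation $A^\T\lambda^\star=\nabla f(x^\star)$ implicit in $v_1$, the Pythagoras identity $\frac{L_f}{2}\|x_1-x_0\|^2+L_f\langle x_1-x_0,x_0-x^\star\rangle=\frac{L_f}{2}\|x_1-x^\star\|^2-\tau$, and $Au_1=\zeta_1$, I would obtain $a_1+b_1\le\tau+\xi_1+e_1$. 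Telescoping \eqref{potential} from $1$ to $k-1$ and adding the base case then gives the master inequality $a_k+b_k\le\tau+\sum_{j=1}^k\xi_j+\sum_{j=1}^k e_j$.

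Next I would estimate the error terms. From $\|\delta_j\|\le\sqrt{L_f/2}\,\epsilon_j/t_j$ and $b_j=\frac{L_f}{2}\|u_j\|^2$ one gets $e_j=t_j\langle\delta_j,u_j\rangle\le\epsilon_j\sqrt{b_j}\le\epsilon_j\sqrt{a_j+b_j}$. Writing $w_j=\sqrt{a_j+b_j}$ and $S_k=\tau+\sum_{j=1}^k\xi_j$, which is nondecreasing because $\xi_j\ge0$, the master inequality becomes $w_k^2\le S_k+\sum_{j=1}^k\epsilon_j w_j$ for every $k$. I would then run the standard maximal-element argument: letting $m\le k$ attain $W_k:=\max_{1\le j\le k}w_j$ and using monotonicity of $S$ together with $\sum_{j\le m}\epsilon_j\le\bar\epsilon_k$, one finds $W_k^2\le S_k+\bar\epsilon_k W_k$; solving this quadratic in $W_k$ yields $w_k\le W_k\le\frac12\bar\epsilon_k+\sqrt{S_k+\frac14\bar\epsilon_k^2}$.

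Since $a_k\le a_k+b_k=w_k^2$, it then remains to show that this refined bound is dominated by the clean target, i.e.
\[
\left(\tfrac12\bar\epsilon_k+\sqrt{\tau+\textstyle\sum_{j}\xi_j+\tfrac14\bar\epsilon_k^2}\right)^2\le(\sqrt{\tau}+\bar\epsilon_k)^2+2\bar\xi_k .
\]
I expect this algebraic reconciliation to be the main obstacle. The tempting move of splitting the root via $\sqrt{a+b+c}\le\sqrt a+\sqrt b+\sqrt c$ (followed by Young's inequality) is too lossy and in fact fails in the error-dominated regime, precisely because of the mismatch between $\bar\epsilon_k^2=(\sum_j\epsilon_j)^2$ and $\sum_j\epsilon_j^2$. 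Instead I would isolate the cross term and reduce the display to $\bar\epsilon_k\sqrt{\tau+\sum_j\xi_j+\frac14\bar\epsilon_k^2}\le 2\sqrt{\tau}\,\bar\epsilon_k+\frac12\bar\epsilon_k^2+\sum_j\xi_j+2\sum_j\epsilon_j^2$.

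Both sides of the reduced inequality are nonnegative, so I would square them directly rather than splitting the square root. After cancelling the common term $\bar\epsilon_k^2\bigl(\tau+\sum_j\xi_j+\frac14\bar\epsilon_k^2\bigr)$, every surviving monomial is a nonnegative product of powers of $\sqrt{\tau}$, $\sqrt{\sum_j\xi_j}$, $\bar\epsilon_k$ and $\sum_j\epsilon_j^2$, whence the inequality holds term by term. This route simultaneously explains why the correction $\sum_j\epsilon_j^2$ appears inside $\bar\xi_k$ and sidesteps the $(\sum_j\epsilon_j)^2$-versus-$\sum_j\epsilon_j^2$ difficulty; combining it with the refined Gr\"onwall bound gives $a_k\le(\sqrt{\tau}+\bar\epsilon_k)^2+2\bar\xi_k$, as claimed.
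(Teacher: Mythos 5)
Your proof is correct, and the closing argument is genuinely different from the paper's. The setup coincides: both establish the base case $a_1+b_1\le\tau+\xi_1+e_1$ from \eqref{descent-bound} and telescope \eqref{potential} into a master inequality. The divergence is in how the implicit bound is resolved. The paper bounds $e_k\le\epsilon_k\sqrt{b_k}$, collects the errors into $s_k=\sum_i\xi_i+\sum_i\epsilon_i\sqrt{b_i}$ so that $a_k+b_k\le\tau+s_k$ (see \eqref{eqn:repeat}), and then runs a recursion on $s_k$ itself: solving a quadratic at each step to get \eqref{eqn:sk}, telescoping, and solving one more quadratic in $\sqrt{s_k}$ to reach \eqref{eqn:sqrtsk}. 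You instead set $w_j=\sqrt{a_j+b_j}$, use the slightly weaker bound $e_j\le\epsilon_j w_j$, and close the self-referential inequality $w_k^2\le S_k+\sum_j\epsilon_j w_j$ in one shot via the maximal-element (discrete Bihari/Gr\"onwall) trick, finishing with a single direct squaring; your observation that the naive splitting $\sqrt{a+b+c}\le\sqrt a+\sqrt b+\sqrt c$ fails because of the $(\sum_j\epsilon_j)^2$ versus $\sum_j\epsilon_j^2$ mismatch is accurate, and your squaring argument does verify term by term (indeed $4\tau\bar\epsilon_k^2+\frac14\bar\epsilon_k^4+\bar\epsilon_k^2\sum_j\xi_j$ already dominates the left-hand square, so you in fact prove the marginally stronger bound $a_k\le(\sqrt{\tau}+\bar\epsilon_k)^2+2\sum_j\xi_j$ without needing the $\epsilon_j^2$ correction inside $\bar\xi_k$). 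What your cleaner route gives up is the collection of intermediate estimates the paper's recursion produces along the way --- in particular $\sqrt{b_k}\le\sqrt{\tau}+\sqrt{s_k}$ and the explicit bound \eqref{eqn:sqrtsk} on $\sqrt{s_k}$ --- which are reused verbatim in the proof of Theorem \ref{thm:convex} to control $\omega_k$ by induction; if you adopted your argument you would need to re-derive analogous bounds on $b_k$ there.
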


\begin{proof}
By applying \eqref{descent-bound} with $x=x^\star$ and $j=1$, we have
\begin{align*}
    f(x^\star)-f(x_1) &\geq \frac {L_f}2\|x_1-y_1\|^2+L_f\langle x_1-y_1,y_1-x^\star\rangle\\
    &\quad-\langle Ax^\star-Ax_1,\lambda_1\rangle+\langle x^\star-x_1,\delta_1\rangle\\
    &= \frac {L_f}2\|x_1-x^\star\|^2-\frac {L_f}2\|y_1-x^\star\|^2\\
    &\quad-\langle Ax^\star-Ax_1,\lambda_1\rangle+\langle x^\star-x_1,\delta_1\rangle,
\end{align*}
where we utilize the usual Pythagoras relation $\|{b}-{a}\|^2+2\langle{b}-{a}, {a}-{c}\rangle=\|{b}-{c}\|^2-\|{a}-{c}\|^2$.
Noting that $y_1=x_0$, $t_1=1$ and $u_1=x_1-x^\star$, we have
\[
    t_1^2v_1 +\frac {L_f}2\|u_1\|^2 \leq \frac {L_f}2\|x_0-x^\star\|^2+t_1\langle \lambda^\star-\lambda_1, Au_1\rangle+e_1.
\]
Since $e_k\leq (t_k\sqrt{2/L_f}\|\delta_k\|)(\sqrt{L_f/2}\|u_k\|)\leq\epsilon_k\sqrt{b_k}$, the above inequality implies
\begin{equation}\label{potential1}
    a_1+b_1\leq \tau +\xi_1 + \epsilon_1\sqrt{b_1}.
\end{equation}
Let $s_k=\sum_{i=1}^{k} \xi_k+\sum_{i=1}^k \epsilon_i\sqrt{b_i}$.
Utilizing \eqref{potential} repeatedly and combining \eqref{potential1}, we obtain
\begin{equation}\label{eqn:repeat}
    a_k+b_k \leq \tau+s_k.
\end{equation}
Since $a_k\geq 0$, we have $s_k=s_{k-1}+\epsilon_k \sqrt{b_k}+\xi_k \leq s_{k-1}+\epsilon_k \sqrt{\tau+s_k}+\xi_k$, 
which implies
\[
    \sqrt{\tau+s_k} \leq \frac{1}{2}\left(\epsilon_k+\sqrt{\epsilon_k^2+4\left(\tau+s_{k-1}+\xi_k\right)}\right).
\]
By some simple derivations, we get
\begin{equation}\label{eqn:sk}
    \begin{split}
        s_k & \leq s_{k-1}+\frac{1}{2} \epsilon_k^2+\xi_k+\frac{1}{2} \epsilon_k \sqrt{\epsilon_k^2+4\left(\tau+s_{k-1}+\xi_k\right)} \\
        & \leq s_{k-1}+\frac{1}{2} \epsilon_k^2+\xi_k+\frac{1}{2} \epsilon_k \left(\epsilon_k+2\sqrt{\tau}+2\sqrt{s_{k-1}+\xi_k}\right) \\
        & \leq s_{k-1}+\epsilon_k^2+\xi_k+\epsilon_k\left(\sqrt{\tau}+\sqrt{s_{k-1}+\xi_k}\right).
    \end{split}
\end{equation}
From the inequality \eqref{potential1}, we know that $\tau \geq b_1-\epsilon_1 \sqrt{b_1}-\xi_1$, 
which follows $\sqrt{b_1} \leq \frac{1}{2}\left(\epsilon_1+\sqrt{\epsilon_1^2+4\left(\tau+\xi_1\right)}\right) \leq \epsilon_1+\sqrt{\tau+\xi_1}$.
Accordingly, 
\begin{equation}\label{eqn:s1}
    s_1=\epsilon_1 \sqrt{b_1}+\xi_1 \leq \epsilon_1\left(\epsilon_1+\sqrt{\tau+\xi_1}\right)+\xi_1 \leq \epsilon_1^2+\xi_1+\epsilon_1\left(\sqrt{\tau}+\sqrt{\xi_1}\right).
\end{equation}
Applying \eqref{eqn:sk} repeatedly and combining \eqref{eqn:s1} yields
\begin{align*}
    s_k & \leq s_1+\sum_{j=2}^k \epsilon_j^2+\sum_{j=2}^k \xi_j+\sqrt{\tau} \sum_{j=2}^k \epsilon_j+\sum_{j=2}^k \epsilon_j \sqrt{s_{j-1}+\xi_j} \\
    & \leq \sum_{j=1}^k \epsilon_j^2+\sum_{j=1}^k \xi_j+\sqrt{\tau} \sum_{j=1}^k \epsilon_j+\sum_{j=1}^k \epsilon_j \sqrt{s_j} \\
    & \leq \bar{\xi}_k+\sqrt{\tau} \bar{\epsilon}_k+\sqrt{s_k} \bar{\epsilon}_k,
\end{align*}
where the second inequality holds because $s_{j-1}+\xi_j\leq s_j$ and $\xi_1\leq s_1$ by the definition of $s_k$. The above inequality implies
\begin{equation}\label{eqn:sqrtsk}
    \sqrt{s_k} \leq \frac{1}{2}\left(\bar{\epsilon}_k+\left(\bar{\epsilon}_k^2+4 \bar{\xi}_k+4 \bar{\epsilon}_k \sqrt{\tau}\right)^{1 / 2}\right),
\end{equation}
and hence $s_k \leq \bar{\epsilon}_k^2+2 \bar{\xi}_k+2 \bar{\epsilon}_k \sqrt{\tau}$.
Substituting it into \eqref{eqn:repeat}, we obtain
\begin{equation*}
    a_k \leq  \tau+\bar{\epsilon}_k^2+2 \bar{\xi}_k+2 \bar{\epsilon}_k \sqrt{\tau}\leq\left(\sqrt{\tau}+\bar{\epsilon}_k\right)^2+2 \bar{\xi}_k,
\end{equation*}
which completes the proof. \qed
\end{proof}

Note that the right hand side of \eqref{eqn:ak} depends on $\xi_k$, which is still unclear. We further estimate this term in the following lemma.

\begin{lemma}\label{lemma:xik}
    Assume that $\gamma_k\leq1$, $\|\delta_k\|\leq 1$ and $t_{k+1}^2\gamma_{k+1}\leq t_k^2\gamma_k$ for any $k\geq 1$, we have
    \begin{align}
        \label{eqn:xi1} \xi_1 &\leq \frac{1}{\sigmin^2}(L_f\LA\omega_1+L_f(\LA D+1)+\LA)t_1^2\gamma_1,\\
        \label{eqn:xik} \xi_{k} &\leq  \frac2{\sigmin^2}(L_f\LA \omega_{k}+4L_f +\LA ) t_{k-1}^2\gamma_{k-1}, \quad\forall k\geq 2.
    \end{align}
\end{lemma}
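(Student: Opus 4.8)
The plan is to reduce each $\xi_k$ to the product of the dual error $\|\lambda_k-\lambda^\star\|$ with a constraint-violation quantity, and then to bound $\|\lambda_k-\lambda^\star\|$ via the optimality residual $\delta_k$ together with the minimal nonzero singular value $\sigmin$.

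First I would rewrite $t_kAu_k$. From the identity used at the end of the proof of \eqref{potential}, namely $t_{k+1}Au_{k+1}=t_{k+1}^2\zeta_{k+1}-t_k^2\zeta_k$, a shift of index gives $t_kAu_k=t_k^2\zeta_k-t_{k-1}^2\zeta_{k-1}$ for $k\geq2$, while for $k=1$ the relations $t_1=1$, $u_1=x_1-x^\star$ and $Ax^\star=b$ give $t_1Au_1=\zeta_1$. Applying Cauchy--Schwarz to $\xi_k=|\langle\lambda_k-\lambda^\star,\,t_kAu_k\rangle|$, using $\|\zeta_j\|\leq\gamma_j$ and the monotonicity assumption $t_k^2\gamma_k\leq t_{k-1}^2\gamma_{k-1}$, I obtain $\xi_1\leq\|\lambda_1-\lambda^\star\|\,\gamma_1$ and $\xi_k\leq 2\|\lambda_k-\lambda^\star\|\,t_{k-1}^2\gamma_{k-1}$ for $k\geq2$. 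Comparing with the targets, it then suffices to prove $\|\lambda_k-\lambda^\star\|\leq\sigmin^{-2}\big(L_f\LA\omega_k+4L_f+\LA\big)$ for $k\geq2$, and the analogous $k=1$ estimate.

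The core step is the dual error bound. Since $\lambda_0=0$ forces every iterate into $\mathcal{R}(A)$ (Section~\ref{sec:sigmin}) and we may take $\lambda^\star\in\mathcal{R}(A)$, the difference $\lambda_k-\lambda^\star$ lies in $\mathcal{R}(A)$, whence $\|\lambda_k-\lambda^\star\|\leq\sigmin^{-1}\|A^\T(\lambda_k-\lambda^\star)\|$. Using $\delta_k=\nabla f(x_k)+L_f(x_k-y_k)+A^\T\lambda_k$ and the KKT relation $A^\T\lambda^\star=\nabla f(x^\star)$, I write
\[
    A^\T(\lambda_k-\lambda^\star)=\delta_k-\big(\nabla f(x_k)-\nabla f(x^\star)\big)-L_f(x_k-y_k).
\]
Because the left-hand side lies in $\mathcal{R}(A^\T)$, the orthogonal complement of $\ker(A)$, I may project the right-hand side onto $\mathcal{R}(A^\T)$ before taking norms; this is the device that replaces the uncontrolled full norm $\|x_k-y_k\|$ by $\sigmin^{-1}\|A(x_k-y_k)\|$, since the projection $P$ onto $\mathcal{R}(A^\T)$ obeys $\|P(x_k-y_k)\|\leq\sigmin^{-1}\|A(x_k-y_k)\|$. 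Combined with $\|\delta_k\|\leq1$ and the smoothness estimate $\|\nabla f(x_k)-\nabla f(x^\star)\|\leq L_f\omega_k$, this yields $\|A^\T(\lambda_k-\lambda^\star)\|\leq 1+L_f\omega_k+\tfrac{L_f}{\sigmin}\|A(x_k-y_k)\|$.

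Finally I bound $\|A(x_k-y_k)\|$ through the constraint violations. Substituting $y_k=x_{k-1}+\tfrac{t_{k-1}-1}{t_k}(x_{k-1}-x_{k-2})$ and $Ax_j-b=\zeta_j$ gives $A(x_k-y_k)=\zeta_k-\zeta_{k-1}-\tfrac{t_{k-1}-1}{t_k}(\zeta_{k-1}-\zeta_{k-2})$; since $\tfrac{t_{k-1}-1}{t_k}<1$ and each $\gamma_j\leq1$, this is at most $\gamma_k+2\gamma_{k-1}+\gamma_{k-2}\leq4$ for $k\geq2$ (the boundary case $k=2$ is harmless, as $t_1-1=0$ annihilates the $\zeta_0$ term). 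For $k=1$, instead $y_1=x_0$, so $A(x_1-y_1)=\zeta_1-(Ax_0-b)$ with $\|Ax_0-b\|=\|A(x_0-x^\star)\|\leq\LA D$, giving $\|A(x_1-y_1)\|\leq\gamma_1+\LA D\leq1+\LA D$. Plugging these in, dividing by $\sigmin$, and using $\LA/\sigmin\geq1$ to absorb $1\leq\LA/\sigmin$ and $L_f\omega_k/\sigmin\leq L_f\LA\omega_k/\sigmin^2$ produces exactly \eqref{eqn:xi1} and \eqref{eqn:xik}. I expect the projection step—recognizing that only the $\mathcal{R}(A^\T)$-component of $x_k-y_k$ survives, so that the null-space part (which no constraint controls) drops out—to be the crux; the remainder is careful bookkeeping of the $k=1,2$ boundary cases and of the $\LA/\sigmin$ factors.
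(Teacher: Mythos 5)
Your proof is correct and follows essentially the same route as the paper: both reduce $\xi_k$ to $\|\lambda_k-\lambda^\star\|\cdot\|t_k^2\zeta_k-t_{k-1}^2\zeta_{k-1}\|$, use the residual identity $A^\T(\lambda^\star-\lambda_k)=\nabla f(x_k)-\nabla f(x^\star)+L_f(x_k-y_k)-\delta_k$, and control $A(x_k-y_k)$ by the accumulated constraint violations. The only (cosmetic) difference is that the paper left-multiplies the identity by $A$ and invokes $\|AA^\T\mu\|\geq\sigmin^2\|\mu\|$ for $\mu\in\mathcal{R}(A)$, whereas you project onto $\mathcal{R}(A^\T)$ and use $\|A^\T\mu\|\geq\sigmin\|\mu\|$; both devices isolate the controllable component of $x_k-y_k$ and yield the same constants after relaxing with $\sigmin\leq\LA$.
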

\begin{proof}
By the update rule $y_{k+1}=x_k+\left(\frac{t_k-1}{t_{k+1}}\right)\left(x_k-x_{k-1}\right)$, it holds
\begin{align*}
    Ax_{k+1}-Ay_{k+1}=\zeta_{k+1}-\left(1+\frac{t_k-1}{t_{k+1}}\right)\zeta_k+\frac{t_k-1}{t_{k+1}} \zeta_{k-1}.
\end{align*}
Thus, we have $\|Ax_{k+1}-Ay_{k+1}\|\leq 4$ due to the fact that $\gamma_k\leq1$ and $1\leq t_k\leq t_{k+1}$. 
Combining the KKT condition $\nabla f(x^\star)+A^\T\lambda^\star=0$ with the definition of $\delta_{k+1}$, we obtain
\[
    A^\T(\lambda^\star-\lambda_{k+1})=\nabla f(x_{k+1})-\nabla f(x^\star)+L_f(x_{k+1}-y_{k+1})-\delta_{k+1}.
\]
Accordingly, it follows
\begin{equation}\label{eqn:AATlambda}
    \begin{split}
    &\|AA^\T(\lambda^\star-\lambda_{k+1})\|\\
    =& ~\|A(\nabla f(x_{k+1})-\nabla f(x^\star))+L_f(Ax_{k+1}-Ay_{k+1})-A\delta_{k+1}\|\\
    \leq & ~{L_f\LA \|x_{k+1}-x^\star\|}+4L_f +\LA ,
    \end{split}
\end{equation}
where the inequality uses the smoothness of $f$ and $\|\delta_k\|\leq 1$.
Since $\lambda^\star-\lambda_{k+1}\in\mathcal{R}(A)$,
the above inequality actually implies
\[
    \|\lambda^\star-\lambda_{k+1}\|\leq\frac1{\sigmin^2}\left({L_f\LA \|x_{k+1}-x^\star\|}+4L_f +\LA \right).
\]
It follows that for any $k\geq1$, 
\begin{equation}\label{xiub}
    \begin{split}
        \xi_{k+1}&=|\langle\lambda^\star-\lambda_{k+1},t_{k+1}Au_{k+1}\rangle|\\
        &\leq  \frac1{\sigmin^2}(L_f\LA \|x_{k+1}-x^\star\|+4L_f +\LA ) \|t_k^2\zeta_k-t_{k+1}^2\zeta_{k+1}\|\\
        &\leq  \frac2{\sigmin^2}(L_f\LA \|x_{k+1}-x^\star\|+4L_f +\LA ) t_k^2\gamma_k,
    \end{split}
\end{equation}
where the last inequality uses the $t_{k+1}^2\gamma_{k+1}\leq t_k^2\gamma_k^2$.

Similar to the derivations in \eqref{eqn:AATlambda} and \eqref{xiub}, we can also obtain
\begin{align*}
    &\|AA^\T(\lambda^\star-\lambda_1)\|\\
    =&~\|A(\nabla f(x_1)-\nabla f(x^\star))+L_f(Ax_1-Ay_1)-A\delta_1\|\\
    =&~\|A(\nabla f(x_1)-\nabla f(x^\star))+L_f(Ax_1-b)-L_f(Ax_0-Ax^\star)-A\delta_1\|\\
    \leq&~ L_f\LA \omega_1+L_f\gamma_1+L_f\LA D +\LA \|\delta_1\|\\
    \leq&~ L_f\LA \omega_1+L_f+L_f\LA D +\LA .
\end{align*}
Note that $u_1=x_1-x^\star$, we have $Au_1=Ax_1-b$ and 
\[
    \xi_1 \leq t_1^2\gamma_1\|\lambda_1-\lambda^\star\|\leq \frac{1}{\sigmin^2}(L_f\LA \omega_1+L_f(\LA D+1)+\LA )t_1^2\gamma_1,
\]
which completes the proof. \qed
\end{proof}

According to Lemma \ref{lemma:ak} and Lemma \ref{lemma:xik}, it remains an estimate of $\omega_k$ to get the upper bound of $a_k$.
In the discussion that follows, we prove that $\omega_k$ can be bounded by a linear increasing sequence.
Utilizing this property, we give the requirements on $\epsilon_k$ and $\gamma_k$, and obtain the following result.

\begin{theorem}\label{thm:convex}
    Suppose that $f(x)$ is convex, $\|x_0-x^\star\|\leq D$, the matrix $A$ satisfies $\nrm{A}\leq\LA$ and the minimum nonzero singular value of $A$ is no smaller than $\sigmin$. In Algorithm \ref{APPA-convex}, we set the subproblem error tolerances as $\epsilon_k \leq \min\left\{\frac{\sqrt{L_f}D}{2\sqrt{2}k^2},\sqrt{\frac 2{L_f}}\right\}$ and $\gamma_k\leq \min\left\{\frac{\sigmin^2L_fD^2}{8(L_f\LA (\varpi+D)+4L_f +\LA )},1\right\}\frac{1}{t_k^2(k+1)^3}$ where $\varpi$ is a constant defined in \eqref{def:varpi}. Let $\{x_k\}$ be the iterate sequence generated by Algorithm \ref{APPA-convex}. It holds that
    \[
        v_k\leq \frac{16L_fD^2}{(k+1)^2}.
    \]
\end{theorem}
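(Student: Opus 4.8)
The plan is to feed the two already-completed estimates into each other: Lemma \ref{lemma:ak} controls $a_k=t_k^2v_k$ through $\bar{\xi}_k$, while Lemma \ref{lemma:xik} controls each $\xi_k$ through the distance $\omega_k=\nrm{x_k-x^\star}$. The only missing ingredient is therefore a bound on $\omega_k$, and the difficulty is that $\omega_k$ and $\bar{\xi}_k$ are coupled: $\omega_k$ is governed by $b_k$, $b_k$ is bounded by $\bar{\xi}_k$ through \eqref{eqn:repeat}--\eqref{eqn:sqrtsk}, and $\bar{\xi}_k$ in turn depends on $\omega_k$ via Lemma \ref{lemma:xik}. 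Breaking this circular dependence is the heart of the proof.

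First I would derive a clean recursion for $\omega_k$. Writing $u_k=t_k(x_k-x^\star)-(t_k-1)(x_{k-1}-x^\star)$ and using the identity $t_k(t_k-1)=t_{k-1}^2$ (a direct consequence of the update $t_{k+1}=\tfrac{1+\sqrt{1+4t_k^2}}{2}$), the triangle inequality gives $t_k\omega_k\le\nrm{u_k}+(t_k-1)\omega_{k-1}$, and multiplying by $t_k$ turns this into the telescoping bound
\[
t_k^2\omega_k\le t_k\nrm{u_k}+t_{k-1}^2\omega_{k-1}\quad\Longrightarrow\quad t_k^2\omega_k\le\omega_1+\sum_{j=2}^{k}t_j\nrm{u_j}.
\]
Since $b_j=\tfrac{L_f}{2}\nrm{u_j}^2$, and \eqref{eqn:repeat} together with the estimate $s_k\le\bar{\epsilon}_k^2+2\bar{\xi}_k+2\bar{\epsilon}_k\sqrt{\tau}$ extracted from \eqref{eqn:sqrtsk} give $b_j\le(\sqrt{\tau}+\bar{\epsilon}_j)^2+2\bar{\xi}_j$, I can bound $\nrm{u_j}\le\sqrt{2/L_f}\,\big((\sqrt{\tau}+\bar{\epsilon}_j)+\sqrt{2\bar{\xi}_j}\big)$; with $t_j\le j$ this makes $t_k^2\omega_k$ at most a constant multiple of $D^2k^2$, once $\bar{\xi}_j$ is shown to be $O(L_fD^2)$, hence $\omega_k=O(D)$ uniformly.

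The main obstacle is to justify the $O(L_fD^2)$ bound on $\bar{\xi}_k$ \emph{without} presupposing the bound on $\omega_k$, and I would do this by strong induction, postulating $\omega_j\le\varpi$ for all $j\le k$ with the uniform constant $\varpi$ as in the statement. The key leverage is the smallness of $t_{j-1}^2\gamma_{j-1}$: the choice $\gamma_k\le\frac{\sigmin^2 L_fD^2}{8(L_f\LA(\varpi+D)+4L_f+\LA)}\cdot\frac{1}{t_k^2(k+1)^3}$ is designed precisely so that, substituting $\omega_j\le\varpi$ into \eqref{eqn:xik}, one gets $\xi_j\le\frac{L_fD^2}{4j^3}$, a summable sequence, and likewise $\xi_1\le\frac{L_fD^2}{4}$ from \eqref{eqn:xi1}. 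The delicate point is that $\xi_k$ itself contains $\omega_k$, so the quantity being bounded formally appears on both sides; but because $t_{k-1}^2\gamma_{k-1}=O(1/k^3)$, the self-referential term enters $\nrm{u_k}^2$ only with a coefficient of order $\omega_k D^2/((\varpi+D)k^3)$, which after dividing the telescoped inequality by $t_k^2\ge(k+1)^2/4$ is negligible and can be absorbed into $\varpi$. This fixes $\varpi$ as a solution of the resulting (nearly linear) inequality and closes the induction, yielding $\bar{\xi}_k=\sum_j\xi_j+\sum_j\epsilon_j^2=O(L_fD^2)$.

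Finally I would assemble the pieces. The choice $\epsilon_k\le\frac{\sqrt{L_f}D}{2\sqrt{2}\,k^2}$ gives $\bar{\epsilon}_k\le\frac{\sqrt{L_f}D}{2\sqrt{2}}\sum_j j^{-2}\le\sqrt{\tau}$ (using $\tau\le\tfrac{L_f}{2}D^2$ and $\sum_j j^{-2}\le 2$), so $(\sqrt{\tau}+\bar{\epsilon}_k)^2\le4\tau\le2L_fD^2$, while the previous step gives $2\bar{\xi}_k\le2L_fD^2$ with room to spare. Substituting into \eqref{eqn:ak} yields $a_k\le4L_fD^2$, and since $v_k=a_k/t_k^2$ and $t_k\ge\frac{k+1}{2}$, I obtain $v_k\le\frac{4L_fD^2}{t_k^2}\le\frac{16L_fD^2}{(k+1)^2}$, which is exactly the claimed rate. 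The only genuinely subtle step is the circular induction of the third paragraph; the remaining estimates are the routine series summations hidden in the tolerances $\epsilon_k,\gamma_k$.
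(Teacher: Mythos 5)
Your proposal is correct in its architecture and follows the same skeleton as the paper's proof: feed Lemma \ref{lemma:xik} into Lemma \ref{lemma:ak}, break the circular dependence between $\omega_k$ and $\bar{\xi}_k$ by induction on a dimension-free constant $\varpi$, and assemble via $v_k=a_k/t_k^2$ with $t_k\geq(k+1)/2$; your final accounting $(\sqrt{\tau}+\bar{\epsilon}_k)^2+2\bar{\xi}_k\leq 4L_fD^2$ is exactly the paper's. The one step where you genuinely diverge is the bound on $\omega_k$. The paper works with the unweighted one-step recursion $t_k\omega_k-(t_k-1)\omega_{k-1}\leq\sqrt{2/L_f}\,(\sqrt{\tau}+\sqrt{s_k})$, drops the weights (using only $t_k\geq 1$), and then has to solve an implicit inequality $\omega_k\leq\omega_{k-1}+C_0+\sqrt{C_1}(C_2/C_1+\omega_k)^{1/2}$ at every step, which only yields the linearly growing bound $\omega_k\leq k\varpi$; this is still enough because the $1/k^3$ decay built into $\gamma_k$ converts $k\varpi$ into $\xi_k\leq L_fD^2/(4k^2)$, which is summable. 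You instead keep the weights, use $t_k(t_k-1)=t_{k-1}^2$ to telescope $t_k^2\omega_k\leq\omega_1+\sum_{j\leq k}t_j\nrm{u_j}$, and divide by $t_k^2\gtrsim k^2$ to get a \emph{uniform} bound $\omega_k=O(D+\omega_1)$, with the self-referential $\xi_k$-term inside $\nrm{u_k}$ absorbed exactly as you describe. This is a cleaner and strictly stronger intermediate estimate, and your treatment of the absorption is sound. The one caveat: the constant $\varpi$ in \eqref{def:varpi} is calibrated to the paper's recursion (it is the larger of $\bar\omega_1$ and the fixed point of $x=D_0+\sqrt{D_1x}$), and it is not immediate that this same numerical $\varpi$ dominates the uniform bound $\omega_1+O(D)$ that your telescoping produces; since $\gamma_k$ in the theorem statement is tied to this specific $\varpi$, closing your induction with the stated tolerances requires either checking that your uniform bound sits below $\varpi$ (roughly, that $\omega_1+8D\lesssim \varpi+D$, which is borderline with the paper's constants) or re-deriving your own constant and correspondingly tightening $\gamma_k$. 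This is constant-chasing rather than a gap in the idea, but it is the one place your sketch would need real work to be welded onto the theorem exactly as stated.
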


\begin{proof}
We can check that the assumptions in Lemma \ref{lemma:ak} are satisfied: $\gamma_k\leq1$, $\|\delta_k\|=\frac{1}{t_k}\sqrt{\frac {L_f}2}\epsilon_k\leq 1$ and $t_{k+1}^2\gamma_{k+1}\leq t_k^2\gamma_k$ for any $k\geq 1$.
We also have $\bar\epsilon_k=\sum_{i=1}^k\epsilon_i\leq \sqrt{\frac {L_f}2}D$ and $\sum_{i=1}^k\epsilon_i^2\leq \frac{L_fD^2}{4}$.

First, we discuss the upper bound of $\omega_k$. By definition, 
\[
    \|u_k\|=\|t_k(x_k-x^\star)-(t_k-1)(x_{k-1}-x^\star)\|\geq t_k\omega_k-(t_k-1)\omega_{k-1},
\]
where the last inequality holds because $t_k\geq 1$.
By \eqref{eqn:repeat}, it holds $\sqrt{b_k}=\sqrt{\frac {L_f}2}\|u_k\|\leq\sqrt{\tau}+\sqrt{s_k}$.
It follows
\[
    t_k\omega_k-(t_k-1)\omega_{k-1}\leq\sqrt{\frac 2{L_f}}(\sqrt{\tau}+\sqrt{s_k}),
\]
which implies
\[
    \omega_k\leq\omega_{k-1}+\sqrt{\frac 2{L_f}}(\sqrt{\tau}+\sqrt{s_k}).
\]
Substituting \eqref{eqn:sqrtsk} into the above inequality and combining \eqref{eqn:xik}, we get
\begin{align*}
    \omega_k & \leq \omega_{k-1}+\sqrt{\frac{2\tau}{L_f}}+\sqrt{\frac1{2{L_f}}}\left(\bar{\epsilon}_k+\left(\bar{\epsilon}_k^2+4 \bar{\xi}_{k-1}+4\xi_k+4\epsilon_k^2+4 \bar{\epsilon}_k \sqrt{\tau}\right)^{1 / 2}\right)\\
    &\leq \omega_{k-1}+\sqrt{\frac{2\tau}{L_f}}+\sqrt{\frac1{2{L_f}}}\bar{\epsilon}_k+\sqrt{\frac1{2{L_f}}}\left(\bar{\epsilon}_k^2+4 \bar{\xi}_{k-1} \right.\\
    &\left.\qquad\qquad+\frac{8L_f\LA t_{k-1}^2\gamma_{k-1}}{\sigmin^2}\omega_{k}+\frac{8(4L_f+\LA )t_{k-1}^2\gamma_{k-1}}{\sigmin^2}+4\epsilon_k^2+4 \bar{\epsilon}_k \sqrt{\tau}\right)^{1 / 2}.
\end{align*}
For the simplicity of notation, we denote $C_0 \defeq \sqrt{\frac{2\tau}{L_f}}+\sqrt{\frac1{2{L_f}}}\bar{\epsilon}_k$,
$C_1 \defeq \frac{4\LA t_{k-1}^2\gamma_{k-1}}{\sigmin^2}$, \\
$C_2 \defeq \frac{1}{2L_f}\left(\bar{\epsilon}_k^2+4 \bar{\xi}_{k-1} +\frac{8(4L_f+\LA )t_{k-1}^2\gamma_{k-1}}{\sigmin^2}+4\epsilon_k^2+4 \bar{\epsilon}_k \sqrt{\tau}\right)$,
and the above inequality can be rewritten into
\[
    \omega_k\leq\omega_{k-1}+C_0+\sqrt{C_1}\left(\frac{C_2}{C_1}+\omega_k\right)^{1 / 2},
\]
which implies
\[
    \left(\frac{C_2}{C_1}+\omega_k\right)^{1 / 2}\leq \frac12\left(\sqrt{C_1}+\sqrt{C_1+4\omega_{k-1}+4C_0+4C_2/C_1}\right).
\]
By simple derivations, we have
\begin{equation}\label{eqn:omegak}
    \begin{split}
    \omega_k & \leq \frac{C_1}{2}+\omega_{k-1}+C_0+\frac12\sqrt{C_1^2+4C_1\omega_{k-1}+4C_0C_1+4C_2}\\
    & \leq \omega_{k-1}+C_1+C_0+\sqrt{C_0C_1}+\sqrt{C_1\omega_{k-1}}+\sqrt{C_2}.
    \end{split}
\end{equation}
On the one hand, we observe that the upper bound of $\omega_k$ depends on $C_0$ and $C_2$, which further depend on $\bar\xi_{k-1}$.
On the other hand, from \eqref{eqn:xik}, we know $\xi_k$ can be bounded by $\omega_k$.
Observing the relationship, we can prove the upper bound of both $\omega_k$ and $\xi_k$ by induction.
It is easy to derive that 
$C_0 \leq \bar C_0\defeq\frac{3D}{2}$,
$C_1 \leq \bar C_1\defeq\frac{4\LA }{\sigmin^2}$ and
$C_2 \leq \frac{1}{2L_f}\left(3L_fD^2+4 \bar{\xi}_{k-1} +\frac{8(4L_f+\LA )}{\sigmin^2}\right)$.
Let 
$\bar C_2\defeq \frac{1}{2L_f}\left(6 L_fD^2 +\frac{8(4L_f+\LA )}{\sigmin^2}\right)$,
$D_0=\bar C_0+ \bar C_1+\sqrt{\bar C_2}+\sqrt{\bar C_0 \bar C_1}, D_1=\frac{4\LA }{\sigmin^2}$. Let $\bar\omega_1$ be the maximum zero point of \eqref{omega1eqn}
and 
\begin{equation}\label{def:varpi}
    \varpi=\max\left\{\frac{\left(\sqrt{D_1}+\sqrt{D_1+4D_0}\right)^2}4, \bar\omega_1\right\}.
\end{equation}
Note that $\bar C_0, \bar C_1, \bar C_2, D_0, D_1, \bar \omega_1$ and $\varpi$ only depend on the constants $\LA , \sigmin, L_f$ and $D$.
We aim to prove $\omega_k\leq k\varpi $ and $\xi_k\leq \frac{L_fD^2}{4k^2}$.

First, we prove the case when $k=1$.
Note that $\|u_1\| = \|x_1-x^\star\|=\omega_1$.
By \eqref{eqn:repeat}, we have
\[
    \omega_1\leq\sqrt{\frac 2{L_f}}(\sqrt{\tau}+\sqrt{s_1})
\]
Combining with \eqref{eqn:s1} yields
\begin{equation}\label{eqn:omega1}
    \omega_1\leq\sqrt{\frac 2{L_f}}\left(\sqrt{\tau}+\left(\epsilon_1^2+\xi_1+\epsilon_1\left(\sqrt{\tau}+\sqrt{\xi_1}\right)\right)^{1/2}\right).
\end{equation}
Putting \eqref{eqn:omega1} and \eqref{eqn:xi1} together and combining $\epsilon_1\leq\sqrt{\frac2{L_f}}$ yield a quartic inequality equation with respect to $\omega_1$:
\begin{equation}\label{omega1eqn}
    \begin{split}
        \omega_1\leq\sqrt{\frac 2{L_f}} \left[\sqrt{\tau}+\left(\frac2{L_f}+\frac{1}{\sigmin^2}(L_f\LA \omega_1+L_f(\LA D+1)+\LA )\right.\right.\\
        \left.\left. +\sqrt{\frac2{L_f}}\left(\sqrt{\tau}+\sqrt{\frac{1}{\sigmin^2}(L_f\LA \omega_1+L_f(\LA D+1)+\LA )}\right)\right)^{1/2}\right].
    \end{split}
\end{equation}
Recall the definition of $\bar \omega_1$ and $\varpi$. We get $\omega_1\leq\bar\omega_1\leq\varpi$.
Thus, it holds
\[
    \xi_{1}\leq \frac1{\underline{\sigma}^2_{\min}}(L_f\LA \varpi+L_f(\LA D+1)+\LA ) t_1^2\gamma_1\leq \frac{L_fD^2}{4},
\]
where the last inequality holds because $\gamma_{k}\leq \frac{\sigmin^2L_fD^2}{8(L_f\LA (\varpi+D)+4L_f +\LA )}\frac{1}{t_{k}^2(k+1)^3}$.

Suppose that there exists $k\geq2$ such  $\omega_i\leq i\varpi $ and $\xi_i\leq \frac{L_fD^2}{4i^2}$ for any $1\leq i\leq k-1$.
Then we have $\sum_{i=1}^{k-1}\xi_i\leq\frac{L_fD^2}{2}$ and $\bar{\xi}_{k-1}=\sum_{i=1}^{k-1}\xi_i+\sum_{i=1}^{k-1}\epsilon^2\leq \frac{3L_fD^2}4$.
Hence $C_2 \leq \bar C_2$ and \eqref{eqn:omegak} implies
\begin{align*}
\omega_k &\leq \omega_{k-1}+\bar C_1+\bar C_0+\sqrt{\bar C_0\bar C_1}+\sqrt{C_1\omega_{k-1}}+\sqrt{\bar C_2}\\
&\leq (k-1)\varpi+\bar C_1+\bar C_0+\sqrt{\bar C_0\bar C_1}+\sqrt{(k-1)C_1\varpi}+\sqrt{\bar C_2}\\
&\leq (k-1)\varpi+D_0+\sqrt{ D_1\varpi}\\
&\leq k\varpi,
\end{align*}
where the third inequality is due to $C_1\omega_{k-1}\leq \frac{4\LA }{\sigmin^2}t_{k-1}^2\gamma_{k-1}\cdot(k-1)\varpi\leq D_1\varpi$
and the last inequality holds because $\varpi$ is greater than the maximum zero point of the equation $D_0+\sqrt{D_1x}= x$.
Then we can obtain $\xi_k\leq \frac{L_fD^2}{4k^2}$ due to \eqref{eqn:xik} and $\gamma_{k}\leq \frac{\sigmin^2L_fD^2}{8(L_f\LA (\varpi+D)+4L_f +\LA )}\frac{1}{t_{k}^2(k+1)^3}$.
Consequently, by induction we conclude that $\xi_k\leq \frac{L_fD^2}{4k^2}$ for any $k\geq 1$. It follows $\bar{\xi}_k\leq L_fD^2$.
Combining the fact that $t_k \geq(k+1)/2$ and inequality \eqref{eqn:ak} yields
\[
    f(x_k)-f(x^\star)+\langle \lambda^\star, Ax_k-b\rangle \leq \frac{\left(\sqrt{\frac {L_f}2}D+\sqrt{\frac {L_f}2}D\right)^2+2L_fD^2}{(k+1)^2/4}\leq\frac{16L_fD^2}{(k+1)^2},
\]
which completes the proof. \qed
\end{proof}

\begin{corollary}
    Under the same assumptions and same choices of the parameters $\epsilon_k$ and $\gamma_k$ in Theorem \ref{thm:convex}, in order to find an approximate solution $x_k$ satisfying $v_k=f(x_k)-f(x^\star)+\langle \lambda^\star, Ax_k-b\rangle\leq \epsilon$,
    the total number of gradient evaluations for Algorithm \ref{APPA-convex} is bounded by
    \[
        \tilde {O}\left(\kappa_A\sqrt{\frac{L_fD^2}{\epsilon}}\right).
    \]
\end{corollary}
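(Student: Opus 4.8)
The plan is to bound the total work as (number of outer iterations of Algorithm \ref{APPA-convex} to reach accuracy $\epsilon$) $\times$ (gradient evaluations per inner call of Algorithm \ref{algo:SC-APPA}). First I would count the outer iterations. Theorem \ref{thm:convex} gives $v_k\leq \frac{16L_fD^2}{(k+1)^2}$, so to guarantee $v_k\leq\epsilon$ it suffices to take $k+1\geq 4\sqrt{L_fD^2/\epsilon}$; hence the number of outer iterations is $K=O\!\left(\sqrt{L_fD^2/\epsilon}\right)$.

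Next I would bound the cost of a single inner call. The subproblem \eqref{proximal-point} has smooth part $f(x)+\frac{L_f}{2}\|x-y_k\|^2$, which is $2L_f$-smooth and $L_f$-strongly convex, so its condition number is $O(1)$. By the refined version of Corollary \ref{coro:strongly-convex} for linear equality constraints established in Section \ref{sec:sigmin}, producing an iterate within distance $\delta$ of the exact primal–dual subproblem solution $(\hat x,\hat\lambda)$ (the minimizer of \eqref{proximal-point} at the current $y_k$, with its multiplier) costs $\tilde O\!\left(\underline\kappa_A\log(1/\delta)\right)$ gradient evaluations, where $\underline\kappa_A=\LA/\sigmin$. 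It then remains to check that the required $\delta$ is only polynomially small in $k$, so that the $\log(1/\delta)$ factor is absorbed into $\tilde O$. To this end I would translate the two outer stopping criteria into distance bounds. Since $A\hat x=b$, the constraint residual satisfies $\|Ax_k-b\|=\|A(x_k-\hat x)\|\leq \LA\|x_k-\hat x\|$; and using the subproblem KKT identity $\nabla f(\hat x)+L_f(\hat x-y_k)+A^\T\hat\lambda=0$, the Lagrangian gradient satisfies $\|\delta_k\|\leq 2L_f\|x_k-\hat x\|+\LA\|\lambda_k-\hat\lambda\|$. Because Proposition \ref{prop:sc} gives linear convergence of both $\|x_k-\hat x\|$ and $\|\lambda_k-\hat\lambda\|$, driving both below $\delta\sim\min\{\gamma_k/\LA,\ \sqrt{L_f}\,\epsilon_k/t_k\}$ meets both criteria simultaneously. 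By the parameter choices in Theorem \ref{thm:convex}, $\epsilon_k=\Theta(1/k^2)$ and $\gamma_k=\Theta(1/k^5)$ (up to constants depending only on $\LA,\sigmin,L_f,D$), so $\log(1/\delta)=O(\log k)$ and each inner call costs $\tilde O(\underline\kappa_A)$.

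Finally I would multiply: summing $\tilde O(\underline\kappa_A)$ over the $K=O\!\left(\sqrt{L_fD^2/\epsilon}\right)$ outer iterations gives the claimed total of $\tilde O\!\left(\kappa_A\sqrt{L_fD^2/\epsilon}\right)$, reading $\kappa_A$ as $\underline\kappa_A$ in the non-full-rank setting of this section (the two coincide when $A$ has full row rank). The main obstacle is the middle step: carefully verifying that the two composite outer stopping criteria (on $\|\delta_k\|$ and on $\|Ax_k-b\|$) are both implied by a single distance-type accuracy $\delta$ of the inner solver, and confirming that this $\delta$ decays only polynomially in $k$ so the logarithmic overhead stays $O(\log k)$ and is swallowed by $\tilde O$. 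The remaining steps are routine arithmetic.
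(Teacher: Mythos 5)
Your proposal is correct and follows essentially the same route as the paper's proof: count the $O\bigl(\sqrt{L_fD^2/\epsilon}\bigr)$ outer iterations from Theorem \ref{thm:convex}, translate the two stopping criteria on $\|\delta_k\|$ and $\|Ax_k-b\|$ into a single primal--dual distance tolerance via the subproblem KKT conditions, and invoke the $O(1)$ condition number of the subproblem together with Corollary \ref{coro:strongly-convex} to get $\tilde O(\kappa_A)$ per inner call. Your version is in fact slightly more explicit than the paper's in spelling out that the required inner tolerance decays only polynomially in $k$, so the logarithmic overhead is genuinely absorbed into the $\tilde O$.
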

\begin{proof}
    In Theorem \ref{thm:convex}, we have proved the outer loop complexity is $O\left(\sqrt{\frac{L_fD^2}{\epsilon}}\right)$.
    Let $x_k^\star=\argmin_{Ax=b} \left\{f(x)+\frac {L_f}2\|x-y_k\|^2\right\}$.
    The KKT conditions are $\nabla f(x_k^\star)+L_f(x_k^\star-y_k)+A^\T\lambda_k^\star=0$ and $Ax_k^\star-b = 0$.
    By the definition of $\delta_k$ and $\zeta_k$, we have
    \begin{align*}
        \delta_k &= \nabla f(x_k)-\nabla f(x_k^\star)+L_f(x_k-x_k^\star)+A^\T(\lambda_k-\lambda_k^\star),\\
        \zeta_k & = A(x_k-x_k^\star).
    \end{align*}
    It follows $\|\delta_k\| \leq 2L_f\|x_k-x_k^\star\|+\LA \|\lambda_k-\lambda_k^\star\|$ and
    $\|\zeta_k\|  \leq \LA \|x_k-x_k^\star\|$.
    Let $\tilde\epsilon_k =\frac{1}{\LA +2L_f}\cdot\min\left\{\sqrt{\frac {L_f}2}\cdot\frac{\epsilon_k}{t_k}, \gamma_k\right\}$,
    then the subroutine only needs to output a pair $(x_k,\lambda_k)$ such that $\|x_k-x_k^\star\|\leq \tilde\epsilon_k$ and $\|\lambda_k-\lambda_k^\star\|\leq \tilde \epsilon_k$,
    the required subproblem error tolerance can be satisfied.
    The condition number of the objective function of the subproblem \eqref{proximal-point} is $O(1)$.
    Since $\tilde\epsilon_k$ is the power function of $\epsilon$ and other parameters, we obtain the number of gradient evaluations in each inner iteration is $\tilde{O}(\kappa_A)$ by Corollary \ref{coro:strongly-convex}.
    Consequently, the overall complexity is $\tilde {O}\left(\kappa_A\sqrt{\frac{L_fD^2}{\epsilon}}\right)$. \qed
\end{proof}
\begin{remark}
Note that Algorithm \ref{APPA-convex} outputs an approximate solution of the last subproblem, which satisfies $\|Ax_T-b\|\leq\gamma_T$. If we further require that $\gamma_T\leq\min\left\{\frac{\epsilon}{2\|\lambda^\star\|},\epsilon\right\}$, the overall complexity remains unchanged up to logarithmic factors since the inner loop converges linearly. Therefore, in order to obtain an approximate solution $x_T$ satisfying $f(x_T)-f(x^\star)+\langle \lambda^\star, Ax_T-b\rangle\leq \frac\epsilon2$ and $\|Ax_T-b\|\leq\gamma_T$, the complexity upper bound is still $\tilde {O}\left(\kappa_A\sqrt{\frac{L_fD^2}{\epsilon}}\right)$. In this case, we can conclude that  $f(x_T)-f(x^\star)\leq \epsilon$ because $\gamma_T\leq \frac{\epsilon}{2\|\lambda^\star\|}$. Therefore, the above result actually implies an complexity upper bound of $\tilde {O}\left(\kappa_A\sqrt{\frac{L_fD^2}{\epsilon}}\right)$ in order to find an approximate solution $x_T$ satisfying $f(x_T)-f(x^\star)\leq\epsilon$ and $\|Ax_T-b\|\leq\epsilon$.
\end{remark}

\section{Conclusions}
In this work, we analyze the lower and upper bounds of composite optimization problems in strongly convex, convex, and non-convex scenarios. Different from most of previous studies, we specifically consider problem classes with a given condition number $\kappa_A$. Our results demonstrate that the complexities presented are optimal up to logarithmic factors. This study marks the first instance of optimal algorithms for convex and non-convex cases, as well as the first set of lower bounds for all three cases. In the future work, it remains interesting to investigate how algorithms can be designed to further tighten the logarithmic factors in the complexity upper bounds.

\vspace{0.3cm}
{\small 
\noindent\textbf{Funding} This work was supported by  the National Natural Science Foundation of China under grant number 11831002.\\

\noindent\textbf{Data Availability} \\

\noindent\textbf{Declarations}\\

\noindent\textbf{Conflict of interest} The authors have no relevant financial interest to disclose.
}

\bibliographystyle{spmpsci}
\bibliography{references}

\end{document}